\documentclass{siamart190516}
\usepackage{amsmath,amssymb} 
\usepackage[utf8]{inputenc}
\usepackage{hyperref}
\usepackage{graphicx}
\usepackage{algorithm,algorithmic,comment}
\usepackage{mystylefile}
\newtheorem{remark}{Remark}[section]

\title{Monte Carlo Methods for Estimating the Diagonal of a Real Symmetric Matrix}
\author{Eric Hallman\thanks{Department of Mathematics, North Carolina State University, Raleigh, NC, 27695 ({erhallma@ncsu.edu}, ipsen@ncsu.edu, asaibab@ncsu.edu)} \and Ilse C.F.\ Ipsen\footnotemark[1] \and Arvind K.\ Saibaba\footnotemark[1]}
\newcommand{\V}[1]{\boldsymbol{#1}}
\newcommand{\M}[1]{\boldsymbol{#1}}
\newcommand{\B}[1]{\boldsymbol{#1}}
\newcommand{\R}{\mathbb{R}}
\newcommand{\D}[1]{\mathcal{D}\,(#1)}
\newcommand{\mb}[1]{\mathbb{#1}}
\DeclareMathOperator{\diag}{\mathrm{diag}}
\DeclareMathOperator{\intdim}{\mathrm{intdim}}
\DeclareMathOperator{\Var}{\mathrm{Var}}
\DeclareMathOperator{\rank}{\mathrm{rank}}
\DeclareMathOperator{\trace}{\mathrm{trace}}
\newcommand{\expect}{\mathbb{E}}
\newcommand{\prob}{\mathbb{P}}

\usepackage{xcolor}

\newcommand{\AKS}[1]{\textcolor{red}{AKS: #1}}

\newcommand{\matc}[1]{\boldsymbol{\mathcal{#1}}}

\begin{document}
\maketitle
\begin{abstract}
For real symmetric matrices that are accessible only through matrix vector products, we present Monte Carlo estimators for computing the  diagonal elements. Our probabilistic bounds for normwise absolute and relative errors apply to Monte Carlo estimators based on random  Rademacher, sparse Rademacher, normalized and unnormalized Gaussian 
vectors, and to vectors with bounded fourth moments. 
The novel use of matrix concentration inequalities in our proofs
represents a systematic model for future analyses.
Our bounds mostly do not depend on the matrix dimension, 
target different error measures than existing work, and
imply that the accuracy of the estimators 
increases with the  diagonal dominance of the matrix. An application to derivative-based global sensitivity metrics corroborates this, as do
numerical experiments on synthetic test matrices.
We recommend against the use in practice of sparse Rademacher 
vectors, which are the basis for many 
randomized sketching and sampling algorithms, because 
they tend to deliver barely a digit of accuracy even under large 
sampling amounts.

\end{abstract}

\begin{keywords}
Concentration inequalities, Monte Carlo Methods, Relative error, Rademacher random vectors, Gaussian random vectors
\end{keywords}

\begin{AM}
15A15, 65C05, 65F50, 60G50, 68W20
\end{AM}

\section{Introduction}
We compute the diagonal elements of symmetric matrices $\M{A} \in \R^{n \times n}$
with Monte Carlo estimators of the form 
\begin{equation*}
\widehat{\M{A}}=\frac{1}{N}\sum_{k=1}^N{\M{A}\V{z}_k\V{z}_k^\top}
\end{equation*}
where $\V{z}_k$ are independent random vectors. This approach is crucial
when the elements of $\M{A}$ are available only
implicitly, via matrix vector products.

Estimating the diagonal elements of a matrix is important in many areas of science and engineering:
In electronic structure calculations, one computes the diagonal
elements of a projector onto the smallest eigenvectors of a Hamiltonian matrix~\cite{bekas2007estimator}. 
In statistics, leverage scores for column subset selection can be computed from the diagonals of the projector onto the column space. In Bayesian inverse problems, the diagonal elements of the posterior covariance are computed with matrix-free estimators. Diagonal, or Jacobi preconditioners can accelerate the convergence of iterative linear solvers~\cite{wathen2015preconditioning}. More recently, diagonal estimators have been used to accelerate second order optimization techniques for machine learning~\cite{yao2020adahessian}. In network science, subgraph centrality measures
and ranks the importance of the network nodes based on the diagonal of a scaled exponential of the adjacency matrix. In sensitivity analysis, Monte Carlo diagonal estimators  efficiently compute the derivative-based global sensitivity metrics~\cite{constantine2017global,kucherenko2017}.

Diagonal estimation is related to trace estimation. Once the diagonal elements are known, the trace can be computed from their sum. Therefore, estimators for the diagonal
of a matrix can be easily adapted to  trace estimators. Monte Carlo methods were first proposed by Hutchinson~\cite{hutchinson1989stochastic}, and subsequently improved and expanded to different distributions \cite{avron2011randomized,cortinovis2021randomized,roosta2015improved}. Applications of trace estimators, reviewed  in~\cite{ubaru2018applications}, include estimating density of states, log determinants, and Schatten $p$-norms.

\paragraph{Literature review}
 To our knowledge, Monte Carlo diagonal estimators were first proposed by Bekas, Kokiopoulou, and Saad~\cite{bekas2007estimator}, and a sufficient condition was given for a Monte Carlo estimator to be unbiased. However, this paper identified that large offdiagonal entries can result in large relative errors and developed probing methods to mitigate the effects of the offdiagonal entries. This idea is further explored in the following works~\cite{laeuchli2016methods,kaperick2019diagonal}.

 We are aware of a recent paper~\cite{BN22} as the only other work to analyze the number of samples required for a relative $(\epsilon,\delta)$ estimator. 
 In contrast to \cite{BN22},  our proofs are the first to exploit matrix concentration inequalities to impose a systematic structure that can serve as a model for future analyses and allow us to analyze the normwise errors in a different norm. We analyze more general distributions such as random vectors with bounded fourth moments and sparse Rademacher vectors with a user-specified sparsity parameter, and---in contrast to \cite{BN22}---focus on un-normalized estimators. Most of our bounds do not show an explicit dependence on the matrix dimension which is desirable for large-scale problems.

\subsection{Contributions and overview}
After introducing notation, relevant concentration inequalities, and the setup
for our analysis (section~\ref{s_not}),
we derive normwise error bounds for Monte Carlo estimators based
on independent Rademacher vectors
(section~\ref{s_rademacher}), random vectors with bounded fourth moments and Gaussian vectors (section~\ref{s_gauss}); componentwise bounds
for Rademacher and Gaussian  vectors (section~\ref{s_comp});
and apply Monte Carlo estimators to
derivative-based global sensitivity metrics (section~\ref{s_appl}).
Numerical experiments (section~\ref{s_num}) 
illustrate the accuracy of the Monte Carlo estimators and the bounds. The novel and noteworthy features of contributions are:

\begin{enumerate}
\item Most of our bounds do not depend on the matrix dimension $n$, and hold for all
 symmetric matrices, whether positive definite or not.
 \item We extend the concept of relative $(\epsilon,\delta)$ estimators to diagonal
 estimation to determine the minimal number of samples $N$ for a user-specified
choice of relative error $\epsilon$ and failure probability $\delta$
 (Definitions~\ref{def:epsdelest}, \ref{def:compdelest}).
 
\item Our normwise bounds suggest that for Rademacher vectors, the Monte Carlo 
estimators are more accurate for matrices that are more strongly diagonally
dominant (Theorem \ref{thm:rademacher}).
In particular, the least number of samples required for the
Monte Carlo estimators to achieve a 
user-specified relative error decreases with increasing diagonal dominance of $\M{A}$ in the relative  sense (Corollaries \ref{c:rademacher}, \ref{c_fourth}).
\item For Rademacher vectors parameterized in terms of sparsity levels
(Definition~\ref{d_sparserad}),
we show that the Monte Carlo estimators lose accuracy with increasing sparsity (Theorem~\ref{thm:rademacher3}, Corollary~\ref{c:rademacher3}).
Numerical experiments (section~\ref{s_num}) confirm that,
even for large sampling amounts, the estimators barely achieve
a single digit of accuracy. Therefore we recommend against their use
in practice.
\item Our componentwise bounds suggest that 
the accuracy for computing a diagonal element $a_{ii}$ depends only on the diagonal
dominance of column/row $i$ of $\M{A}$
(Corollaries~\ref{c:radcomp}, \ref{c:gauss:component}).
\item In the context of derivative-based global sensitivity metrics, we
design and analyze Monte Carlo estimators
based on random vectors from a
problem-specific probability distribution (Theorem~\ref{t_66}, Corollary~\ref{c_66}).
\end{enumerate}

\section{Background}\label{s_back}
After reviewing notation (section~\ref{s_not}) and
relevant concentration inequalities (section~\ref{s_rmt}), we present
the setup for our analysis (section~\ref{sec:analysis}).

\subsection{Notation}\label{s_not}
The \textit{Schur product} (or 
\textit{Hadamard}, or 
\textit{elementwise product})
of $\M{A},\M{B}\in \R^{m\times n}$ is denoted by 
$\M{C} = \M{A}\circ \M{B} \in \R^{m\times n}$ and has elements
\[c_{ij} = a_{ij}b_{ij} \qquad 1\leq 1 \leq m, \quad 1\leq j\leq n. \]
For $\M{A},\M{B},\M{C}\in\rmn$, the Schur product is commutative and distributive, 
\begin{align*}
\M{A}\circ \M{B} = \M{B}\circ \M{A}, \qquad 
\M{A}\circ(\M{B}+\M{C}) = \M{A}\circ\M{B} + \M{A}\circ\M{C}.
\end{align*}
Following MATLAB convention, we define 
$\diag(\M{A})=
\begin{pmatrix}a_{11} & \cdots & a_{nn}\end{pmatrix}^\top\in\mb{R}^n$
as the column vector of diagonal elements of $\M{A}\in\rnn$.
The operator $\diag$ is overloaded,
and $\diag(\V{x})\in\real^{n\times n}$ represents a 
diagonal matrix whose
diagonal elements are the elements of the vector $\V{x}\in\real^n$. 
In particular, 
\begin{align}\label{e_diag}
\D{\M{A}} \equiv \diag(\diag(\M{A}))=
\M{I} \circ \M{A}\ \in\real^{n\times n}
\end{align}
represents the diagonal matrix whose diagonal 
elements are the diagonal elements of $\M{A}$.
In other words, $\M{I} \circ \M{A}$ zeros out the offdiagonal
elements of $\M{A}$.

If the first factor in a Schur product is a square matrix
$\M{M}\in\R^{n\times n}$,  and the second factor
an outer product involving $\V{x}, \V{y} \in \R^n$, then
\begin{align}\label{e_outer}
\M{M}\circ (\V{xy}^\top) = \diag(\V{x})\,\M{M}\,\diag(\V{y}).
\end{align}
For symmetric matrices $\M{A}, \M{B} \in \R^{n\times n}$,
the partial order $\M{A} \preceq \M{B}$,
or equivalently $\M{B}\succeq \M{A}$,
says that $\M{B}-\M{A}$ is positive semidefinite.
If $\M{A}$ and $\M{B}$ are positive semidefinite, then $\M{A} \preceq \M{B}$ implies $\M{A}^{1/2} \preceq \M{B}^{1/2}$. 

The \textit{intrinsic dimension} of a nonzero symmetric positive semidefinite matrix 
$\M{A} \in \R^{n\times n}$ is 
\[ \intdim(\M{A}) \equiv \frac{\trace(\M{A})}{\|\M{A}\|_2},
\qquad \text{with}\quad 1\leq \intdim(\M{A}) \leq \rank(\M{A}) \leq n.\] 
If, additionally, $\M{A}$ is a diagonal matrix, then 
\[ \intdim(\M{A}) = \frac{\sum_{i=1}^n a_{ii}}{\max_{1 \leq i \leq n} a_{ii}}.\]
 The columns of 
$\M{A}=\begin{bmatrix}\V{a}_1 & \cdots & \V{a}_n\end{bmatrix}\in\real^{m\times n}$ are $\V{a}_j\in\real^m$, $1\leq j\leq n$, and the columns of the identity 
$\M{I}=\begin{bmatrix} \V{e}_1 & \cdots & \V{e}_n\end{bmatrix}\in\real^{n\times n}$ are $\V{e}_j\in\real^n$.
The transpose of $\M{A}$ is $\M{A}^{\top}$.

\subsection{Concentration inequalities}\label{s_rmt}
We rely on two scalar and two matrix concentration inequalities.

Markov's inequality \cite[Section 3.1]{Mitz2005} bounds the
the probability that a random variable exceeds a constant.

\begin{theorem}[Markov's inequality]\label{t_Markov}
If $Z$ is a non-negative random variable, then for $t >0$ 
\begin{equation*}
\prob[Z \geq t] \leq \frac{\expect[Z^2]}{t^2}.
\end{equation*}
\end{theorem}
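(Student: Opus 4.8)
The plan is to recognize this variant of Markov's inequality as the classical first-moment inequality applied to the squared variable, and to reduce to it in essentially one line. First I would record the elementary but essential observation that, because $Z \geq 0$ and $t > 0$, the map $z \mapsto z^2$ is strictly increasing on $[0,\infty)$, so the two events coincide exactly:
\[
\{Z \geq t\} = \{Z^2 \geq t^2\}.
\]
The non-negativity hypothesis is precisely what upgrades this from an inclusion to an equality; for a signed variable the event $\{Z^2 \geq t^2\}$ would also capture $\{Z \leq -t\}$, and the identity would fail.

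Next I would invoke the standard form of Markov's inequality, $\prob[Y \geq s] \leq \expect[Y]/s$ for any non-negative random variable $Y$ and any $s > 0$, with the choices $Y = Z^2$ and $s = t^2$. Since $Z^2 \geq 0$ and $t^2 > 0$, this application is legitimate and produces $\prob[Z^2 \geq t^2] \leq \expect[Z^2]/t^2$. Combining this with the event identity from the previous step yields the stated bound immediately.

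If a self-contained argument is preferred to citing the classical inequality, I would instead bound the second moment directly from the definition of expectation. Writing $\mathbf{1}$ for the indicator of the event $\{Z \geq t\}$, one has the pointwise inequalities $Z^2 \geq Z^2\,\mathbf{1} \geq t^2\,\mathbf{1}$, and taking expectations gives $\expect[Z^2] \geq t^2\,\expect[\mathbf{1}] = t^2\,\prob[Z \geq t]$; dividing through by $t^2$ completes the proof. There is no genuine obstacle here — the only point requiring care is the explicit use of the hypothesis $Z \geq 0$ to pass between the thresholds $Z \geq t$ and $Z^2 \geq t^2$.
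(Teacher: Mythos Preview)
Your argument is correct and entirely standard. The paper itself does not supply a proof of this statement; it simply records Markov's inequality as a background fact with a citation to \cite[Section 3.1]{Mitz2005}, so there is nothing to compare against. Your reduction $\{Z \geq t\} = \{Z^2 \geq t^2\}$ followed by the first-moment inequality applied to $Y = Z^2$ is exactly the right derivation, and the self-contained indicator bound you offer as an alternative is equally clean.
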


Hoeffding's inequality for general bounded random variables 
\cite[Theorem 2.2.6]{vershynin2018high} bounds the
probability that a sum of scalar random variables exceeds its mean.

\begin{theorem}[Scalar Hoeffding inequality]\label{t_Hoeffding}
Let $Z_1, \ldots, Z_N$ be independent random variables, bounded by 
$m_k\leq Z_k\leq M_k$, $1\leq k\leq N$, with 
sum $Z\equiv \sum_{k=1}^N{Z_k}$. Then for $t>0$
\[ \myP\left[|Z-\E[Z]|\geq t \right] \leq 2
\exp\left(\frac{-2t^2}{\sum_{k=1}^N{(M_k-m_k)^2}} \right). \]
\end{theorem}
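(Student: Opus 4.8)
The plan is to use the standard exponential moment (Chernoff) method: bound the tail of $Z-\expect[Z]$ by a product of moment generating functions, control each factor with Hoeffding's lemma, and then optimize over a free parameter. First I would fix $s>0$ and rewrite the upper tail as $\prob[Z-\expect[Z]\geq t]=\prob[e^{s(Z-\expect[Z])}\geq e^{st}]$, then apply Markov's inequality (Theorem~\ref{t_Markov}) to the nonnegative variable $e^{s(Z-\expect[Z])}$; after absorbing the resulting constant into a relabeling of $s$, this yields $\prob[Z-\expect[Z]\geq t]\leq e^{-st}\,\expect[e^{s(Z-\expect[Z])}]$. Since the $Z_k$ are independent, the expectation factors as $\expect[e^{s(Z-\expect[Z])}]=\prod_{k=1}^N\expect[e^{s(Z_k-\expect[Z_k])}]$, reducing the problem to a bound on each centered factor.

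The key technical ingredient, and the main obstacle, is Hoeffding's lemma: for a random variable $X$ with $\expect[X]=0$ and $m\leq X\leq M$, one has $\expect[e^{sX}]\leq\exp(s^2(M-m)^2/8)$. I would prove this by setting $\psi(s)\equiv\log\expect[e^{sX}]$ and observing that $\psi(0)=0$ and $\psi'(0)=\expect[X]=0$, while $\psi''(s)$ equals the variance of $X$ computed under the exponentially tilted measure with density proportional to $e^{sX}$. Because that tilted variable again takes values in $[m,M]$, its variance is at most $(M-m)^2/4$ by Popoviciu's inequality; a second-order Taylor expansion of $\psi$ with Lagrange remainder then gives $\psi(s)\leq s^2(M-m)^2/8$. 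This convexity-and-variance argument is the delicate part, since it requires justifying differentiation under the expectation and the uniform variance bound over the tilted family.

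With the lemma established, the factored product gives $\prob[Z-\expect[Z]\geq t]\leq\exp\!\left(-st+\tfrac{s^2}{8}\sum_{k=1}^N(M_k-m_k)^2\right)$. I would then minimize the exponent over $s>0$ by choosing $s=4t/\sum_{k=1}^N(M_k-m_k)^2$, producing the one-sided bound $\exp\!\left(-2t^2/\sum_{k=1}^N(M_k-m_k)^2\right)$. Finally, running the identical argument on the variables $-Z_k$ controls the lower tail $\prob[Z-\expect[Z]\leq-t]$ by the same quantity, and a union bound over the two tails contributes the factor of $2$, completing the two-sided estimate.
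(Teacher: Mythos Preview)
Your argument is correct and is precisely the standard proof of Hoeffding's inequality. The paper, however, does not prove this statement at all: it simply quotes it as \cite[Theorem~2.2.6]{vershynin2018high}, so there is no in-paper proof to compare against. For what it is worth, Vershynin's proof follows exactly the route you describe (Chernoff bound, independence to factor the MGF, Hoeffding's lemma via the tilted-variance argument, optimization over $s$, and a union bound for the two tails), so your proposal matches the cited source. One small cosmetic point: the paper's Theorem~\ref{t_Markov} is stated in the second-moment form $\prob[Z\geq t]\leq\expect[Z^2]/t^2$, and you correctly note that applying it to $e^{s(Z-\expect[Z])}$ and then relabeling $2s\mapsto s$ recovers the usual first-moment Chernoff step; you might also simply invoke the standard first-moment Markov inequality directly, since it is no less elementary.
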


Next are two bounds 
for sums of independent symmetric matrix-valued random variables.
The first is a matrix Bernstein concentration inequality 
\cite[Theorems 7.3.1 and~7.7.1]{tropp2015introduction} 
for sums of independent, symmetric, bounded, zero-mean random matrices.

\begin{theorem}[Matrix Bernstein inequality]\label{thm:bernstein2}
Let $\M{S}_1,\dots,\M{S}_N \in \R^{n\times n}$ be independent
symmetric random matrices with
\[ \expect[\M{S}_k ] = \M{0}, \qquad \|\M{S}_k\|_2 \leq L \qquad  1 \leq k \leq N. \]
Let the sum $\M{S}\equiv \sum_{k=1}^N\M{S}_k$ have a matrix-valued variance that is majorized by 
$\M{V}\in\R^{n\times n}$,
\[ \M{V} \succeq \Var(\M{S}) = \expect[\M{S}^2] = \sum_{k=1}^N\expect[\M{S}_k^2].\]
Abbreviate $\nu \equiv \|\M{V}\|_2$ and $d \equiv \intdim(\M{V})$. 
Then for $t>0$
\begin{equation}\label{e_b}
\myP\left[\|\M{S}\|_2\geq t \right] \leq 
8d\, \exp\left(\frac{-t^2}{2(\nu + Lt/3)} \right). 
\end{equation}
\end{theorem}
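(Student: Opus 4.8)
The plan is to prove this via the matrix Laplace transform (Chernoff) method, which underlies all of Tropp's matrix concentration results \cite{tropp2015introduction}. Since $\|\M{S}\|_2 = \max\{\lambda_{\max}(\M{S}),\,\lambda_{\max}(-\M{S})\}$ and the hypotheses are invariant under $\M{S}_k \mapsto -\M{S}_k$, it suffices to bound the upper tail of $\lambda_{\max}(\M{S})$ and then symmetrize; this symmetrization is what ultimately contributes the leading factor of $2$ (and hence the constant $8d$ rather than $4d$).

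First I would reduce the tail probability to a trace-exponential estimate. For any $\theta > 0$, applying Markov's inequality to the increasing map $s \mapsto e^{\theta s}$, together with the positivity of the matrix exponential and the elementary bound $e^{\theta \lambda_{\max}(\M{S})} = \lambda_{\max}(\exp(\theta\M{S})) \leq \trace \exp(\theta \M{S})$, gives
\begin{equation*}
\prob[\lambda_{\max}(\M{S}) \geq t] \leq e^{-\theta t}\, \expect\!\left[\trace \exp(\theta \M{S})\right].
\end{equation*}
Second, I would control the matrix moment generating function $\expect[\trace \exp(\theta \M{S})]$. Because the $\M{S}_k$ are independent, the key tool is Lieb's concavity theorem, which yields subadditivity of the matrix cumulant generating function and lets one replace the exponential of the sum by the exponential of the summed per-term log-MGF bounds,
\begin{equation*}
\expect[\trace \exp(\theta \M{S})] \leq \trace \exp\!\left(\textstyle\sum_{k=1}^N \log \expect[\exp(\theta \M{S}_k)]\right).
\end{equation*}
Using the boundedness $\|\M{S}_k\|_2 \leq L$, the zero-mean assumption, and the variance majorant $\M{V}$, each summand obeys the Bernstein cumulant estimate
\begin{equation*}
\textstyle\sum_{k=1}^N \log \expect[\exp(\theta \M{S}_k)] \preceq g(\theta)\, \M{V}, \qquad g(\theta) = \frac{\theta^2/2}{1 - L\theta/3},
\end{equation*}
valid for $0 < \theta < 3/L$, so that $\expect[\trace \exp(\theta\M{S})] \leq \trace\exp(g(\theta)\M{V})$ by operator monotonicity of the trace-exponential.

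The hard part --- and the step that distinguishes this bound from its classical counterpart --- is extracting the \emph{intrinsic} dimension $d = \intdim(\M{V})$ rather than the ambient dimension $n$ in front of the exponential. The naive estimate $\trace \exp(g(\theta)\M{V}) \leq n\,\|\exp(g(\theta)\M{V})\|_2$ loses a factor of $n$, which is unacceptable for large-scale problems. The remedy is an intrinsic-dimension trace inequality: for a convex function $\psi$ with $\psi(0) = 0$ and a positive semidefinite matrix $\M{M}$, the chord bound $\psi(\lambda) \leq (\lambda/\lambda_{\max})\,\psi(\lambda_{\max})$ on $[0,\lambda_{\max}]$ summed over the spectrum gives $\trace \psi(\M{M}) \leq \intdim(\M{M})\,\psi(\lambda_{\max}(\M{M}))$. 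The delicate point is that this lemma must be deployed through a Markov function engineered to vanish at the origin (so that the identity term does not silently reintroduce the $n$), which is precisely the technical subtlety I would expect to consume most of the effort. Applying it yields a bound of the form
\begin{equation*}
\prob[\lambda_{\max}(\M{S}) \geq t] \leq c\,d\, \exp\!\left(-\theta t + g(\theta)\nu\right)
\end{equation*}
for an absolute constant $c$. Finally I would optimize the exponent over $\theta \in (0, 3/L)$; the balancing choice $\theta = t/(\nu + Lt/3)$ produces the Bernstein exponent $-t^2/\bigl(2(\nu + Lt/3)\bigr)$. Symmetrizing to recover $\|\M{S}\|_2$ doubles the prefactor, and tracking the constants gives $8d$; the bound holds for all $t > 0$ since for small $t$ the right-hand side exceeds $1$ and the inequality is vacuous, which reconciles it with the range restriction inherent to intrinsic-dimension estimates. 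This establishes \eqref{e_b}.
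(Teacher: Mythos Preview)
Your sketch is correct and self-contained: you re-derive the intrinsic-dimension Bernstein inequality from the matrix Laplace transform, Lieb's concavity, the Bernstein cumulant bound, and the intrinsic-dimension trace lemma, then symmetrize and invoke vacuousness for small~$t$. The paper, however, does almost none of this work. It simply cites \cite[Theorems 7.3.1 and 7.7.1]{tropp2015introduction} for the bound \eqref{e_b} under the restriction $t \geq \sqrt{\nu} + L/3$, and its only original content is the removal of that restriction by the same vacuousness idea you mention in your final sentence --- but made quantitative: for $t < \sqrt{\nu} + L/3$ the exponent $-t^2/\bigl(2(\nu + Lt/3)\bigr)$ is bounded below by $-2/3$ (set $x = L/(3\sqrt{\nu})$ and minimize $-(1+x)^2/\bigl(2(1+x+x^2)\bigr)$ at $x=1$), whence the right-hand side exceeds $8e^{-2/3} > 4$ and the inequality is trivial. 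Your route buys self-containment at the cost of re-proving a substantial known theorem; the paper's route is a one-paragraph reduction that isolates precisely the new ingredient, namely dropping the lower bound on~$t$.
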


\begin{proof}
In \cite[Theorems 7.3.1 and~7.7.1]{tropp2015introduction} it is shown that (\ref{e_b})
holds, provided $t \geq \sqrt{\nu} + \frac{L}{3}$.
We show that (\ref{e_b}) always holds and the lower bound on $t$ is not necessary.
To see this, note that
\[ \frac{-t^2}{2(\nu + Lt/3)} \]
decreases monotonically as $t$ increases. Therefore we can bound it from below 
as long as $t < \sqrt{\nu} + L/3$, by
\[
    \frac{-t^2}{2(\nu + Lt/3)} > 
    \frac{-(\sqrt{\nu} + L/3)^2}{2\left(\nu + (L/3)(\sqrt{\nu}+L/3)\right)} \geq -\frac{2}{3}.
\]
The second inequality comes from 
setting $x=\frac{L}{3\sqrt{\nu}}$ and noting that $f(x)=-\frac{(1+x)^2}{2(1+x+x^2)}$ has a minimum
at $\hat{x}=1$ where $f(\hat{x})=-2/3$, and $\sqrt{\nu} = L/3$. Substituting the lower bound into
Theorem \ref{thm:bernstein2} gives
\[8d\, \exp\left(\frac{-t^2}{2(\nu + Lt/3)} \right) \geq 8\exp(-2/3) > 4.\]
But now (\ref{e_b}) holds trivially since
\[\myP\left[\|\M{S}\|_2\geq t \right] \leq  1 < 4 < 
8d\, \exp\left(\frac{-t^2}{2(\nu + Lt/3)} \right).\] 
 
\end{proof}

The second matrix concentration inequality \cite[Theorem 3.2]{chen2012masked} 
bounds the mean of the squared norm of the sum of symmetric random matrices.

\begin{theorem}\label{thm:symmrand} Let $\M{S}_1,\dots,\M{S}_N \in \R^{n\times n}$
with $n\geq 3$ be independent symmetric random matrices with zero mean. Then 
\[\expect\left[\left\|\sum_{k=1}^N\M{S}_k\right\|_2^2\right]^{1/2} \leq \sqrt{2e\ln{n}}\> \left\| \left(\sum_{k=1}^N\expect[\M{S}_k^2]\right)^{1/2} \right\|_2 + 4e\ln{n}\> 
\left(\expect\left[\max_{1\leq k \leq N}{\|\M{S}_k\|_2^2}\right]\right)^{1/2}.\]
\end{theorem}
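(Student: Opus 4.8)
The plan is to \emph{lift the operator norm to a Schatten norm}, apply a matrix moment inequality of Rosenthal--Pinelis type there, and then descend back to the operator norm at the cost of a dimension-free constant, exploiting that a Schatten $2p$-norm with $p\approx\ln n$ is comparable to the operator norm. Throughout write $\M{Z}\equiv\sum_{k=1}^N\M{S}_k$ and $\M{V}\equiv\sum_{k=1}^N\expect[\M{S}_k^2]\succeq\M{0}$, and for symmetric $\M{M}$ let $\|\M{M}\|_{S_{2p}}\equiv(\trace\M{M}^{2p})^{1/(2p)}$ denote the Schatten $2p$-norm, so that $\|\M{M}\|_2\le\|\M{M}\|_{S_{2p}}\le n^{1/(2p)}\|\M{M}\|_2$.

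First I would fix the integer $p\equiv\lceil\ln n\rceil$, so that $2p\ge 2\ln n\ge 2$ for $n\ge 3$ and the dimensional factor obeys $n^{1/(2p)}\le n^{1/(2\ln n)}=\sqrt{e}$. Since $\|\M{Z}\|_2\le\|\M{Z}\|_{S_{2p}}$ pointwise and $t\mapsto t^{p}$ is convex, Lyapunov's (power-mean) inequality gives
\[
\expect[\|\M{Z}\|_2^2]^{1/2}\le\expect[\|\M{Z}\|_{S_{2p}}^2]^{1/2}\le\expect\big[\|\M{Z}\|_{S_{2p}}^{2p}\big]^{1/(2p)},
\]
which reduces the claim to bounding the Schatten $2p$-th moment of $\M{Z}$. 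The second step is to invoke the matrix Rosenthal--Pinelis inequality for independent, symmetric, zero-mean summands,
\[
\expect\big[\|\M{Z}\|_{S_{2p}}^{2p}\big]^{1/(2p)}\le\sqrt{2p-1}\,\big\|\M{V}^{1/2}\big\|_{S_{2p}}+(2p-1)\,\expect\Big[\max_k\|\M{S}_k\|_{S_{2p}}^{2p}\Big]^{1/(2p)}.
\]
The variance term is then immediate: using $\|\M{V}^{1/2}\|_{S_{2p}}\le\sqrt{e}\,\|\M{V}^{1/2}\|_2$ and $\sqrt{2p-1}\le\sqrt{2\ln n}$ (up to the rounding in $p$) produces the first term with constant $\sqrt{2e\ln n}$.

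The main obstacle is the \emph{jump term}. The naive estimate $\|\M{S}_k\|_{S_{2p}}\le\sqrt{e}\,\|\M{S}_k\|_2$ only yields $(2p-1)\sqrt{e}\,\expect[\max_k\|\M{S}_k\|_2^{2p}]^{1/(2p)}$, and this $2p$-th moment of the maximum \emph{cannot} be replaced by a second moment in general (already for $N=1$ a heavy-tailed $\|\M{S}_1\|_2$ defeats any such comparison). Hence the second-moment form $\expect[\max_k\|\M{S}_k\|_2^2]^{1/2}$ must be produced intrinsically rather than by a moment-lowering step. I would obtain it by deriving Rosenthal--Pinelis through symmetrization and the noncommutative Khintchine inequality rather than treating it as a black box: replacing $\M{Z}$ by the Rademacher sum $\sum_k\epsilon_k\M{S}_k$ up to a universal constant and conditioning on the $\M{S}_k$, Khintchine bounds the conditional $S_{2p}$-moment by $\sqrt{2p-1}\,\|(\sum_k\M{S}_k^2)^{1/2}\|_{S_{2p}}$ with \emph{no} jump term at all.

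The jump term only appears when the random variance $\sum_k\M{S}_k^2$ is replaced by its mean $\M{V}$: the deviation $\sum_k(\M{S}_k^2-\expect[\M{S}_k^2])$ is again a sum of independent zero-mean symmetric matrices, whose individual operator norms are controlled by $\max_k\|\M{S}_k\|_2^2$, and a further matrix-concentration/self-bounding estimate on this deviation introduces precisely $\expect[\max_k\|\M{S}_k\|_2^2]$ in \emph{second-moment} form, because at the level of $\M{S}_k^2$ the natural per-summand scale is $\|\M{S}_k\|_2^2$ and only one power of it enters. Solving the resulting self-referential inequality for $\expect[\|\M{Z}\|_2^2]^{1/2}$, and collecting the two dimensional factors $n^{1/(2p)}\le\sqrt{e}$ from the Khintchine descent and from the variance-deviation bound, yields the second term with constant $4e\ln n$. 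I expect the closing of this self-referential bound, together with a careful accounting of the symmetrization constant and the two $\sqrt{e}$ factors, to be the genuinely delicate part; the Schatten lift and the Rosenthal--Pinelis/Khintchine inputs are standard once invoked.
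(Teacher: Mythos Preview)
The paper does not supply a proof of this statement: it is quoted directly as \cite[Theorem~3.2]{chen2012masked} and used as a black box, so there is no in-paper argument to compare against.

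Your sketch is essentially the strategy of the cited reference: lift to the Schatten $2p$-norm with $p\approx\ln n$, symmetrize, apply the noncommutative Khintchine inequality to reach the random square function $\bigl(\sum_k\M{S}_k^2\bigr)^{1/2}$, and then control the deviation of $\sum_k\M{S}_k^2$ from its mean $\M{V}$ by a self-bounding recursion that, because it operates at the level of $\M{S}_k^2$, naturally produces the second moment $\expect\bigl[\max_k\|\M{S}_k\|_2^2\bigr]$ rather than a $2p$-th moment. You have correctly diagnosed both the overall architecture and the genuinely delicate step.

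Two details to watch if you carry the argument through. First, with $p=\lceil\ln n\rceil$ the claimed bound $\sqrt{2p-1}\le\sqrt{2\ln n}$ fails; either take $p=\ln n$ as a real exponent (Khintchine holds for real $p\ge 1$) or absorb the rounding slack into the constant. Second, the symmetrization step contributes a multiplicative factor that must be threaded through the square root and combined with the two dimensional factors $n^{1/(2p)}\le\sqrt{e}$; landing exactly on $\sqrt{2e\ln n}$ and $4e\ln n$ requires the bookkeeping in \cite{chen2012masked} rather than the looser black-box Rosenthal--Pinelis form you first invoke, which as you note would leave the jump term in $2p$-th-moment form.
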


\subsection{Setup for the analysis}\label{sec:analysis}
Our Monte Carlo estimators 
compute the diagonal elements
of a symmetric 
matrix $\M{A} \in \R^{n\times n}$ 
by means of matrix vector products with $\M{A}$.
It samples $N$ independent random vectors $\V{w}_k\in\real^n$, 
and approximates the
vector of diagonal elements $\diag(\M{A})\in\real^n$
by the mean
\[ \diag(\widehat{\M{A})}= \frac{1}{N} \sum_{k=1}^N{\left((\M{Aw}_k)\circ \V{w}_k\right)}\in\real^n\qquad
\text{where}\quad \widehat{\M{A}}\equiv \frac{1}{N}\sum_{k=1}^N {\M{Aw}_k \V{w}_k^\top}\in\real^{n\times n}.\]
To see this, apply (\ref{e_diag}) and  (\ref{e_outer}) to
\begin{align*}
\D{\widehat{\M{A}}} = \M{I} \circ \widehat{\M{A}} &= \frac1N \sum_{k=1}^N{\diag(\M{Aw}_k)\M{I}\diag(\V{w}_k)} = \frac1N \sum_{k=1}^N{\diag((\M{Aw}_k) \circ \V{w}_k)}\\
&=\diag\left(\frac1N \sum_{k=1}^N{((\M{Aw}_k) \circ \V{w}_k)}\right)
=\diag\left(\diag(\widehat{\M{A}})\right)
\in\real^{n\times n}.
\end{align*}
Alternately, the diagonal elements of the estimators can be expressed as 
\begin{align*}
\widehat{\M{A}}_{ii} &= \frac{1}{N}\sum_{k=1}^N{(\M{A}\V{w}_k\V{w}_k^\top)_{ii}}
= \frac{1}{N}\sum_{k=1}^N{(\M{A}\V{w}_k)_i\,(\V{w}_k^\top)_{i}}\\
&= \frac{1}{N}\sum_{k=1}^N\left((\M{Aw}_k)\circ \V{w}_k\right)_i,\qquad 1\leq i\leq n.
\end{align*}

We measure the cost of a diagonal estimator by the number $N$
of samples. To assess the accuracy, we introduce 
a relative error in the form of 
normwise and componentwise
$(\epsilon,\delta)$ estimators, which extend the notion of  $(\epsilon,\delta)$ trace estimator from~\cite{avron2011randomized,roosta2015improved}.

\begin{definition}[Normwise $(\epsilon,\delta)$ diagonal estimator]\label{def:epsdelest}
Let $\M{A} \in \rnn$ be symmetric. 
Given user-specified parameters $0<\epsilon,\delta<1$, we say
that $\D{ \widehat{\M{A}}}$ is a
normwise $(\epsilon,\delta)$ estimator for the diagonal elements of $\M{A}$, if  
\[ \| \D{\M{A}} -\D{ \widehat{\M{A}}}\|_2 \leq \epsilon\, \|\D{\M{A}}\|_2\]
holds with probability at least $1-\delta$.
\end{definition}

In other words, for a user-specified failure probability $\delta>0$ and tolerance $\epsilon>0$, the normwise relative error of 
the diagonal estimator is, with 
probability at most $1-\delta$, at most $\epsilon$.
The two-norm in Definition~\ref{def:epsdelest} can be replaced by any matrix-$p$ norm, because 
the $p$-norm of a diagonal matrix $\M{D} \in \R^{n\times n}$ is 
$\|\M{D}\|_p = \max_{1 \leq i \leq n} {|d_{ii}|}$ for $p\geq 1$. Next we define a componentwise $(\epsilon,\delta)$ estimator.

\begin{definition}[Componentwise $(\epsilon,\delta)$ diagonal estimator]\label{def:compdelest}
Let $\M{A} \in \rnn$ be symmetric. 
Given user-specified parameters $0<\epsilon,\delta<1$ and diagonal 
element $a_{ii}\neq 0$ of $\M{A}$, we say
that $\widehat{a}_{ii} = ( \widehat{\M{A}})_{ii}$ is a
componentwise $(\epsilon,\delta)$ estimator for $a_{ii}$, if  
\[  |\widehat{a}_{ii} -a_{ii}| \leq \epsilon\, |a_{ii}|\]
holds with probability at least $1-\delta$.
\end{definition}

\section{Normwise bounds for Rademacher random vectors}\label{s_rademacher}
We present normwise bounds for Monte Carlo
estimators based on standard
(section~\ref{s_radenorm})
and on sparse Rademacher vectors
(section~\ref{s_4rade}).

\subsection{Standard Rademacher vectors}\label{s_radenorm}
After defining Rademacher vectors
(De\--finition~\ref{d_rade}) and
discussing their properties 
(Remarks \ref{r_rade} and~\ref{r_radediag}), 
we present a normwise absolute error bound (Theorem~\ref{thm:rademacher}),
and a bound on the minimal sampling amount 
that makes the Rademacher Monte Carlo estimator a
normwise $(\epsilon,\delta)$ diagonal estimator (Corollary~\ref{c:rademacher}).

\begin{definition}\label{d_rade}
A {\rm Rademacher random variable} takes on the
values $\pm 1$ with equal probability~1/2.
A {\rm Rademacher vector} is a random vector whose elements are independent
Rademacher random variables.
\end{definition}

Standard Rademacher vectors have the advantage of cheap matrix vector products and immediately recovering diagonal matrices.

\begin{remark}\label{r_rade}
The elements $w_j$ of a Rademacher vector $\V{w}$ have the following properties:
\begin{enumerate}
\item Zero mean: $\expect[w_j]=0$
\item Constant square: $w_{j}^2=1$
\item Independence: $\expect[w_jw_i]=0$ for $i\neq j$.
\end{enumerate}
\end{remark}

\begin{remark}\label{r_radediag}
Standard Rademacher vectors recover a diagonal matrix with a single sample, $N=1$. 

To see this, let $\M{A} = \D{\M{A}}\in\real^{n\times n}$ be diagonal, and $\V{w}\in\real^n$ a Rademacher vector. 
Remark~\ref{r_rade} implies that $\M{Aw}\V{w}^\top\in\real^{n\times n}$ has diagonal elements 
$a_{ii} w_i^2 = a_{ii}$, $1\leq i\leq n$.
\end{remark}

As a consequence, we can focus the analysis of standard Rademacher-based estimators on non-diagonal matrices.
The results below are special cases of
those for sparse Rademacher vectors in  section~\ref{s_4rade}.

\begin{theorem}\label{thm:rademacher}
Let $\M{A}\in \R^{n\times n}$ be non-diagonal symmetric, and
\[K_1 \equiv \|\D{\M{A}^2}-\D{\M{A}}^2\|_2,
\qquad K_2 \equiv \|\M{A}-\D{\M{A}}\|_{\infty}, \qquad  
d\equiv (\|\M{A}\|_F^2-\|\D{\M{A}}\|_F^2)/K_1. \]
If
$\widehat{\M{A}} \equiv\frac1N\sum_{k=1}^N \M{Aw}_k \V{w}_k^\top$ is a Monte Carlo estimator
with independent Rademacher vectors $\B{w}_k \in \R^n$, 
$1\leq k\leq N$, then
the probability that
the absolute error exceeds $t>0$ is at most
\[ \myP\left[\|\D{\M{A}} -\D{\widehat{\M{A}}}\|_2 \geq t \right] \leq 8d\, 
\exp\left(\frac{-Nt^2}{2(K_1 + tK_2/3)}\right). \]
\end{theorem}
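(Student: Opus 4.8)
The plan is to recognize the error $\D{\M{A}} - \D{\widehat{\M{A}}}$ as a sum of independent, symmetric, zero-mean random matrices and to apply the matrix Bernstein inequality (Theorem~\ref{thm:bernstein2}) directly. The stated bound has exactly the form produced by that theorem, so the entire task reduces to identifying the three parameters $L$, $\nu=\|\M{V}\|_2$, and $d=\intdim(\M{V})$.

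First I would define $\M{S}_k \equiv \frac{1}{N}\bigl(\D{\M{A}} - \D{\M{A}\V{w}_k\V{w}_k^\top}\bigr)$, so that the sum $\M{S}\equiv\sum_{k=1}^N\M{S}_k = \D{\M{A}} - \D{\widehat{\M{A}}}$ is precisely the quantity we wish to bound. Each $\M{S}_k$ is diagonal, hence symmetric, and independence of the $\V{w}_k$ transfers to the $\M{S}_k$. Using the Rademacher properties from Remark~\ref{r_rade} (namely $w_i^2=1$ and $\expect[w_iw_j]=0$ for $i\neq j$), a short computation shows that the $i$-th diagonal entry of $\D{\M{A}} - \D{\M{A}\V{w}\V{w}^\top}$ equals $-w_i\sum_{j\neq i}a_{ij}w_j$, which has zero mean; thus $\expect[\M{S}_k]=\M{0}$ and the estimator is unbiased.

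Next I would extract the two remaining parameters. For the uniform bound, since $\M{S}_k$ is diagonal its spectral norm is the largest diagonal magnitude, and $\bigl|w_i\sum_{j\neq i}a_{ij}w_j\bigr|\le\sum_{j\neq i}|a_{ij}|$; maximizing over $i$ gives $\|\M{S}_k\|_2 \le K_2/N = L$. For the variance, squaring the diagonal entries and taking expectation annihilates the cross terms (again by $\expect[w_jw_l]=0$ for $j\neq l$), leaving $\sum_{j\neq i}a_{ij}^2 = (\M{A}^2)_{ii} - a_{ii}^2$ on the $i$-th diagonal. Summing the $N$ identically distributed terms yields $\Var(\M{S})=\sum_{k=1}^N\expect[\M{S}_k^2] = \frac1N\bigl(\D{\M{A}^2}-\D{\M{A}}^2\bigr)$, which is positive semidefinite as a sum of squares, so I take $\M{V}$ equal to it (majorization holds with equality). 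Then $\nu=\|\M{V}\|_2 = K_1/N$, and since $\trace(\D{\M{A}^2})=\|\M{A}\|_F^2$ and $\trace(\D{\M{A}}^2)=\|\D{\M{A}}\|_F^2$, the intrinsic dimension is $\intdim(\M{V})=(\|\M{A}\|_F^2-\|\D{\M{A}}\|_F^2)/K_1 = d$.

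Substituting $L=K_2/N$ and $\nu=K_1/N$ into Theorem~\ref{thm:bernstein2} and clearing the common factor $1/N$ from the exponent produces the claimed bound $8d\exp\!\bigl(-Nt^2/(2(K_1+tK_2/3))\bigr)$, with the prefactor $8d$ carried through unchanged. The computation is essentially routine; the only place demanding care is the bookkeeping that converts the Rademacher sums into Frobenius-norm expressions for $\nu$ and $d$ — in particular, recognizing $(\M{A}^2)_{ii}=\sum_j a_{ij}^2$ via symmetry so that the diagonal-of-the-square matrix $\D{\M{A}^2}$ (as opposed to $\D{\M{A}}^2$) appears — together with confirming the positive semidefiniteness of $\M{V}$ so that Theorem~\ref{thm:bernstein2} applies.
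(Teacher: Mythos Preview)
Your proposal is correct and follows essentially the same approach as the paper: the paper proves Theorem~\ref{thm:rademacher} as the special case $s=1$ of Theorem~\ref{thm:rademacher3}, whose proof sets up the identical random matrices $\M{S}_k$, verifies zero mean, the bound $L=K_2/N$, the variance $\M{V}=\frac{1}{N}(\D{\M{A}^2}-\D{\M{A}}^2)$, and then invokes Theorem~\ref{thm:bernstein2}. Your direct $s=1$ argument is the natural simplification, since $w_i^2=1$ deterministically eliminates the fourth-moment bookkeeping needed in the sparse case.
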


\begin{proof} 
This is the special case $s=1$ of Theorem~\ref{thm:rademacher3}.
\end{proof}

The constants $K_1$ and $K_2$ represent the absolute deviation
of $\M{A}$ from diagonality, and more specifically the degree of diagonal dominance of $\M{A}$ in the absolute sense.
Theorem~\ref{thm:rademacher} implies that the Rademacher
estimator has a small absolute error when applied
to strongly diagonally dominant matrices. In other words,
the normwise absolute error in the Rademacher estimator 
decreases with increasing diagonal dominance of $\M{A}$ in the absolute sense.

We determine the least sampling amount 
required for the Monte Carlo estimator with Rademacher vectors to be a normwise $(\epsilon,\delta)$ diagonal estimator.

\begin{corollary}\label{c:rademacher}
Let $\M{A}\in \R^{n\times n}$ be non-diagonal symmetric. 
Let 
\begin{align*}
K_1 &\equiv \|\D{\M{A}^2}-\D{\M{A}}^2\|_2,\\
\Delta_1 &\equiv\frac{K_1}{\|\D{\M{A}}\|_2^2}, \qquad
\Delta_2\equiv
\frac{\|\M{A}-\D{\M{A}}\|_\infty}{\|\D{\M{A}}\|_{\infty}},\qquad
  d\equiv \frac{\|\M{A}\|_F^2-\|\D{\M{A}}\|_F^2}{K_1},
  \end{align*}
and let
$\widehat{\M{A}} \equiv\frac1N\sum_{k=1}^N \M{Aw}_k \V{w}_k^\top$ a Monte Carlo estimator
with independent Rademacher vectors $\B{w}_k \in \R^n$, $1\leq k\leq N$.
Pick $\epsilon>0$. For any $0<\delta<1$, if the sampling amount is at least
\begin{equation}\label{eqn:samp_rad}
N\geq \frac{\Delta_2}{3\epsilon^2} 
\left(2\epsilon +6\,\frac{\Delta_1}{\Delta_2}\right) \ln{(8d/\delta)},
 \end{equation} 
then $\|\D{\M{A}} -\D{\widehat{\M{A}}}\|_2 \leq \epsilon\, \|\D{\M{A}}\|_2$
 holds with probability at least $1-\delta$.
\end{corollary}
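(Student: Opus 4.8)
The plan is to invoke the tail bound from Theorem~\ref{thm:rademacher} and invert it into a lower bound on the sampling amount $N$. First I would set the error threshold to the target value $t = \epsilon\,\|\D{\M{A}}\|_2$, so that the event $\|\D{\M{A}} - \D{\widehat{\M{A}}}\|_2 \geq t$ is exactly the failure of the $(\epsilon,\delta)$ guarantee from Definition~\ref{def:epsdelest}. Substituting this $t$ into Theorem~\ref{thm:rademacher} gives
\[ \myP\left[\|\D{\M{A}} -\D{\widehat{\M{A}}}\|_2 \geq \epsilon\,\|\D{\M{A}}\|_2\right] \leq 8d\,\exp\left(\frac{-N\epsilon^2\|\D{\M{A}}\|_2^2}{2(K_1 + \epsilon\|\D{\M{A}}\|_2 K_2/3)}\right), \]
and it suffices to force the right-hand side to be at most $\delta$.

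Next I would take logarithms of the inequality $8d\,\exp(\cdots) \leq \delta$ and solve for $N$. Rearranging the resulting expression yields the explicit threshold
\[ N \geq \frac{2\bigl(K_1 + \epsilon\,\|\D{\M{A}}\|_2\,K_2/3\bigr)}{\epsilon^2\,\|\D{\M{A}}\|_2^2}\,\ln(8d/\delta). \]
Because the exponent in Theorem~\ref{thm:rademacher} is monotone, any $N$ meeting this bound keeps the failure probability at most $\delta$, which is precisely the $(\epsilon,\delta)$ conclusion.

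The final step is a renormalization that expresses the threshold through the dimensionless ratios $\Delta_1$ and $\Delta_2$. Here I would use the fact, noted after Definition~\ref{def:epsdelest}, that for the diagonal matrix $\D{\M{A}}$ all $p$-norms coincide, so $\|\D{\M{A}}\|_2 = \|\D{\M{A}}\|_\infty = \max_i |a_{ii}|$. Writing $K_1 = \Delta_1\,\|\D{\M{A}}\|_2^2$ and $K_2 = \Delta_2\,\|\D{\M{A}}\|_\infty = \Delta_2\,\|\D{\M{A}}\|_2$ and cancelling the common factor $\|\D{\M{A}}\|_2^2$ reduces the threshold to $(2\Delta_1 + \tfrac{2}{3}\epsilon\Delta_2)\,\epsilon^{-2}\,\ln(8d/\delta)$, which is exactly the claimed expression $\tfrac{\Delta_2}{3\epsilon^2}\bigl(2\epsilon + 6\Delta_1/\Delta_2\bigr)\ln(8d/\delta)$.

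I do not expect any genuine obstacle: the argument is an inversion of a monotone tail bound together with bookkeeping. The only point requiring care is the identity $\|\D{\M{A}}\|_2 = \|\D{\M{A}}\|_\infty$, which lets the $K_2$ term—whose natural denominator is the $\infty$-norm appearing in $\Delta_2$—be combined cleanly with the $K_1$ term, whose denominator is the $2$-norm appearing in $\Delta_1$. Without this coincidence the two ratios could not be merged into the single stated formula.
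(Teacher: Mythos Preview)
Your argument is correct and reaches the same bound as the paper, but the paper (which treats this as the $s=1$ case of Corollary~\ref{c:rademacher3}) organizes the inversion differently: it first sets the right-hand side of Theorem~\ref{thm:rademacher} equal to $\delta$, solves the resulting quadratic for $t$, and then imposes $t\le\epsilon\,\|\D{\M{A}}\|_2$, which leads to an auxiliary inequality $\gamma+\sqrt{\gamma^2+\beta\gamma}\le\epsilon$ that is solved for $\gamma$ and hence for $N$. Your route---substitute $t=\epsilon\,\|\D{\M{A}}\|_2$ first and then solve a \emph{linear} inequality in $N$---is more direct and avoids the quadratic detour entirely; both routes rely on the same monotonicity of the exponent and the same identification $\|\D{\M{A}}\|_2=\|\D{\M{A}}\|_\infty$, and they produce identical thresholds after the bookkeeping with $\Delta_1,\Delta_2$.
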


\begin{proof}
This is the special case $s=1$ of Corollary~\ref{c:rademacher3}.
\end{proof}

The constants $\Delta_1$ and $\Delta_2$ in
Corollary~\ref{c:rademacher} represent the 
respective relative counterparts of $K_1$ and 
$K_2$ in Theorem~\ref{thm:rademacher}: they
represent the relative deviation
of $\M{A}$ from diagonality, and more specifically the degree of diagonal dominance of $\M{A}$ in the relative sense.
Corollary~\ref{c:rademacher} implies that 
if $\M{A}$ is strongly diagonally dominant in the relative sense, then a small sampling
amount suffices to make
the Rademacher estimator a normwise $(\epsilon,\delta)$ diagonal 
estimator.  As with many
randomized sampling algorithms, the lower bound for $N$
is proportional to $1/\epsilon^2$.

\subsection{Sparse Rademacher vectors}\label{s_4rade}
For Rademacher vectors that are
parameterized in terms of sparsity
(Definition~\ref{d_sparserad}), we
derive a normwise absolute error bound (Theorem~\ref{thm:rademacher3}),
followed by the minimal sampling amount that 
makes the sparse Rademacher estimator a
normwise $(\epsilon,\delta)$ diagonal estimator
(Corollary~\ref{c:rademacher3}).

The random vectors in~\cite{achlioptas2003database} 
have elements that assume values from the discrete distribution $\{-\sqrt{3}\, ,0\, ,\sqrt{3}\}$ with respective probability $\left\{\frac16,\, \frac23,\, \frac16\right\}$.
This concept was extended in
\cite[(2)]{LHC06} to Rademacher vectors that are parameterized
in terms of a sparsity parameter~$s$.

\begin{definition}\label{d_sparserad}
A {\rm sparse Rademacher} random variable with parameter $s \geq 1$ takes the values $\{-\sqrt{s},\, 0,\, \sqrt{s}\}$ with probability $\{\frac{1}{2s},\, 1-\frac{1}{s},\, \frac{1}{2s}\}$ respectively.

A {\rm Sparse Rademacher vector} is a random vector whose elements are
independent sparse Rademacher random variables.
\end{definition}

The properties of sparse Rademacher vectors
are almost the same as those of the original Rademacher vectors in Remark~\ref{r_rade}.

\begin{remark}\label{r_sparserad}
The elements of a sparse Rademacher vector $\V{w}$ with parameter $s\geq 1$
have the following properties
\begin{enumerate}
\item Zero mean: $\E[w_j]=0$
\item Unit variance $\E[w_j^2]=1$
\item Independence: For $i\neq j$,
and integer $\ell\geq1$ 
\begin{equation*}
\expect[w_i^2w_j^2]=1,\qquad
\expect[w_i^{\ell}w_j]=
\expect[w_iw_j^{\ell}]=0.
\end{equation*}
\end{enumerate}
\end{remark}

The case $s=1$ corresponds to the original Rademacher vectors (Definition~\ref{d_rade}), while $s=3$ corresponds to
the choice in \cite{achlioptas2003database}.

Below is the extension of  Theorem~\ref{thm:rademacher}
to sparse Rademacher vectors with 
integer parameters $s$.

\begin{theorem}\label{thm:rademacher3}
Let $\M{A}\in \R^{n\times n}$ be non-diagonal symmetric, and 
and let
\begin{align*}
K_1(s)&\equiv\|\D{\M{A}^2}+(s-2)\D{\M{A}}^2\|_2,\qquad
K_2(s) \equiv \|s\M{A}-\D{\M{A}}\|_\infty,\\
d(s)& \equiv \frac{1}{K_1(s)}\|\M{A}\|_F^2+(s-2)\|\D{\M{A}}\|_F^2. 
\end{align*}
Let
$\widehat{\M{A}}\equiv \frac1N\sum_{j=1}^N {\M{Aw}_k \V{w}_k^\top}$ be a Monte Carlo
estimator with independent sparse Rademacher random vectors $\V{w}_k\in \R^n$,
with integer parameter $s\geq 1$.
Then the probability that
the absolute error exceeds $t>0$ is at most
\[ \myP\left[\|\D{\M{A}} -\D{\widehat{\M{A}}}\|_2 \geq t \right] \leq 8d(s)\, 
\exp\left(\frac{-Nt^2}{2(K_1(s) + tK_2(s)/3)}\right). \]
\end{theorem}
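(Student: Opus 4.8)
The plan is to write the error matrix $\D{\M{A}} - \D{\widehat{\M{A}}}$ as a sum of $N$ independent, symmetric, zero-mean random matrices and then invoke the matrix Bernstein inequality (Theorem~\ref{thm:bernstein2}). Concretely, I set
\[\M{S}_k \equiv \frac{1}{N}\left(\D{\M{A}} - \M{I}\circ(\M{A}\V{w}_k\V{w}_k^\top)\right), \qquad 1 \leq k \leq N,\]
so that $\D{\M{A}} - \D{\widehat{\M{A}}} = \sum_{k=1}^N \M{S}_k$. Each $\M{S}_k$ is diagonal, hence symmetric; by \eqref{e_outer} its $i$th diagonal entry equals $\frac{1}{N}(a_{ii} - w_{ki}(\M{A}\V{w}_k)_i)$, where $w_{ki}$ denotes the $i$th component of $\V{w}_k$. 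The remaining work is to (i) verify $\expect[\M{S}_k]=\M{0}$, (ii) majorize the variance to extract $\nu$ and the intrinsic dimension $d$, and (iii) bound $\|\M{S}_k\|_2$ uniformly to extract $L$; substituting these three quantities into \eqref{e_b} yields the claim after clearing the factors of $1/N$.

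For the mean and variance I would work entrywise, exploiting the moment identities of Remark~\ref{r_sparserad} together with the fourth-moment value $\expect[w_{ki}^4]=s$, which follows directly from Definition~\ref{d_sparserad}. Independence and zero mean give $\expect[w_{ki}w_{kj}]=\delta_{ij}$, so the off-diagonal contributions to $\expect[w_{ki}(\M{A}\V{w}_k)_i]$ vanish and only $a_{ii}$ survives, establishing $\expect[\M{S}_k]=\M{0}$. Since each $\M{S}_k$ is diagonal, $\expect[\M{S}_k^2]$ is diagonal as well, and squaring the $i$th entry and taking expectations leaves three terms: the cross term vanishes by independence, the fourth-moment term contributes $(s-1)a_{ii}^2$, and the off-diagonal square contributes $\sum_{j\neq i}a_{ij}^2 = (\M{A}^2)_{ii}-a_{ii}^2$. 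Summing gives $(\M{A}^2)_{ii}+(s-2)a_{ii}^2$, so that
\[\sum_{k=1}^N\expect[\M{S}_k^2] = \frac{1}{N}\left(\D{\M{A}^2}+(s-2)\D{\M{A}}^2\right) \equiv \M{V}.\]
This $\M{V}$ is diagonal with nonnegative entries $\sum_{j\neq i}a_{ij}^2 + (s-1)a_{ii}^2$, hence positive semidefinite, so $\intdim(\M{V})$ is well defined. Taking $\M{V}$ as the variance proxy (with equality) yields $\nu = \|\M{V}\|_2 = K_1(s)/N$, while $\trace(\M{V}) = \frac1N(\|\M{A}\|_F^2+(s-2)\|\D{\M{A}}\|_F^2)$ produces $\intdim(\M{V}) = d(s)$.

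The step I expect to be the main obstacle is the uniform norm bound, since $\M{S}_k$ is diagonal so $\|\M{S}_k\|_2 = \frac1N\max_i|a_{ii}-w_{ki}(\M{A}\V{w}_k)_i|$, and the argument hinges on which of the values $\{0,\pm\sqrt{s}\}$ the component $w_{ki}$ takes. I would split into cases: when $w_{ki}=0$ the entry reduces to $|a_{ii}|$, which is at most $(s-1)|a_{ii}| \leq K_2(s)$ for integer $s\geq 2$ (and this case cannot occur when $s=1$); when $w_{ki}=\pm\sqrt{s}$ the entry equals $|a_{ii}(1-s) - w_{ki}\sum_{j\neq i}a_{ij}w_{kj}|$, which the triangle inequality and $|w_{kj}|\leq\sqrt{s}$ bound by $(s-1)|a_{ii}| + s\sum_{j\neq i}|a_{ij}|$. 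Both bounds are at most $\max_i\big((s-1)|a_{ii}|+s\sum_{j\neq i}|a_{ij}|\big) = \|s\M{A}-\D{\M{A}}\|_\infty = K_2(s)$, using that $s$ is a positive integer so $|s-1|=s-1$. Hence $L=K_2(s)/N$. Feeding $\nu$, $d$, and $L$ into \eqref{e_b}, the factors of $1/N$ rearrange into $-Nt^2/(2(K_1(s)+tK_2(s)/3))$, which is the stated bound; Theorem~\ref{thm:rademacher} is then the case $s=1$.
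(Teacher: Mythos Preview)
Your proposal is correct and follows essentially the same route as the paper: decompose the error as a sum of the diagonal zero-mean matrices $\M{S}_k$, compute the exact variance $\M{V}=\tfrac1N(\D{\M{A}^2}+(s-2)\D{\M{A}}^2)$ from the moment identities of the sparse Rademacher entries (including $\expect[w_{ki}^4]=s$), bound $\|\M{S}_k\|_2$ by $(s-1)|a_{ii}|+s\sum_{j\neq i}|a_{ij}|$, and invoke Theorem~\ref{thm:bernstein2}. The only cosmetic differences are the sign of your $\M{S}_k$ and your explicit case split on $w_{ki}\in\{0,\pm\sqrt{s}\}$ in the boundedness step, where the paper simply notes that $|(\V{w}_k)_i^2-1|\leq s-1$ for integer $s$ and bounds both terms at once.
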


\begin{proof} 
Define the random diagonal matrices
\begin{align*}
\M{S}_k \equiv\frac1N \left(\M{I}\circ (\M{A}\V{w}_k\V{w}_k^\top) - \M{I}\circ\M{A}\right),
\qquad 1\leq k\leq N,
\end{align*}
and their sum 
\begin{align}\label{e_Z3}
\M{Z} \equiv \sum_{k=1}^N\M{S}_k = \M{I}\circ \widehat{\M{A}} - \M{I}\circ\M{A}
=\D{\M{A}}-\D{\widehat{\M{A}}}.
\end{align}
Before applying Corollary~\ref{thm:bernstein2}, we need
to verify the assumptions for the Bernstein inequality.

\paragraph{1. Expectation}
With (\ref{e_outer}), the first summand in $\M{S}_k$ equals 
\begin{align*}
\M{I}\circ (\M{A}\V{w}_k\V{w}_k^\top) = \diag(\M{Aw}_k) \M{I}\diag(\V{w}_k) = \diag(\M{Aw}_k)\diag(\V{w}_k),
\end{align*}
and with (\ref{e_diag}) this gives
\begin{align}\label{e_Sk3}
\M{S}_k =\frac1N \left(\diag(\M{Aw}_k)\diag(\V{w}_k)- \D{\M{A}}\right),
\qquad 1\leq k\leq N.
\end{align}
The linearity of expectation implies
\[ \expect[\M{S}_k ] = \frac1N 
\left(\underbrace{\expect[\diag(\M{Aw}_k)\diag(\V{w}_k)]}_{\M{T}} - \D{\M{A}}\right),\qquad 1\leq k\leq N.\]
Remark~\ref{r_sparserad} implies that element $(i,j)$ of
$\M{T}\equiv\expect[\diag(\M{Aw}_k)\diag(\V{w}_k)]$ equals
\begin{align}\label{e_t3}
t_{ij} = \expect[ (\M{Aw}_k)_i\,\M{I}_{ij}\,(\V{w}_k)_j] = \M{I}_{ij}a_{ij} 
\qquad 1 \leq i\leq n, \ 1 \leq j \leq n. 
\end{align}
Hence $\M{T} = \M{I}\circ\M{A}$, and 
$\expect[\M{S}_k]=\frac{1}{N}(\M{T}-\M{I}\circ\M{A})=\M{0}$, $1\leq k\leq N$.
Therefore, the random matrices have zero mean, $\expect[\M{S}_k] = \M{0}$; 
and so has their sum, $\expect[\M{Z}]=\M{0}$. 

\paragraph{2. Boundedness}
From (\ref{e_Sk3}) follows that the diagonal matrices $\M{S}_k$ have diagonal elements
\begin{align*}
(\M{S}_k)_{ii}&= \frac{1}{N}\left((\M{A}\V{w}_k)_i
(\V{w}_k)_i-a_{ii}\right)=
\frac{1}{N}\left((\V{w}_k)_i\,
\sum_{j=1}^n{a_{ij}(\V{w}_k)_j}-a_{ii}\right)\\
&=\frac{1}{N}\left(a_{ii} ((\V{w}_k)_i^2-1) + (\V{w}_k)_i\,
\sum_{j\neq i}{a_{ij}(\V{w}_k)_j}
\right), \qquad 1\leq i\leq n,\quad 1\leq k\leq N.
\end{align*}
Since $s\geq 1$ is an integer,
$|((\V{w}_k)_{ii}^2-1)a_{ii}|\leq (s-1)|a_{ii}|$ holds for any value of 
$(\V{w}_k)_i$, and 
\begin{align*}
|(\M{S}_k)_{ii}|\leq \frac{1}{N}\left((s-1)|a_{ii}|+s\sum_{j\neq i}{|a_{ij}|}\right),\qquad 1\leq i\leq n,\quad 1\leq k\leq N.
\end{align*}
Since the matrix infinity norm is absolute,
with $\|\M{S}_k\|_{\infty}=\|\,|\M{S}_k|\,\|_{\infty}$
we can bound the two-norms of the diagonal matrices $\M{S}_k$ by
\begin{align*}
\|\M{S}_k\|_2&\leq
\max_{1\leq i\leq n}{\frac1N\left(
(s-1)|a_{ii}|+s\sum_{j\neq i}{|a_{ij}|}\right)}
=\max_{1\leq i\leq n}{\frac1N\left(-|a_{ii}|+
s\sum_{j=1}^n{|a_{ij}|}\right)}\\
&=\frac{1}{N}\|s\M{A}-\D{\M{A}}\|_{\infty}
 = \frac{K_2(s)}{N}, \qquad 1\leq k\leq N.
\end{align*}
Set $L(s)\equiv K_2(s)/N$ where $K_2(s)>0$ since $\M{A}$ is not diagonal.

\paragraph{3. Variance}
In (\ref{e_Sk3}) abbreviate $\M{D}_{k} \equiv \diag(\M{Aw}_k)$, $\M{W}_{k} \equiv \diag(\V{w}_k)$, and $\M{D} \equiv \D{\M{A}}$, and write 
the summands in $\Var[\M{Z}] = \sum_{k=1}^N{\expect[\M{S}_k^2]}$ as 
\begin{align*}  
 \M{S}_k^2 = 
 \frac{1}{N^2}\left(\M{D}_k\M{W}_k-\M{D}\right)^2=
\frac{1}{N^2} \left((\M{D}_{k}\M{W}_k)^2 -2\M{D}\M{D}_{k}\M{W}_{k} + \M{D}^2 \right),\qquad 1\leq k\leq N,
\end{align*}
taking advantage of the fact that diagonal matrices commute.
The expectation is
\begin{align}\label{e_skex3}
 \expect\left[\M{S}_k^2\right] =  
\frac{1}{N^2} \left(\expect\left[(\M{D}_{k}\M{W}_k)^2\right] -2\M{D}\,\expect\left[\M{D}_{k}\M{W}_{k}\right] + \M{D}^2 \right),\qquad 
1\leq k\leq N.
\end{align}
To determine $\expect[\M{D}_k\M{W}_k]$, look at the individual  diagonal elements
and abbreviate $\V{u}=\V{w}_k$,
\begin{align}\label{e_dkwk3}
(\M{D}_k\M{W}_k)_{ii}={u}_i\,\V{e}_i^\top\M{Au} ={u}_i\,\sum_{j=1}^n{a_{ij}{u}_j}=a_{ii}{u}_i^2+\sum_{j\neq i}{a_{ij}{u}_i{u}_j},
\qquad 1\leq i\leq n.
\end{align}
Remark~\ref{r_sparserad} implies for 
the expectation
\begin{align*}
\expect\left[(\M{D}_k\M{W}_k)_{ii}\right] =a_{ii}\,\expect\left[{u}_i^2\right]+
\sum_{j\neq i}{a_{ij}\,\expect\left[{u}_i{u}_j\right]}=a_{ii}, \qquad 1\leq i\leq n.
\end{align*}
Thus $\expect\left[\M{D}_k\M{W}_k\right]=\M{D}$.
Insert this into~(\ref{e_skex3})
\begin{align}\label{e_skvar3}
 \expect\left[\M{S}_k^2\right] =  
\frac{1}{N^2} \left(\expect\left[(\M{D}_{k}\M{W}_k)^2\right] - \M{D}^2 \right),\qquad 
1\leq k\leq N.
\end{align}
To determine $\expect\left[(\M{D}_{k}\M{W}_k)^2\right]$,
start from the individual diagonal elements in (\ref{e_dkwk3}) 
\begin{align*}
(\M{D}_k\M{W}_k)_{ii}^2
&=\left(a_{ii}{u}_i^2+{u}_i\,\sum_{j\neq i}{a_{ij}{u}_j}\right)^2\\
&=\underbrace{a_{ii}^2{u}_i^4}_{\alpha}+
\underbrace{2a_{ii}{u}_i^3\,
\sum_{j\neq i}{a_{ij}{u}_j}}_{\beta}+
\underbrace{{u}_i^2
\left(\sum_{j\neq i}{a_{ij}{u}_j}\right)^2}_{\gamma}
\qquad 1\leq i\leq n.
\end{align*}
In the expectation
\begin{align}\label{e_abg}
\expect\left[(\M{D}_k\M{W}_k)_{ii}^2\right]=\expect[\alpha]+\expect[\beta]+\expect[\gamma]
\end{align}
we inspect each summand in turn.

Definition~\ref{d_sparserad} implies for the fourth moment
\begin{align}\label{e_alpha}
\expect[\alpha]=a_{ii}^2
\expect\left[{u}_i^4\right]=
a_{ii}^2
\left(\frac{1}{2s}\left(-\sqrt{s}\right)^4+\left(1-\frac{1}{s}\right)0+\frac{1}{2s}\left(\sqrt{s}\right)^4\right)=a_{ii}^2s.
\end{align}
Since ${u}_i$ is independent from
${u}_j$ for $j\neq i$
Remark~\ref{r_sparserad} implies
\begin{align}\label{e_beta}
\expect[\beta]=2a_{ii}\sum_{j\neq i}{a_{ij}\, \underbrace{\expect\left[{u}_i^3{u}_j\right]}_{=0}}=0.
\end{align}
Collect the independent elements of $\V{u}$, 
\begin{align*}
\gamma&={u}_i^2\left(\sum_{j\neq i}{a_{ij}{u}_j}\right)\left(\sum_{\ell\neq i}{a_{i\ell}{u}_{\ell}}\right)
={u}_i^2\left(\sum_{j\neq i}{a_{ij}{u}_j\left(a_{ij}{u}_j+\sum_{\ell\neq i,j}{a_{i\ell}{u}_{\ell}} \right)}\right)\\
&=\sum_{j\neq i}{a_{ij}^2{u}_j^2{u}_i^2}
+\sum_{j\neq i}{\sum_{\ell\neq i,j}{a_{ij}a_{i\ell}{u}_j{u}_{
\ell}{u}_i^2}}
\end{align*}
and apply Remark~\ref{r_sparserad},
\begin{align}\label{e_gamma}
\expect[\gamma]
&=\sum_{j\neq i}{a_{ij}^2\,\underbrace{\expect\left[{u}_j^2{u}_i^2\right]}_{=1}}
+\sum_{j\neq i}{\sum_{\ell\neq i,j}{a_{ij}a_{i\ell}\,\underbrace{\expect\left[{u}_j{u}_{
\ell}{u}_i^2\right]}_{=0}}}
=\sum_{j\neq i}{a_{ij}^2}
\end{align}
Substitute (\ref{e_alpha}), (\ref{e_beta}) and (\ref{e_gamma}) into (\ref{e_abg}). Then Remark~\ref{r_sparserad} and the
symmetry of~$\M{A}$ imply
\begin{align*}
\expect\left[(\M{D}_k\M{W}_k)_{ii}^2\right]=sa_{ii}^2+\sum_{j\neq i}{a_{ij}^2}=
\sum_{j=1}^n{a_{ij}^2}+(s-1)a_{ii}^2=
\|\V{a}_i\|_2^2+(s-1)|a_{ii}|^2.
\end{align*}
With $\D{\M{A}^2}= \diag\begin{pmatrix}(\|\V{a}_1\|_2^2 & \cdots &\|\V{a}_n\|_2^2\end{pmatrix}$
this gives for the whole diagonal matrix 
\begin{align*}
\expect[(\M{D}_k\M{W}_k)^2]=\D{\M{A}^2}
+(s-1)\M{D}^2, \qquad 1\leq k\leq N.
\end{align*}
At last, substitute this into the 
variance (\ref{e_skvar3})
\begin{align*}
 \expect\left[\M{S}_k^2\right] &=  
\frac{1}{N^2}\left(\D{\M{A}^2}+(s-1) \M{D}^2 -\M{D}^2\right)\\
&=
\frac{1}{N^2}\left(\D{\M{A}^2}
+(s-2)\M{D}^2\right),\qquad 1\leq k\leq N.
\end{align*}
Sum up the individual variances,
\begin{align*}
\M{V}(s)\equiv \Var[\M{Z}] &= \sum_{k=1}^N{\expect[\M{S}_k^2]} =\frac{1}{N^2}\,
 \sum_{k=1}^N{\left(\D{\M{A}^2}+(s-2)\M{D}^2\right)} \\
&= \frac{1}{N} \left(\D{\M{A}^2}+(s-2)\M{D}^2 \right).
 \end{align*}
Since $\M{V}(s)$ is diagonal, its norm is 
\[\nu(s) \equiv  \|\M{V}(s)\|_2 =\frac{1}{N} \| \D{\M{A}^2} +(s-2) \M{D}^2 \|_2 
=\frac{K_1(s)}{N},\]
where $K_1(s)>0$ since $\M{A}$ is not diagonal.
The intrinsic dimension of $\M{V}(s)$ is 
\begin{align*}
d(s)&\equiv \intdim(\M{V}) = \frac{\trace(\D{\M{A}^2} +(s-2)\M{D}^2)}{N\nu(s)}
= \frac{1}{K_1(s)} 
\sum_{j=1}^n{\left( \|\V{a}_j\|_2^2 +(s-2) |a_{jj}|^2\right)}\\
&=\frac{1}{K_1(s)}\|\M{A}\|_F^2+(s-2)\|\D{\M{A}}\|_F^2.
\end{align*}

\paragraph{4. Apply Theorem~\ref{thm:bernstein2}}
Substituting $L(s)=K_2(s)/N$ and $\nu(s)=K_1(s)/N$ into Theorem~\ref{thm:bernstein2} 
and remembering that the sum $\M{Z}$ in (\ref{e_Z3}) has zero mean gives
\[ 
\myP\left[\|\M{Z}\|_2\geq t\right]=
\myP\left[\|\D{\M{A}} -\D{\widehat{\M{A}}}\|_2 \geq t\right] \leq 
8d(s)\, \exp\left(\frac{-Nt^2}{2(K_1(s) + tK_2(s)/3)}\right). \]
\end{proof}

In the special case $s=1$ of standard
Rademacher vectors,
Theorem~\ref{thm:rademacher3}
reduces to Theorem~\ref{thm:rademacher}.
However, sparse Rademacher Monte Carlo estimators with $s>1$ do, in general, not recover a diagonal matrix with a single sample, $N=1$.

As $s$ increases, so do
the constants $K_1(s)$ and $K_2(s)$, and the upper bound on
$\myP\left[\|\D{\M{A}} -\D{\widehat{\M{A}}}\|_2 \geq t \right]$.
In other words, the sparser the vectors
$\V{w}_k$, the less accurate the Monte Carlo estimate $\D{\widehat{\M{A}}}$.

\begin{remark}[Non-integer sparsity levels]
The restriction to integers $s$ in 
Theorem~\ref{thm:rademacher3} is 
relevant only for $1<s<2$. More generally, Theorem~\ref{thm:rademacher3} 
holds for $s=1$, and any real number $s\geq 2$.
\end{remark}

The extension below of Corollary~\ref{c:rademacher} 
presents the minimal sampling amount that
makes the sparse Rademacher Monte Carlo estimator
a normwise $(\epsilon,\delta)$ diagonal estimator.

\begin{corollary}\label{c:rademacher3}
Let $\M{A}\in \R^{n\times n}$ be non-diagonal symmetric, and let
\begin{align*}
K_1(s)&\equiv \|\D{\M{A}^2}+(s-2)\D{\M{A}}^2\|_2,
\qquad
\Delta_1(s) \equiv\frac{K_1(s)}{\|\D{\M{A}}\|_2}\\
\Delta_2(s)&\equiv
\frac{\|s\M{A}-\D{\M{A}}\|_\infty}{\|\D{\M{A}}\|_{\infty}},\qquad
  d\equiv \frac{\|\M{A}\|_F^2+(s-2)\|\D{\M{A}}\|_F^2}{K_1(s)}.
  \end{align*}
Let $\widehat{\M{A}}\equiv \frac1N\sum_{j=1}^N {\M{Aw}_k \V{w}_k^\top}$ be a Monte Carlo
estimator with independent sparse Rademacher random vectors $\V{w}_k\in \R^n$,
with integer parameter $s\geq 1$. Pick $\epsilon>0$. For any $0<\delta<1$, if the sampling amount is at least
\begin{equation*}
N\geq \frac{\Delta_2(s)}{3\epsilon^2} 
\left(2\epsilon +6\,\frac{\Delta_1(s)}{\Delta_2(s)}\right) \ln{(8d(s)/\delta)},
 \end{equation*} 
then $\|\D{\M{A}} -\D{\widehat{\M{A}}}\|_2 \leq \epsilon\, \|\D{\M{A}}\|_2$
 holds with probability at least $1-\delta$.
\end{corollary}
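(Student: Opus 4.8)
The plan is to convert the tail bound of Theorem~\ref{thm:rademacher3} into a statement about the required sampling amount $N$. Since the quantity controlled there is exactly $\D{\M{A}}-\D{\widehat{\M{A}}}$, I would set the threshold to $t=\epsilon\,\|\D{\M{A}}\|_2$, so that the event $\|\D{\M{A}}-\D{\widehat{\M{A}}}\|_2\geq t$ is precisely the failure of the normwise $(\epsilon,\delta)$ guarantee of Definition~\ref{def:epsdelest}. It then suffices to force the right-hand side of Theorem~\ref{thm:rademacher3} to be at most $\delta$, since $\prob[\,\cdot>t\,]\leq\prob[\,\cdot\geq t\,]$ then guarantees that the desired inequality holds with probability at least $1-\delta$. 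Because this is a direct corollary, no new probabilistic machinery is needed; the work is purely an algebraic inversion followed by a normalization.

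With $t$ fixed, the next step is to solve $8d(s)\exp\!\big(-Nt^2/(2(K_1(s)+tK_2(s)/3))\big)\leq\delta$ for $N$. Taking logarithms and rearranging yields the threshold
\[ N\geq \frac{2\big(K_1(s)+tK_2(s)/3\big)}{t^2}\,\ln\!\big(8d(s)/\delta\big), \]
which already exhibits the $1/\epsilon^2$-type shape once $t$ is reinserted. Substituting $t=\epsilon\,\|\D{\M{A}}\|_2$ turns the term $2K_1(s)/t^2$ into $2K_1(s)/(\epsilon^2\|\D{\M{A}}\|_2^2)$ and the term $2tK_2(s)/(3t^2)=2K_2(s)/(3t)$ into $2K_2(s)/(3\epsilon\|\D{\M{A}}\|_2)$.

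Finally I would rewrite the bracket in terms of the relative quantities $\Delta_1(s)$ and $\Delta_2(s)$. Matching the $1/\epsilon^2$ term forces $\Delta_1(s)$ to be normalized by $\|\D{\M{A}}\|_2^2$, consistent with the $s=1$ case of Corollary~\ref{c:rademacher}. The only genuine obstacle is matching the denominator of the $1/\epsilon$ term: the definition of $\Delta_2(s)$ normalizes $K_2(s)=\|s\M{A}-\D{\M{A}}\|_\infty$ by $\|\D{\M{A}}\|_\infty$, whereas the substitution produces $\|\D{\M{A}}\|_2$. This is reconciled by the observation recorded after Definition~\ref{def:epsdelest} that all matrix $p$-norms of the diagonal matrix $\D{\M{A}}$ coincide, so $\|\D{\M{A}}\|_\infty=\|\D{\M{A}}\|_2=\max_{1\leq i\leq n}|a_{ii}|$. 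After this identification the bracket collapses to $2\Delta_1(s)/\epsilon^2+2\Delta_2(s)/(3\epsilon)$, which equals the factor $\tfrac{\Delta_2(s)}{3\epsilon^2}\big(2\epsilon+6\,\Delta_1(s)/\Delta_2(s)\big)$ claimed in the statement, completing the argument.
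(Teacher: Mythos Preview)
Your proof is correct and follows essentially the same approach as the paper: both invert the tail bound of Theorem~\ref{thm:rademacher3} to obtain a lower bound on $N$, and both identify $\|\D{\M{A}}\|_\infty=\|\D{\M{A}}\|_2$ to reconcile the definition of $\Delta_2(s)$. Your route is in fact slightly more direct than the paper's, which first solves the Bernstein exponent as a quadratic in $t$ and then imposes $t\leq\epsilon\,\|\D{\M{A}}\|_2$ via an auxiliary variable $\gamma$ satisfying $\gamma+\sqrt{\gamma^2+\beta\gamma}\leq\epsilon$, whereas you fix $t=\epsilon\,\|\D{\M{A}}\|_2$ at the outset and solve a linear inequality in $N$; both computations yield the identical threshold.
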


\begin{proof}
Denote the bound
for the failure probability in Theorem~\ref{thm:rademacher3} by
\[ \delta\equiv 8 d(s) \,\exp\left(\frac{-Nt^2}{2(K_1(s) + tK_2(s)/3)}\right), \]
and solve it for $t$, 
\begin{align*}
t = \frac{K_2(s)}{3N}\ln{(8d(s)/\delta)} + \sqrt{\frac{K_2(s)^2}{9N^2} \ln^2(8d(s)/\delta) + \frac{2K_1(s)}{N} \ln{(8d(s)/\delta)}}. 
\end{align*}
We can restate Theorem~\ref{thm:rademacher} in terms of the failure
probability: With probability at most $\delta$, the normwise absolute
error exceeds $\|\D{\M{A}}-\D{\widehat{\M{A}}}\|_2\geq t$, where
\begin{align*}
t=\frac{K_2(s)}{3N}\ln{(8 d(s)/\delta)} + \sqrt{\frac{K_2(s)^2}{9N^2} \ln^2(8d(s)/\delta) + \frac{2K_1(s)}{N} \ln{(8d(s)/\delta)}}.
\end{align*}
And in terms of the success probability:
With probability at least $1-\delta$, the normwise absolute
error is bounded above by $\|\D{\M{A}}-\D{\widehat{\M{A}}}\|_2\leq t$.
Converting this absolute error into a relative one
requires $t \leq \epsilon \|\D{\M{A}}\|_2$, in other words,
\[ \frac{t}{\|\D{\M{A}}\|_2}=
\gamma + \sqrt{\gamma^2 + \beta \gamma} \leq \epsilon, \] 
where 
\begin{align*}
\gamma \equiv\frac{K_2(s)\ln{(8d(s)/\delta)}}{3N\|\D{\M{A}}\|_2},\qquad
\beta \equiv \frac{6K_1(s)}{K_2(s)\|\D{\M{A}}\|_2}
\end{align*}
Solving for $\gamma$ gives $\gamma \leq \frac{\epsilon^2}{2\epsilon + \beta}$,
which implies
\begin{align*}
N &\geq \frac{K_2(s)}{3\epsilon^2\,\|\D{\M{A}}\|_2} \left(2\epsilon + \frac{6K_1(s)}{K_2(s)\|\D{\M{A}}\|_2} \right) \ln{(8d(s)/\delta)}\\
&=\frac{1}{3\epsilon^2}\,\frac{K_2(s)}{\|\D{\M{A}}\|_2} \left(2\epsilon +6\,\frac{K_1(s)}{\|\D{\M{A}}\|_2^2}
\frac{\|\D{\M{A}}\|_2}{K_2(s)} \right) \ln{(8d(s)/\delta)} \\
& =\frac{\Delta_2}{3\epsilon^2} 
\left(2\epsilon +6\,\frac{\Delta_1(s)}{\Delta_2(s)}\right) \ln{(8d(s)/\delta)},
\end{align*} 
where we exploited the norms of diagonal matrices,
\begin{align*}
\Delta_1(s) &=\frac{K_1(s)}{\|\D{\M{A}}\|_2^2},\qquad
\Delta_2=\frac{\|\M{A}-\D{\M{A}}\|_\infty}{\|\D{\M{A}}\|_{\infty}}=
\frac{K_2(s)}{\|\D{\M{A}}\|_2}.
\end{align*}
\end{proof}

 Corollary~\ref{c:rademacher3}
suggests that increasing the sparsity parameter $s$
could on the one hand
lower the computational cost per sample, but on the 
other hand increase the sampling amount for the same accuracy.

\section{Gaussian vectors}\label{s_gauss}
We present normwise bounds for random
vectors with bounded fourth moment
(section~\ref{s_bounded}), and
standard Gaussian vectors
(section~\ref{s_4gauss}).

\subsection{Random vectors with bounded fourth moment}\label{s_bounded}
We bound the expectation of the squared absolute
error (Theorem~\ref{t_fourth}) for 
Monte Carlo estimators based on
random vectors $\V{w}_k$ (assumed to have independent entries with zero mean and variance $1$) with bounded
fourth moment, $1\leq k\leq N$,
\begin{equation}\label{eqn:fourth} 
\expect\left[\max_{1\leq k \leq N}\|\V{w}_k \|_\infty^4\right] < + \infty.
\end{equation}
These include standard
Rademacher (section~\ref{s_radenorm})
and sparse Rademacher vectors
(section~\ref{s_4rade}), as well
as standard Gaussian vectors (section~\ref{s_4gauss}).

\begin{theorem}\label{t_fourth}
Let $\M{A}\in \R^{n\times n}$ with $n\geq 3$
be symmetric, and 
\[\widehat{\M{A}}\equiv \frac1N\sum_{j=1}^N {\M{Aw}_k \V{w}_k^\top}\] 
be a Monte Carlo
estimator with independent random vectors
$\B{w}_k \in \R^n$, $1\leq k\leq N$, that have
independent elements 
with zero mean and variance 1.
If the vectors $\V{w}_k$ also 
have a bounded fourth moment (\ref{eqn:fourth}), then
\[ \expect\left[\|\D{ \widehat{\M{A}}} - \D{ \M{A}}\|_2^2\right]^{1/2}  \leq \|\M{A}\|_\infty\,\left(\sqrt{\frac{8e\ln n}{N}}  + \frac{8e\ln n}{N} \right) \left(\expect\left[\max_{1\leq k \leq N}  \|\V{w}_k\|_\infty^4\right]\right)^{1/2}.\]
\end{theorem}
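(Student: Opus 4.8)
The plan is to cast $\D{\widehat{\M{A}}}-\D{\M{A}}$ as a sum of independent, zero-mean, symmetric random matrices and invoke the concentration inequality of Theorem~\ref{thm:symmrand}. Following the construction in the proof of Theorem~\ref{thm:rademacher3}, I would set
\[ \M{S}_k \equiv \frac1N\left(\M{I}\circ(\M{A}\V{w}_k\V{w}_k^\top)-\M{I}\circ\M{A}\right),\qquad 1\le k\le N,\]
so that each $\M{S}_k$ is a random diagonal matrix and, by (\ref{e_diag}), $\sum_{k=1}^N\M{S}_k=\D{\widehat{\M{A}}}-\D{\M{A}}$. The first task is to verify the hypotheses of Theorem~\ref{thm:symmrand}: independence is inherited from the $\V{w}_k$, symmetry is immediate, and $n\ge 3$ holds by assumption. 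The zero-mean property $\expect[\M{S}_k]=\M{0}$ follows exactly as in (\ref{e_t3}), using only that the entries of $\V{w}_k$ are independent with zero mean and unit variance (so $\expect[(\M{A}\V{w}_k)_i(\V{w}_k)_i]=a_{ii}$); none of the distributional assumptions beyond these enter here.

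With the hypotheses in place, Theorem~\ref{thm:symmrand} reduces the problem to bounding the variance term $\sqrt{2e\ln n}\,\|(\sum_k\expect[\M{S}_k^2])^{1/2}\|_2$ and the boundedness term $4e\ln n\,(\expect[\max_k\|\M{S}_k\|_2^2])^{1/2}$. The structural simplification I would exploit throughout is that every $\M{S}_k$ is diagonal, so both its two-norm and that of $(\sum_k\expect[\M{S}_k^2])^{1/2}$ collapse to a maximum over diagonal entries. A single deterministic elementwise estimate controls both terms:
\[ |(\M{S}_k)_{ii}|\le \frac1N\left(|(\M{A}\V{w}_k)_i(\V{w}_k)_i|+|a_{ii}|\right)\le \frac{\|\M{A}\|_\infty}{N}\left(\|\V{w}_k\|_\infty^2+1\right),\]
using $|(\M{A}\V{w}_k)_i|\le\|\V{w}_k\|_\infty\|\M{A}\|_\infty$ and $|a_{ii}|\le\|\M{A}\|_\infty$, whence $\|\M{S}_k\|_2=\max_i|(\M{S}_k)_{ii}|\le \tfrac{\|\M{A}\|_\infty}{N}(\|\V{w}_k\|_\infty^2+1)$.

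For the boundedness term I would square this estimate, use $(\|\V{w}_k\|_\infty^2+1)^2\le 2(\|\V{w}_k\|_\infty^4+1)$, take the maximum over $k$ and then the expectation, and absorb the additive $1$ via $\mu\equiv\expect[\max_k\|\V{w}_k\|_\infty^4]\ge(\expect[(\V{w}_1)_1^2])^2=1$; this yields $\expect[\max_k\|\M{S}_k\|_2^2]\le 4\|\M{A}\|_\infty^2\mu/N^2$, hence a boundedness term of $(8e\ln n/N)\|\M{A}\|_\infty\sqrt{\mu}$. For the variance term I would apply the same squared elementwise bound inside the expectation, giving $\sum_k\expect[(\M{S}_k)_{ii}^2]\le \|\M{A}\|_\infty^2\sum_k\expect[(\|\V{w}_k\|_\infty^2+1)^2]/N^2\le 4\|\M{A}\|_\infty^2\mu/N$ for every $i$; taking the max over $i$ and the square root gives a variance term of $\sqrt{8e\ln n/N}\,\|\M{A}\|_\infty\sqrt{\mu}$. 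Adding the two reproduces the claimed bound exactly.

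The main obstacle is not the overall structure, which is routine once one recognizes the diagonal reduction, but the correct handling of the mixed second- and fourth-moment structure. In particular, one cannot replace $\|\V{w}_k\|_\infty^2+1$ by $2\|\V{w}_k\|_\infty^2$ pointwise, since $\|\V{w}_k\|_\infty$ may be arbitrarily small for Gaussian entries; the additive $1$ must be carried through the squaring and only discarded after taking expectations, where $\mu\ge 1$ finally applies. A secondary subtlety is the deliberate choice to use the crude uniform elementwise bound for the variance term rather than computing $\expect[\M{S}_k^2]$ exactly: the exact computation (as in Theorem~\ref{thm:rademacher3}) would produce $\D{\M{A}^2}$ together with distribution-dependent fourth-moment corrections and is sharper by a constant, but the crude bound keeps the result distribution-free and makes both error terms share the common factor $\|\M{A}\|_\infty\sqrt{\mu}$ with the clean constants $\sqrt{8e\ln n/N}$ and $8e\ln n/N$.
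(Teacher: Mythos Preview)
Your argument is correct and reaches the stated bound with the exact constants. It differs from the paper's proof in one structural respect: the paper does not apply Theorem~\ref{thm:symmrand} directly to the centered summands $\M{S}_k$, but first symmetrizes, replacing $\M{I}\circ(\M{A}\V{w}_k\V{w}_k^\top)-\M{I}\circ\M{A}$ by $\varepsilon_k\,\M{I}\circ(\M{A}\V{w}_k\V{w}_k^\top)$ with independent Rademacher signs $\varepsilon_k$ (at the cost of a factor~$2$ in the squared norm), and only then invokes Theorem~\ref{thm:symmrand} on $\M{Y}_k=\varepsilon_k\,\diag(\M{A}\V{w}_k)\diag(\V{w}_k)$. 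After symmetrization the centering term is gone, and the entrywise bound becomes the cleaner $(\M{Y}_k^2)_{ii}\le\|\V{a}_i\|_1^2\,\|\V{w}_k\|_\infty^4$, with no additive~$1$ to track. Your route is more direct: since the $\M{S}_k$ are already diagonal (hence symmetric) and zero-mean, Theorem~\ref{thm:symmrand} applies to them immediately, and the surplus $+1$ from the centering is harmlessly absorbed after expectation via $\mu\ge 1$. The two approaches land on the same final constants; yours avoids the symmetrization detour, while the paper's avoids the bookkeeping of the additive~$1$ and the Jensen step $\mu\ge 1$.
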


\begin{proof}   
We make use of matrix concentration inequalities but follow the spirit of the analysis in~\cite{chen2012masked}.   

\paragraph{1. Symmetrization}
Write the normwise error by exploiting diagonal Schur products (\ref{e_diag})
\begin{align*}
\D{ \widehat{\M{A}}} - \D{\M{A}} = 
\M{I}\circ \widehat{\M{A}} - \M{I}\circ \M{A}
= \frac1N\sum_{k=1}^N{\left(\M{I}\circ (\M{A}\V{w}_k\V{w}_k^\top) - \M{I}\circ\M{A}\right)},
\end{align*}
and take expectations of the squared norms  
\begin{align*}
\expect\left[\left\|\D{ \widehat{\M{A}}} - \D{\M{A}}\right\|_2^2\right]=
\frac{1}{N^2}\,\expect\left[\left\|\sum_{k=1}^N{\left(\M{I}\circ (\M{A}\V{w}_k\V{w}_k^\top) - \M{I}\circ\M{A}\right)}\right\|_2^2\right].
\end{align*}
From the assumption that $\V{w}_k$ has
independent elements with zero mean and and variance~1 follows
$\expect\left[\M{I}\circ (\M{A}\V{w}_k\V{w}_k^\top)\right] = \M{I}\circ\M{A}$.  
Hence the matrix random variables 
\[\M{X}_k\equiv \M{I}\circ (\M{A}\V{w}_k\V{w}_k^\top)-\M{I}\circ\M{A},
\qquad 1\leq k\leq N\]
have zero mean.
We use symmetrization~\cite[Lemma 6.4.2]{vershynin2018high} to create \textit{symmetric} random variables $\varepsilon_k\, \M{X}_k$,
where $\varepsilon_k$ are independent symmetric Bernoulli random variables, 
that is, they are Rademacher variables 
as in Definition~\ref{d_rade}. The
Rademacher variables~$\varepsilon_k$ are independent of each other and also independent of the random vectors $\V{w}_k$. 
 Remark~\ref{r_rade} implies $\E[\varepsilon_k]=0$, hence 
 \begin{equation}\label{e_4zero}
\expect\left[\sum_{k=1}^N{\varepsilon_k\,\M{I}\circ{\M{A}}}\right]=\M{0}.
\end{equation}
Then \cite[Lemma 6.4.2]{vershynin2018high},
(\ref{e_4zero}) and (\ref{e_outer}) imply
\begin{align}\label{e_step1}
\expect\left[\left\| \sum_{k=1}^N \M{X}_k\right\|_2^2\right] 
\leq 2\, \expect\left[\left\| \sum_{k=1}^N{\varepsilon_k\,\M{I}\circ (\M{A}\V{w}_k\V{w}_k^\top)}\right\|_2^2\right]
= 2\, \expect\left[\left\| \sum_{k=1}^N{\M{Y}_k} \right\|_2^2\right],
\end{align}
where $\M{Y}_k\equiv \varepsilon_k\,\diag(\M{A}\V{w}_k)\diag(\V{w}_k)$
are symmetric random matrices, and
the second and third expectations range over all random vectors $\V{w}_k$ and
all Rademacher variables~$\varepsilon_k$.

\paragraph{2. Concentration inequality}
Applying Cauchy-Schwartz inequality and Theorem~\ref{thm:symmrand}
to the sum $\M{Z} \equiv\sum_{k=1}^N\M{Y}_k$ gives
\begin{align}\label{e_step2}
\expect[\|\M{Z}\|_2^2]^{1/2} \leq \sqrt{2e\ln{n}}\> \left\|\left(\sum_{i=1}^N{\expect[\M{Y}_k^2]}\right)^{1/2} \right\|_2 
+ 4e\ln{n}\> \expect\left[\max_{1\leq k \leq N}{\|\M{Y}_k\|_2^2}\right]^{1/2}. 
\end{align}
We bound the expectations that represent the matrix variance and 
the maximal two-norm separately.

\paragraph{3. Variance} 
As in item 3 of the proof of Theorem~\ref{thm:rademacher3} abbreviate 
\begin{align*}
\M{D}_{k}\equiv \diag(\M{Aw}_k), \qquad \M{W}_{k}\equiv \diag(\M{w}_k),\qquad
\M{O}\equiv \diag\left(\begin{bmatrix}\|\V{a}_1\|_1 & \cdots &\|\V{a}_n\|_1\end{bmatrix}\right).
\end{align*}
With this notation, $\expect[\M{Y}_k^2] = \expect [\M{D}_{k}^2 \M{W}_{k}^2]$. Consider the  diagonal term 
\[ (\M{D}_{k}^2 \M{W}_{k}^2)_{ii} = \left(\sum_{j=1}^n a_{ij} (\V{w}_k)_j\right)^2(\V{w}_k)_i^2 \leq \|\V{a}_i\|_1^2 \|\V{w}_k\|_\infty^4 \qquad 1 \leq i \leq n.\]
Therefore, $\M{D}_{k}^2 \M{W}_{k}^2 \preceq \|\V{w}_k\|_\infty^4 \M{O}^2 $. The symmetry of $\M{A}$
implies $\M{O}\preceq \|\M{A}\|_\infty\M{I}$.
Combining the two inequalities gives
\begin{align}\label{e_variance}
\expect[\M{Y}_k^2] = \expect [\M{D}_{k}^2 \M{W}_{k}^2] 
\preceq \|\M{A}\|_\infty^2 \>
\expect\left[\max_{1\leq k \leq N}\|\V{w}_k\|_\infty^4\right] \M{I}.
\end{align}
Now take square roots,
\begin{align*}
\left(\sum_{k=1}^N{\expect[\M{Y}_k^2]}\right)^{1/2} \preceq \sqrt{N}\> \|\M{A}\|_\infty\> 
\expect\left[\max_{1\leq k \leq N}{\|\V{w}_k\|_\infty^4}\right]^{1/2} \M{I},
\end{align*}
and bound the norm,
\begin{align}\label{e_step3}
\left\|\left(\sum_{k=1}^N{\expect[\M{Y}_k^2]}\right)^{1/2} \right\|_2 
\leq \sqrt{N}\>\|\M{A}\|_\infty\> 
\expect\left[\max_{1\leq k\leq N}{\|\V{w}_k\|_\infty^4}\right]^{1/2}.
\end{align}

\paragraph{4: Maximal two-norm} 
In analogy to (\ref{e_variance}), we derive
\begin{align*} 
\expect\left[\max_{1\leq k \leq N}{\|\M{Y}_k\|_2^2}\right] 
\leq \|\M{A}\|_\infty^2 \>
\expect\left[\max_{1\leq k \leq N}{\|\V{w}_k\|_\infty^4}\right], 
\end{align*}
and its  square root
\begin{align}\label{e_step4}
\expect\left[\max_{1\leq k \leq N}{\|\M{Y}_k\|_2^2}\right]^{1/2} 
\leq \|\M{A}\|_\infty\> \expect\left[\max_{1\leq k \leq N} {\|\V{w}_k\|_\infty^4}\right]^{1/2}. 
\end{align}

\paragraph{5. Putting everything together}  
Substitute the variance bound (\ref{e_step3}) and the norm bound (\ref{e_step4})
into the expectation (\ref{e_step2}) for the sum
\[\expect[\|\M{Z}\|_2]^{1/2} \leq 
\left(\sqrt{2e\ln{n}}\, N^{1/2}  + 4e\ln{n}\right)    \|\M{A}\|_\infty 
\>\expect\left[\max_{1\leq k \leq N}{\|\V{w}_k\|_\infty^4}\right]^{1/2}, \]
substitute this, in turn, into the expectation (\ref{e_step1}) for
the absolute error,
\begin{align*}
&\expect\left[\|\D{\widehat{\M{A}}} - \D{\M{A}}\|_2^2\right]^{1/2} 
\leq \frac{2}{N}\> \expect\left[\|\M{Z}\|_2\right]^{1/2} \\
&\qquad\qquad\leq\frac{2}{N}\>
\left(\sqrt{2e\ln{n}}\, N^{1/2}  + 4e\ln{n}\right)    \|\M{A}\|_\infty 
\>\expect\left[\max_{1\leq k \leq N}{\|\V{w}_k\|_\infty^4}\right]^{1/2},
\end{align*}
and simplify.
\end{proof}

\subsection{Gaussian vectors}\label{s_4gauss} We determine the minimal sampling amount for
Gaussian Monte Carlo estimators to be
normwise $(\epsilon,\delta)$ diagonal estimators.

\begin{corollary}\label{c_fourth}
Let $\M{A}\in \R^{n\times n}$ with $n\geq 3$
be symmetric, and let
\[\widehat{\M{A}}\equiv \frac1N\sum_{j=1}^N {\M{Aw}_k \V{w}_k^\top}\] be a Monte Carlo
estimator with independent Gaussian
random vectors $\V{w}_k\sim\mathcal{N}(\V{0}, \M{I})$ in $\real^n$, $1\leq k\leq N$.
Pick $\epsilon>0$.
For any $0<\delta<1$, if the sampling amount 
$N$ satisfies ${8e\ln n} \leq N \leq n$, and 
is at least 
\begin{align}\label{eqn:samp_gauss}
N\geq\frac{128\, (e\,\ln{n})^3}{\epsilon^2\,\delta}\>
\left(\frac{\|\M{A}\|_\infty}{\|\D{\M{A}}\|_{\infty}}\right)^2,
\end{align}
then $\|\D{\widehat{\M{A}}} -\D{ \M{A}}\|_2\leq 
\epsilon \| \D{ \M{A}}\|_2$
holds with probability at least $1-\delta$.
\end{corollary}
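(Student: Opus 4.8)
The plan is to specialize Theorem~\ref{t_fourth} to Gaussian vectors, control the fourth-moment maximum that appears there, and then convert the resulting bound on $\expect[\|\D{\widehat{\M{A}}}-\D{\M{A}}\|_2^2]$ into a probability statement via Markov's inequality (Theorem~\ref{t_Markov}). Applying Theorem~\ref{t_fourth} is legitimate because a standard Gaussian vector has independent entries with zero mean, unit variance, and finite fourth moment. The lower bound $8e\ln n\le N$ forces $\tfrac{8e\ln n}{N}\le 1$, so the two terms collapse, $\sqrt{\tfrac{8e\ln n}{N}}+\tfrac{8e\ln n}{N}\le 2\sqrt{\tfrac{8e\ln n}{N}}$, and squaring the bound of Theorem~\ref{t_fourth} gives
\[\expect\!\left[\|\D{\widehat{\M{A}}}-\D{\M{A}}\|_2^2\right]\le \frac{32e\ln n}{N}\,\|\M{A}\|_\infty^2\,\expect\!\left[\max_{1\le k\le N}\|\V{w}_k\|_\infty^4\right].\]

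The main work --- and the step I expect to be the real obstacle --- is bounding the Gaussian moment $\expect[\max_{1\le k\le N}\|\V{w}_k\|_\infty^4]$. The quantity $\max_{1\le k\le N}\|\V{w}_k\|_\infty$ is the largest absolute value among the $Nn$ independent standard Gaussian entries of the vectors $\V{w}_k$, so its analysis is the one genuinely probabilistic piece. Here is precisely where the hypothesis $N\le n$ enters: it guarantees $Nn\le n^2$, so a union bound together with the Gaussian tail estimate $\prob[|g|\ge s]\le e^{-s^2/2}$ yields $\prob[Y\ge t]\le n^2 e^{-t/2}$ for $Y\equiv\max_{k,i}(\V{w}_k)_i^2$. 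Integrating via $\expect[Y^2]=\int_0^\infty 2t\,\prob[Y\ge t]\,dt$ and splitting at the crossover $t_0=4\ln n$ produces a bound of the form $16(\ln n)^2$ plus lower-order terms in $\ln n$, which I would absorb into the clean estimate
\[\expect\!\left[\max_{1\le k\le N}\|\V{w}_k\|_\infty^4\right]\le 4(e\ln n)^2.\]
This absorption is comfortable because the joint constraint $8e\ln n\le N\le n$ is only non-vacuous once $n$ is large (roughly $n\gtrsim 100$), where $\ln n$ is already sizeable and dominates the constants; keeping the logarithm controlled by $\ln n$ rather than by $N$ is exactly what $N\le n$ buys, and it is what makes the final inequality solvable in closed form for $N$.

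Finally I would combine the two displays to obtain $\expect[\|\D{\widehat{\M{A}}}-\D{\M{A}}\|_2^2]\le \tfrac{128(e\ln n)^3}{N}\|\M{A}\|_\infty^2$, and apply Theorem~\ref{t_Markov} with $Z=\|\D{\widehat{\M{A}}}-\D{\M{A}}\|_2$ and threshold $t=\epsilon\|\D{\M{A}}\|_2$. Using that $\D{\M{A}}$ is diagonal, hence $\|\D{\M{A}}\|_2=\|\D{\M{A}}\|_\infty$, the failure probability is at most $\tfrac{128(e\ln n)^3\|\M{A}\|_\infty^2}{N\epsilon^2\|\D{\M{A}}\|_\infty^2}$. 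Requiring this to be at most $\delta$ and solving for $N$ reproduces exactly the stated threshold~(\ref{eqn:samp_gauss}).
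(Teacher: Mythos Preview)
Your approach is essentially the same as the paper's: apply Theorem~\ref{t_fourth}, bound the Gaussian fourth-moment maximum, invoke Markov's inequality (Theorem~\ref{t_Markov}) with threshold $\epsilon\|\D{\M{A}}\|_2$, and solve for~$N$. The only difference is cosmetic: the paper cites the estimate $\bigl(\expect[\max_k\|\V{w}_k\|_\infty^4]\bigr)^{1/2}\le e\ln(nN)$ from \cite[(3.7)]{chen2012masked} and then uses $N\le n$ to replace $\ln(nN)$ by $2\ln n$, whereas you sketch a direct derivation of the same bound via a union bound over the $Nn\le n^2$ Gaussian entries and tail integration---both routes land on $\expect[\max_k\|\V{w}_k\|_\infty^4]\le 4(e\ln n)^2$ and the identical constant $128(e\ln n)^3$.
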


\begin{proof}
For Gaussian random vectors $\V{w}_k\sim\mathcal{N}(\V{0},\M{I})$,
\cite[(3.7)]{chen2012masked} implies 
\[  \left(\expect \left[\max_{1\leq k \leq N}{\|\V{w}_k\|_\infty^4} \right]\right)^{1/2} 
\leq  e \ln{(nN)}
\max_{1\leq i,j\leq n}{|\M{I}_{ij}|}=
e\ln{(nN)}. \]
Substituting this into Theorem~\ref{t_fourth} gives 
\[ \left( \expect \left[ \| \D{ \widehat{\M{A}}} - \D{ \M{A}}\|_2^2\right]\right)^{1/2}  \leq \left(\sqrt{\frac{8e\ln{n}}{N}}  + \frac{8e\ln{n}}{N} \right)\>
e\ln{(nN})\>\|\M{A}\|_\infty.  \]
Square both sides and apply Markov's inequality (Theorem~\ref{t_Markov}) to the random variable
$Z \equiv \| \D{ \widehat{\M{A}}} - \D{ \M{A}}\|_2$ using 
$t \equiv \epsilon \| \D{ \M{A}}\|_2$,
\begin{align}\label{e_fourth2}
&\prob\left[ \| \D{ \widehat{\M{A}}} - \D{ \M{A}}\|_2 
\geq \epsilon \| \D{ \M{A}}\|_2 \right] \\
\nonumber& \qquad\qquad\leq \left(\sqrt{\frac{8e\ln{n}}{N}}  + \frac{8e\ln{n}}{N} \right)^2\>
(e \ln{(nN)})^2\>
\frac{\|\M{A}\|_\infty^2}{\epsilon^2\, \|\D{\M{A}}\|_{\infty}^2}.  
\end{align}
Substituting the assumption ${8e\ln{n}} \leq N \leq n$ 
into the relevant part of the above bound gives 
\begin{align*}
\left(\sqrt{\frac{8e\ln{n}}{N}}  + \frac{8e\ln{n}}{N} \right)^2\>
(e \ln{(nN)})^2
\leq\left(2 \sqrt{\frac{8e\ln{n}}{N}}\right)^2 (2e\ln{n})^2
=\frac{128\, (e\,\ln{n})^3}{N}.
\end{align*}
Substitute this, in turn, into (\ref{e_fourth2}), set
the failure probability equal to 
\begin{align*}
\delta\equiv \frac{128\, (e\,\ln{n})^3}{\epsilon^2\,N}\>
\left(\frac{\|\M{A}\|_\infty}{\|\D{\M{A}}\|_{\infty}}\right)^2 
\end{align*}
and solve for the sampling amount~$N$.
\end{proof}

\section{Componentwise bounds}\label{s_comp}
We present componentwise bounds for Monte Carlo estimators based on standard 
Rademacher vectors (section~\ref{s_radecomp}), as
well as on standard
(section~\ref{s_5gauss})
and normalized Gaussian vectors
(section~\ref{s_5normgauss}).

Our attempt at deriving alternative normwise bounds by applying a union bound 
over the 
componentwise bounds for all diagonal elements did not produce results that were substantially tighter than our previous normwise bounds.

\subsection{Standard Rademacher 
vectors}\label{s_radecomp}
We present a componentwise worst case absolute error
bound (Corollary~\ref{c:rademacher1}), and a
bound on the minimal sampling amount that makes the Rademacher Monte Carlo
estimator a \textit{componentwise} ($\epsilon, \delta)$ diagonal estimator (Theorem~\ref{thm:rad:component}).

Theorem~\ref{thm:rademacher} 
is identical to the following worst case componentwise  bound.

\begin{corollary}\label{c:rademacher1}
Let $\M{A}\in \R^{n\times n}$ be non-diagonal symmetric, and let
\[K_1\equiv \|\D{\M{A}^2}-\D{\M{A}}^2\|_2,
\quad K_2 \equiv \|\M{A}-\D{\M{A}}\|_\infty, \quad  
d\equiv \frac{1}{K_1}\|\M{A}-\D{\M{A}}\|_F^2. \]
If $\widehat{\M{A}} \equiv\frac1N\sum_{k=1}^N \M{Aw}_k \V{w}_k^\top$ is a Monte Carlo estimator
with independent Rademacher vectors $\B{w}_k \in \R^n$, $1\leq k\leq N$, then
the probability that
the absolute error exceeds $t>0$ is at most
\[ \myP\left[\max_{1\leq i\leq n}{|a_{ii}-\widehat{a}_{ii}|} 
\geq t \right] \leq 8d\, 
\exp\left(\frac{-Nt^2}{2(K_1 + tK_2/3)}\right). \]
\end{corollary}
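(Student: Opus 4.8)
The plan is to recognize that this corollary is not a new result but a verbatim restatement of Theorem~\ref{thm:rademacher}, exploiting the elementary fact that the two-norm of a diagonal matrix equals the largest magnitude among its diagonal entries. First I would observe that $\D{\M{A}} - \D{\widehat{\M{A}}}$ is itself a diagonal matrix whose $(i,i)$ entry is exactly $a_{ii} - \widehat{a}_{ii}$. Since for any diagonal matrix the two-norm coincides with the maximal magnitude of its diagonal entries, we obtain the identity
\[
\|\D{\M{A}} - \D{\widehat{\M{A}}}\|_2 = \max_{1\leq i\leq n} |a_{ii} - \widehat{a}_{ii}|.
\]
The two events $\{\|\D{\M{A}} - \D{\widehat{\M{A}}}\|_2 \geq t\}$ and $\{\max_i |a_{ii} - \widehat{a}_{ii}| \geq t\}$ are therefore literally the same event, so the probabilities on the two sides agree.

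Next I would reconcile the two expressions for the intrinsic dimension $d$, since $K_1$ and $K_2$ are defined identically in the two statements but the formula for $d$ differs in appearance. Theorem~\ref{thm:rademacher} uses $d = (\|\M{A}\|_F^2 - \|\D{\M{A}}\|_F^2)/K_1$, whereas the corollary writes $d = \|\M{A} - \D{\M{A}}\|_F^2/K_1$. These coincide because the diagonal part $\D{\M{A}}$ and the off-diagonal part $\M{A} - \D{\M{A}}$ have disjoint support, hence are orthogonal in the Frobenius inner product; Pythagoras then gives $\|\M{A}\|_F^2 = \|\D{\M{A}}\|_F^2 + \|\M{A} - \D{\M{A}}\|_F^2$, so that $\|\M{A}\|_F^2 - \|\D{\M{A}}\|_F^2 = \|\M{A} - \D{\M{A}}\|_F^2$.

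With the probability and the constant $d$ both matching, the claimed bound follows immediately by substitution into Theorem~\ref{thm:rademacher}. I do not expect any genuine obstacle here: the only step requiring a word of justification is the Frobenius-norm orthogonality identity, which is routine. The entire content of the corollary is the observation that a normwise two-norm bound on a \emph{diagonal} error matrix is, without any additional work, a worst-case bound over all individual diagonal components.
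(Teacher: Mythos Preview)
Your proposal is correct and matches the paper's own one-line proof, which simply invokes Theorem~\ref{thm:rademacher} together with the fact that the $p$-norm of a diagonal matrix equals the largest magnitude of its diagonal entries. Your additional remark reconciling the two expressions for $d$ via Frobenius orthogonality of the diagonal and off-diagonal parts is a detail the paper omits but is exactly right.
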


\begin{proof}
In Theorem~\ref{thm:rademacher}, the $p$-norm of the diagonal matrix 
$\D{\M{A}} -\D{\widehat{\M{A}}}$ is a largest
magnitude diagonal element.
\end{proof}

In contrast to Corollary~\ref{c:rademacher1}, the 
next bound depends on the particular diagonal element. We first require a lemma on the independence of products of Rademacher variables. 

\begin{lemma}\label{l5_rad}
    Let $Z_1,W_1,W_2,\ldots,W_n$ be independent Rademacher variables. Then the products 
    $X_1 \equiv ZW_1,\ldots, X_n \equiv ZW_n$
    are also independent Rademacher variables.
\end{lemma}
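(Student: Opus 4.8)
The plan is to show that $X_1, \ldots, X_n$ are jointly independent Rademacher variables by verifying that their joint probability mass function factors as the product of the marginals, each marginal being $1/2$ on $\{-1, +1\}$. First I would confirm that each individual $X_i = ZW_i$ is itself a Rademacher variable: since $Z, W_i \in \{-1, +1\}$, the product $X_i$ also lies in $\{-1, +1\}$, and by conditioning on $Z$ (or by a direct symmetry argument), $\prob[X_i = 1] = \prob[Z = W_i] = 1/2$. The core of the proof is the joint statement, so the marginal check is really just a consequence of the joint computation.

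The main computation is to fix an arbitrary sign pattern $x_1, \ldots, x_n \in \{-1, +1\}$ and evaluate $\prob[X_1 = x_1, \ldots, X_n = x_n]$. The natural device is to condition on the value of the shared variable $Z$, using the law of total probability:
\begin{equation*}
\prob[\forall i:\, ZW_i = x_i] = \sum_{z \in \{-1, +1\}} \prob[\forall i:\, W_i = z x_i] \,\prob[Z = z].
\end{equation*}
Because $Z$ is $\pm 1$, conditioning on $Z = z$ turns each event $\{ZW_i = x_i\}$ into $\{W_i = z x_i\}$, which depends only on $W_i$. Using the mutual independence of $W_1, \ldots, W_n$ (which are independent of $Z$), each conditional joint probability factors into $\prod_i \prob[W_i = z x_i] = 2^{-n}$, regardless of $z$ and of the particular signs $x_i$. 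Summing over $z$ with weights $\prob[Z = z] = 1/2$ yields $2^{-n}$ for every sign pattern. Since $2^{-n} = \prod_{i=1}^n \prob[X_i = x_i]$ (each marginal being $1/2$), the joint mass function equals the product of the marginals, establishing independence.

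The step requiring the most care is the conditioning manipulation: one must justify that conditioning on $Z = z$ replaces $ZW_i$ by $z W_i$ and that the resulting events, now functions of the $W_i$ alone, are independent of each other and of the conditioning event. This rests on the hypothesis that $Z, W_1, \ldots, W_n$ are \emph{mutually} (not merely pairwise) independent. A subtle point worth flagging is that independence of the products $X_i$ is genuinely a \emph{joint} property and does not follow from pairwise checks alone; the shared factor $Z$ could in principle induce higher-order dependence, and it is precisely the clean factorization above that rules this out. I do not anticipate any further obstacle, as the remaining arithmetic ($2^{-n}$ bookkeeping and the product-of-marginals comparison) is routine once the conditioning identity is in place.
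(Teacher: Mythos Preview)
Your proposal is correct and follows essentially the same approach as the paper: condition on the shared variable $Z$ via the law of total probability, use the mutual independence of the $W_i$ to reduce each conditional joint probability to $2^{-n}$, and observe that this equals the product of the marginals. Your additional remarks on the marginal distribution of each $X_i$ and on the need for mutual (not merely pairwise) independence are welcome elaborations but do not alter the underlying argument.
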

\begin{proof}
 For any $x_1,\ldots,x_n \in \{-1,+1\}$, using the law of total probability, the joint probability mass function satisfies 
 \begin{align*}
     \mathbb{P}\left[\cap_{i=1}^n\,\{X_i=x_i\}\right] &= \sum_{z\in \{-1,+1\}}
     \mathbb{P}\left[\cap_{i=1}^n\, \{X_i=x_i\} | Z = z\right]\mathbb{P}[Z = z]\\
     &= \frac{1}{2}\mathbb{P}\left[\cap_{i=1}^n\,\{W_i = -x_i\}\right]
     + \frac{1}{2}\mathbb{P}\left[\cap_{i=1}^n\,\{W_i = x_i\}\right]\\
     &= 2^{-n} = \prod_{i=1}^n\mathbb{P}\left[X_i=x_i\right].  
 \end{align*}
 Since the joint PMF factorizes, the variables $X_1,\ldots,X_n$ are independent. 
\end{proof}

\begin{theorem}\label{thm:rad:component}
Let $\M{A}\in \R^{n\times n}$ be non-diagonal  symmetric,
and let
\[\widehat{\M{A}} \equiv\frac1N\sum_{k=1}^N \M{Aw}_k \V{w}_k^\top\] be a Monte Carlo estimator
with independent Rademacher vectors $\B{w}_k \in \R^n$, $1\leq k\leq N$.
The probability that the absolute error
exceeds $t>0$ is at most
 \begin{equation}
\prob\left[\left|\widehat{a}_{ii} - a_{ii}\right| \geq t\right] \leq 2\exp\left(\frac{-N t^2}{2\,(\|\V{a}_i\|_2^2-a_{ii}^2)}\right), \qquad 1\leq j\leq n.
 \end{equation}
\end{theorem}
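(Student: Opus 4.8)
The plan is to reduce the componentwise error to a weighted sum of \emph{independent} Rademacher variables and then invoke the scalar Hoeffding inequality (Theorem~\ref{t_Hoeffding}). First I would write out the $i$th diagonal entry of the estimator as
\[
\widehat{a}_{ii} = \frac1N\sum_{k=1}^N (\M{Aw}_k)_i\,(\V{w}_k)_i
= \frac1N \sum_{k=1}^N \left(a_{ii}\,(\V{w}_k)_i^2 + (\V{w}_k)_i \sum_{j\neq i} a_{ij}\,(\V{w}_k)_j\right).
\]
By the constant-square property $(\V{w}_k)_i^2 = 1$ of Rademacher entries (Remark~\ref{r_rade}), the diagonal term collapses to $a_{ii}$ in every sample, so that $\expect[\widehat{a}_{ii}] = a_{ii}$ and the error takes the clean form
\[
\widehat{a}_{ii} - a_{ii} = \frac1N \sum_{k=1}^N \sum_{j\neq i} a_{ij}\, (\V{w}_k)_i (\V{w}_k)_j.
\]

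Next I would establish the independence structure that makes Hoeffding applicable. For each fixed sample $k$, apply Lemma~\ref{l5_rad} with $Z = (\V{w}_k)_i$ and $W_j = (\V{w}_k)_j$ for $j\neq i$: the products $Y_{kj} \equiv (\V{w}_k)_i (\V{w}_k)_j$ are themselves independent Rademacher variables. Because the vectors $\V{w}_k$ are independent across $k$, the full collection $\{Y_{kj} : 1\leq k\leq N,\ j\neq i\}$ consists of $N(n-1)$ independent Rademacher variables. The error is thus a weighted sum of these variables with weights $a_{ij}/N$, each summand bounded in the interval $[-|a_{ij}|/N,\, |a_{ij}|/N]$, so its range is $M-m = 2|a_{ij}|/N$.

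Finally I would apply Theorem~\ref{t_Hoeffding}. The sum of squared ranges is
\[
\sum_{k=1}^N \sum_{j\neq i} \left(\frac{2|a_{ij}|}{N}\right)^2
= \frac{4}{N^2}\, N \sum_{j\neq i} a_{ij}^2
= \frac{4}{N}\left(\|\V{a}_i\|_2^2 - a_{ii}^2\right),
\]
and substituting this into the Hoeffding bound yields exactly the stated inequality. The only nonroutine ingredient is the independence of the products $(\V{w}_k)_i(\V{w}_k)_j$ over $j$: naively these all share the common factor $(\V{w}_k)_i$, so their independence is not obvious and a direct appeal to Hoeffding would be illegitimate. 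This is precisely the obstacle that Lemma~\ref{l5_rad} removes; once it is in hand, the remaining steps are bookkeeping.
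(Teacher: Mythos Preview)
Your proposal is correct and follows essentially the same approach as the paper: split off the diagonal term via $(\V{w}_k)_i^2=1$, invoke Lemma~\ref{l5_rad} to obtain independence of the products $(\V{w}_k)_i(\V{w}_k)_j$ over $j\neq i$ (and across $k$), and apply Theorem~\ref{t_Hoeffding} with ranges $2|a_{ij}|/N$. Your identification of the independence subtlety and its resolution by Lemma~\ref{l5_rad} matches the paper's reasoning exactly.
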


\begin{proof}
Fix $i$, for some $1\leq i\leq n$.
The properties in Remark~\ref{r_rade} allow us
to split off the original diagonal element from the estimator,
\begin{align*}
\widehat{a}_{ii} &= \frac{1}{N}\sum_{k=1}^N\left(\M{A}\M{w}_k\M{w}_k^{\top}\right)_{ii} = 
 \frac{1}{N}\sum_{k=1}^N\sum_{j=1}^n{a_{ij}\,
 (\V{w}_k)_i\,(\V{w}_k)_j} \\
 &= a_{ii} + \sum_{k=1}^N\sum_{j\neq i}\underbrace{\frac{a_{ij}}{N}(\V{w}_k)_i\,(\V{w}_k)_j}_{Z_{kj}^{(i)}}
 .
 \end{align*}
Lemma \ref{l5_rad} implies that for fixed $i$, the $Z_{kj}^{(i)}$ are independent.
 Remark~\ref{r_rade} implies that they
 have zero mean, and are bounded by
 \begin{align*}
-\tfrac{|a_{ij}|}{N} \leq Z_{kj}^{(i)}
\leq \tfrac{|a_{ij}|}{N}, \qquad 1\leq k\leq N,\quad 1\leq j \leq n,\quad j\neq i. 
\end{align*}
 Hence the absolute error
$\widehat{a}_{ii}-a_{ii}$
is a sum of independent bounded zero-mean random variables, and we can apply
 Hoeffding's inequality in Theorem~\ref{t_Hoeffding}
 	\begin{equation*}
\prob\left[\left|\widehat{a}_{ii} - a_{ii}\right| \geq t\right] 
\leq 2\exp\left(\frac{-2t^2}{\sum_{k=1}^N\sum_{j\neq i}
\left(\frac{2}{N}{|a_{ij}|}\right)^2}\right)	= 2\exp\left(\frac{-Nt^2}{2\sum_{j\neq i}a_{ij}^2}\right).
	\end{equation*}
At last, write 
$\sum_{j\neq i}a_{ij}^2=
\|\V{a}_i\|_2^2-a_{ii}^2$.
 \end{proof}

Theorem~\ref{thm:rad:component} implies that the accuracy for estimating
a single diagonal element depends only on the magnitude of the off-diagonal elements in the corresponding row and column. 

 We determine the minimal sampling amount
required to make 
the Rademacher Monte Carlo estimator a componentwise ($\epsilon,\delta)$ 
diagonal estimator. {For symmetric matrices, this result coincides with the bound in Equation 40 of \cite{BN22} but uses a different proof technique.}

\begin{corollary}\label{c:radcomp}
Let $\M{A}\in \R^{n\times n}$ be non-diagonal symmetric,
and let
\[\widehat{\M{A}} \equiv\frac1N\sum_{k=1}^N \M{Aw}_k \V{w}_k^\top\] be a Monte Carlo estimator
with independent Rademacher vectors $\B{w}_k \in \R^n$, $1\leq k\leq N$.
  Pick $\epsilon > 0$, and a diagonal element $a_{ii}\neq 0$ of $\M{A}$. For any $0< \delta < 1$, if the sampling amount is at least 
 \[N\geq 
 \left(\frac{\|\V{a}_i\|_2^2-a_{ii}^2}{a_{ii}^2}\right)
 \frac{2\ln(2/\delta)}{\epsilon^2},\]
 then $|a_{jj}-\widehat{a}_{jj}|\leq \epsilon |a_{jj}|$ holds with probability at least $1-\delta$. 
\end{corollary}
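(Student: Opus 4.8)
The plan is to specialize Theorem~\ref{thm:rad:component} to the threshold $t=\epsilon\,|a_{ii}|$ and then invert the resulting tail bound to solve for~$N$. Since the diagonal element satisfies $a_{ii}\neq 0$ and $\epsilon>0$, this choice is admissible ($t>0$), and the failure event $|\widehat{a}_{ii}-a_{ii}|\geq \epsilon\,|a_{ii}|$ is precisely the complement of the desired componentwise relative error bound $|\widehat{a}_{ii}-a_{ii}|\leq \epsilon\,|a_{ii}|$.

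Substituting $t=\epsilon\,|a_{ii}|$ into Theorem~\ref{thm:rad:component} gives
\[\prob\left[|\widehat{a}_{ii}-a_{ii}|\geq \epsilon\,|a_{ii}|\right]\leq 2\exp\left(\frac{-N\,\epsilon^2\,a_{ii}^2}{2\,(\|\V{a}_i\|_2^2-a_{ii}^2)}\right).\]
I would then require the right-hand side to be at most the user-specified failure probability $\delta$, take logarithms, and isolate~$N$. This yields exactly the threshold $N\geq \bigl((\|\V{a}_i\|_2^2-a_{ii}^2)/a_{ii}^2\bigr)\,2\ln(2/\delta)/\epsilon^2$ stated in the corollary.

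Finally I would invoke monotonicity: the exponent is a decreasing function of~$N$, so for every $N$ at least the stated threshold the failure-probability bound remains at most~$\delta$, whence the success probability $\prob[|\widehat{a}_{ii}-a_{ii}|\leq \epsilon\,|a_{ii}|]$ is at least $1-\delta$.

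There is essentially no hard step here: the argument is a direct substitution followed by an algebraic inversion of an exponential. The only points requiring a moment of care are verifying that $t>0$ (guaranteed by $a_{ii}\neq 0$) so that Theorem~\ref{thm:rad:component} applies, and confirming the correct inequality direction when solving for~$N$ (larger $N$ produces a smaller failure probability). I expect no genuine obstacle beyond this bookkeeping.
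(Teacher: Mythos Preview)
Your proposal is correct and follows essentially the same route as the paper: both arguments specialize Theorem~\ref{thm:rad:component} to the relative-error threshold and invert the exponential tail bound to obtain the sampling requirement. The only cosmetic difference is that the paper first sets the tail bound equal to~$\delta$ and solves for~$t$, then imposes $t\le\epsilon|a_{ii}|$, whereas you substitute $t=\epsilon|a_{ii}|$ first and then bound the resulting failure probability by~$\delta$; these are algebraically equivalent.
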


\begin{proof}
Define the 2-norm offdiagonal column sums 
\begin{equation*}
\off_i\equiv (\|\V{a}_i\|_2^2 -a_{ii}^2)^{1/2},\qquad 1\leq i\leq n,
\end{equation*}
and denote the bound for the failure probability in Theorem \ref{thm:rad:component} by 
\[\delta \equiv 2\exp\left(\frac{-Nt^2}{2\,\off_i^2}\right), 
\]
and solve it for $t$, 
\[t = \sqrt{\frac{2\,\off_i^2}{N}\ln(2/\delta)}.\]
Restate Theorem \ref{thm:rad:component} in terms of the failure probability: With probability at most $1-\delta$, the absolute error of a specific diagonal element is bounded above by 
\[|a_{ii}-\widehat{a}_{ii}|\leq t = \sqrt{\frac{2\,\off_i^2}{N}\ln(2/\delta)}. \]
Converting this absolute error into a relative error requires $t\leq \epsilon |a_{ii}|$, which implies
\[N\geq 
\left(\frac{\off_i}{|a_{ii}|}\right)^2
\frac{2\ln(2/\delta)}{\epsilon^2}.\]
\end{proof}

The minimal sampling amount for
computing $a_{ii}$ with
the Rademacher Monte Carlo estimator
depends on
$\frac{\|\V{a}_i\|_2^2-a_{ii}^2}{|a_{ii}|}$, which
represents the relative 2-norm deviation of the $i$th column
and row of $\M{A}$ from diagonality. 
Thus, the more diagonal the $i$th row and column, the fewer samples are required for 
a $(\epsilon, \delta)$ estimator.

\subsection{Gaussian vectors}\label{s_5gauss}
We present a componentwise absolute error
bound (Theorem~\ref{thm:gauss:component})
for Gaussian Monte Carlo estimators,
and a bound on the minimal sampling amount that makes the Gaussian 
Monte Carlo estimator a componentwise $(\epsilon,\delta)$ estimators
(Corollary~\ref{c:gauss:component}).
Our bounds are derived from and identical to bounds for trace estimators in  \cite{cortinovis2021randomized}.

\begin{theorem}\label{thm:gauss:component}
Let $\M{A}\in\real^{n\times n}$ be non-diagonal symmetric,
\begin{equation*}
L_{1i}\equiv |a_{ii}|+\|\V{a}_i\|_2,\qquad
L_{i2}\equiv |a_{ii}|^2+\|\V{a}_i\|_2^2,
\qquad 1\leq i\leq n,
\end{equation*}
and let
$\widehat{\M{A}}\equiv \frac{1}{N}\sum_{k=1}^N\M{A}\M{z}_k\M{z}_k^\top \in \R^{n\times n}$ 
be a Monte Carlo estimator with 
independent Gaussian vectors
$\M{z}_k\sim\mathcal{N}(\V{0},\M{I})$ in $\R^n$, $1\leq k\leq N$. 
If $t > 0$, then
\begin{equation}\label{eqn:gauss_bound}
\prob\left[\left|\widehat{a}_{ii} - a_{ii}\right| > t\right] \leq 		2\exp\left(\frac{-Nt^2}{2(L_{i2} + t\,L_{i1})}\right), \quad 1\leq i\leq n.
	\end{equation}
\end{theorem}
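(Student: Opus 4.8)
The plan is to fix an index $i$ and recognize that the $i$th diagonal entry of the estimator is nothing but a Gaussian (Hutchinson) trace estimator applied to a particular rank-two matrix; the bound then follows by transferring the trace-estimator concentration result of~\cite{cortinovis2021randomized}. Since $\M{A}$ is symmetric, $(\M{A}\V{z}_k)_i (\V{z}_k)_i = \V{z}_k^\top (\V{a}_i\V{e}_i^\top)\V{z}_k$, and because a quadratic form only sees the symmetric part of its matrix, I would introduce the symmetrization
\[
\M{M}_i \equiv \tfrac12\left(\V{a}_i\V{e}_i^\top + \V{e}_i\V{a}_i^\top\right),
\]
so that $\widehat{a}_{ii} = \frac1N\sum_{k=1}^N \V{z}_k^\top\M{M}_i\V{z}_k$ is exactly the Gaussian trace estimator for $\trace(\M{M}_i)$, with $\expect[\widehat{a}_{ii}] = \trace(\M{M}_i) = a_{ii}$.

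The next step is to read off the three spectral quantities of $\M{M}_i$ that drive the trace-estimator bound. The matrix $\M{M}_i$ has rank at most two, with range spanned by $\V{a}_i$ and $\V{e}_i$; using $\V{a}_i^\top\V{e}_i = a_{ii}$ and $\|\V{e}_i\|_2 = 1$, its two nonzero eigenvalues are $\lambda_\pm = \tfrac12\left(a_{ii}\pm\|\V{a}_i\|_2\right)$. Consequently $\trace(\M{M}_i) = \lambda_+ + \lambda_- = a_{ii}$, and
\[
\|\M{M}_i\|_F^2 = \lambda_+^2 + \lambda_-^2 = \tfrac12\left(a_{ii}^2 + \|\V{a}_i\|_2^2\right) = \tfrac12\, L_{i2}.
\]
Since $\|\V{a}_i\|_2^2 = \sum_{j} a_{ij}^2 \geq a_{ii}^2$, the eigenvalue of largest magnitude gives $\|\M{M}_i\|_2 = \max(|\lambda_+|,|\lambda_-|) = \tfrac12\left(|a_{ii}| + \|\V{a}_i\|_2\right) = \tfrac12\, L_{i1}$, independent of the sign of $a_{ii}$.

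Finally I would invoke the Gaussian trace-estimator concentration inequality of~\cite{cortinovis2021randomized}, which for a symmetric $\M{B}$ has the Bernstein form
\[
\prob\left[\left|\tfrac1N\sum_{k=1}^N\V{z}_k^\top\M{B}\V{z}_k - \trace(\M{B})\right| > t\right] \leq 2\exp\left(\frac{-Nt^2}{4\|\M{B}\|_F^2 + 4t\|\M{B}\|_2}\right),
\]
and substitute $\M{B} = \M{M}_i$ together with $4\|\M{M}_i\|_F^2 = 2L_{i2}$ and $4\|\M{M}_i\|_2 = 2L_{i1}$ to recover (\ref{eqn:gauss_bound}). The main obstacle is purely the spectral bookkeeping of the middle step: diagonalizing the non-symmetric outer product $\V{a}_i\V{e}_i^\top$ through its symmetric part and verifying that $\|\M{M}_i\|_F$ and $\|\M{M}_i\|_2$ collapse to exactly $\tfrac12 L_{i2}$ and $\tfrac12 L_{i1}$, so that the borrowed constants combine into the stated denominator $2(L_{i2}+tL_{i1})$. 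If a self-contained derivation is preferred over citing~\cite{cortinovis2021randomized}, the same inequality follows by writing $\V{z}_k^\top\M{M}_i\V{z}_k = \sum_j\lambda_j\, g_{kj}^2$ with i.i.d. $g_{kj}\sim\mathcal{N}(0,1)$, bounding the moment generating function of the centered sum over the $N$ samples (each summand is sub-exponential with variance proxy $2\lambda_j^2$ and scale $2|\lambda_j|$, so the per-sample variance is $2\|\M{M}_i\|_F^2 = L_{i2}$ and the scale is $2\|\M{M}_i\|_2 = L_{i1}$), and optimizing the Chernoff exponent over the free parameter.
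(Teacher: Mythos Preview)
Your proposal is correct and is essentially the same proof as the paper's: your $\M{M}_i=\tfrac12(\V{a}_i\V{e}_i^\top+\V{e}_i\V{a}_i^\top)$ is exactly the matrix the paper calls $\M{B}_i$, and both arguments identify $\widehat{a}_{ii}$ as the Gaussian trace estimator for $\trace(\M{B}_i)=a_{ii}$ and invoke \cite[Theorem~1]{cortinovis2021randomized} with $\|\M{B}_i\|_F^2=\tfrac12 L_{i2}$ and $\|\M{B}_i\|_2=\tfrac12 L_{i1}$. The only cosmetic difference is that you compute these norms via the two eigenvalues $\lambda_\pm=\tfrac12(a_{ii}\pm\|\V{a}_i\|_2)$, whereas the paper simply states them.
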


\begin{proof}
Fix $i$ for some $1\leq i\leq n$.
Write the diagonal element as an inner product
 \[(\M{A}\M{z}_k\M{z}_k^\top)_{ii} = \sum_{j=1}^n{a_{ij}(\M{z}_k)_i(\M{z}_k)_j} = 
 \M{z}_k^\top \M{B}_i\M{z}_k,\qquad 1\leq k\leq N\]
involving the symmetric matrix
  	\[\M{B}_i \equiv \begin{bmatrix}
		0& & \frac{1}{2}a_{1i} & & 0\\
		& & \vdots && \\
		\frac{1}{2}a_{i1} &\cdots & a_{ii} & \cdots & \frac{1}{2}a_{in}\\
		& & \vdots && \\
		0& & \frac{1}{2}a_{ni} & & 0\\
\end{bmatrix}\in\rnn \quad \text{with}\quad \trace(\M{B}_i)=a_{ii}.\]
We can think of
\begin{equation*}
\widehat{a}_{ii}=
\left(\frac{1}{N}\sum_{k=1}^N{\M{A}\V{z}_k\V{z}_k^\top}\right)_{ii}=
\frac{1}{N}\sum_{k=1}^N{\left(\M{A}\V{z}_k\V{z}_k^\top\right)_{ii}}=
\frac{1}{N}\sum_{k=1}^N
{\M{z}_k^\top \M{B}_i\M{z}_k}
\end{equation*}
as a Monte Carlo estimator
for $\trace(\M{B}_i)$, and apply
the bound for Gaussian trace estimators
\cite[Theorem~1]{cortinovis2021randomized} 
	\begin{equation*}
	\prob\left[\left|\widehat{a}_{ii} - a_{ii}\right| \geq t\right] \leq 		2\exp\left(\frac{-Nt^2}{4\|\M{B}_i\|_F^2 + 4t\|\M{B}_i\|_2}\right),
	\end{equation*}
where	 
	\[ \|\M{B}_i\|_F^2 = \frac{1}{2}\left(|a_{ii}|^2 + \|\M{a}_i\|_2^2\right)=\frac{L_{i2}}{2},\qquad \|\M{B}_i\|_2 = \frac{1}{2}\left(|a_{ii}| + \|\M{a}_i\|_2 \right)=\frac{L_{i1}}{2}.\]
\end{proof}

Theorem~\ref{thm:gauss:component}
 implies that with Gaussian vectors, the accuracy for estimating
a single diagonal element depends on the magnitude of  all elements in the corresponding row and column. By contrast, the bounds of Theorem~\ref{thm:rad:component} for Rademacher vectors depend only on the magnitude of the off-diagonal elements. 

We determine the minimal sampling amount required to make the 
Gaussian Monte Carlo estimator a componentwise $(\epsilon,\delta)$ estimator.

\begin{corollary}\label{c:gauss:component}
Let $\M{A}\in\real^{n\times n}$ be non-diagonal symmetric,
\begin{equation*}
\Delta_{1i}\equiv 1+\frac{\|\V{a}_i\|_2}{|a_{ii}|},\qquad
\Delta_{i2}\equiv 1+\left(\frac{\|\V{a}_i\|_2}{|a_{ii}|}\right)^2,
\qquad 1\leq i\leq n,
\end{equation*}
and let
$\widehat{\M{A}}\equiv \frac{1}{N}\sum_{k=1}^N\M{A}\M{z}_k\M{z}_k^\top \in \R^{n\times n}$
be a Monte Carlo estimator with 
independent Gaussian vectors
$\M{z}_k\sim\mathcal{N}(\V{0},\M{I})$ in $\R^n$, $1\leq k\leq N$. 
Pick $\epsilon>0$, and a diagonal element $a_{ii}\neq 0$ of $\M{A}$. For any $0<\delta<1$, if the sampling amount is at
least
\begin{equation*}
N\geq (\Delta_{2i}+\Delta_{1i}\epsilon)\,\frac{2\ln{(2/\delta)}}{\epsilon^2}
\end{equation*}
then $|\widehat{a}_{ii} - a_{ii}| \leq \epsilon\,|a_{ii}|$ holds with probability
at least $1-\delta$.
\end{corollary}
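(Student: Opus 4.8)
The plan is to convert the tail bound of Theorem~\ref{thm:gauss:component} into a sampling-amount bound in exactly the manner that Corollary~\ref{c:rademacher3} converts Theorem~\ref{thm:rademacher3}, and that Corollary~\ref{c:radcomp} converts Theorem~\ref{thm:rad:component}. First I would fix the index $i$, set the right-hand side of the tail bound (\ref{eqn:gauss_bound}) equal to the target failure probability,
\[\delta = 2\exp\left(\frac{-Nt^2}{2(L_{i2} + t\,L_{i1})}\right),\]
and solve the resulting quadratic $Nt^2 - 2L_{i1}\ln(2/\delta)\,t - 2L_{i2}\ln(2/\delta) = 0$ for the threshold $t$, retaining the positive root
\[t = \frac{L_{i1}\ln(2/\delta)}{N} + \sqrt{\frac{L_{i1}^2\ln^2(2/\delta)}{N^2} + \frac{2L_{i2}\ln(2/\delta)}{N}}.\]
Restated this way, Theorem~\ref{thm:gauss:component} guarantees $|\widehat{a}_{ii} - a_{ii}| \leq t$ with probability at least $1-\delta$.

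Second, I would turn this absolute error bound into a relative one by imposing $t \leq \epsilon|a_{ii}|$. Dividing through by $|a_{ii}|$ and introducing the abbreviations
\[\gamma \equiv \frac{L_{i1}\ln(2/\delta)}{N|a_{ii}|}, \qquad \beta \equiv \frac{2L_{i2}}{|a_{ii}|\,L_{i1}},\]
the requirement reduces to the scalar inequality $\gamma + \sqrt{\gamma^2 + \beta\gamma} \leq \epsilon$, identical in form to the one solved in the proof of Corollary~\ref{c:rademacher3}. Isolating the square root and squaring then collapses this to $\gamma(\beta + 2\epsilon) \leq \epsilon^2$, that is, $\gamma \leq \epsilon^2/(\beta + 2\epsilon)$.

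Third, I would translate $\gamma \leq \epsilon^2/(\beta + 2\epsilon)$ back into a lower bound on $N$, substitute the value of $\beta$, and collect terms to obtain
\[N \geq \frac{2\ln(2/\delta)}{\epsilon^2}\left(\frac{L_{i2}}{a_{ii}^2} + \frac{L_{i1}}{|a_{ii}|}\,\epsilon\right).\]
The closing step is purely definitional: with $L_{i2} = a_{ii}^2 + \|\V{a}_i\|_2^2$ and $L_{i1} = |a_{ii}| + \|\V{a}_i\|_2$, I recognize $L_{i2}/a_{ii}^2 = \Delta_{i2}$ and $L_{i1}/|a_{ii}| = \Delta_{1i}$, which yields the claimed bound $N \geq (\Delta_{i2} + \Delta_{1i}\epsilon)\,2\ln(2/\delta)/\epsilon^2$.

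I do not anticipate a genuine obstacle here, since the argument is structurally the same as that of Corollary~\ref{c:rademacher3}, with the dimension-free scalars $L_{i1}, L_{i2}$ playing the roles of $K_2, K_1$ and the prefactor $8d(s)$ collapsing to $2$. The only point requiring mild care is verifying the sign condition $\epsilon \geq \gamma$ before squaring the square-root inequality; this holds automatically for any $N$ large enough to meet the final bound, so it imposes no extra restriction.
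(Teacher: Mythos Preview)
Your proof is correct. The derivation cleanly converts the tail bound of Theorem~\ref{thm:gauss:component} into the stated sampling requirement, and every algebraic step checks out; the sign condition $\epsilon\geq\gamma$ that you flag is indeed automatically satisfied once $N$ meets the final bound, since then $\gamma\leq\epsilon^2/(\beta+2\epsilon)<\epsilon$.

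The route differs from the paper's, however. The paper does not repeat the $\gamma$--$\beta$ inversion at all; it simply observes that the sampling bound is already contained in \cite[Theorem~1]{cortinovis2021randomized}. Recall from the proof of Theorem~\ref{thm:gauss:component} that $\widehat{a}_{ii}$ is the Gaussian Hutchinson trace estimator for the auxiliary matrix $\M{B}_i$, with $\|\M{B}_i\|_F^2=L_{i2}/2$ and $\|\M{B}_i\|_2=L_{i1}/2$. Cortinovis and Kressner's Theorem~1 packages both the tail bound \emph{and} the corresponding $(\epsilon,\delta)$ sampling bound, so the corollary follows by a second appeal to that same theorem rather than by re-deriving the inversion. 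What you have done is essentially reproduce, within the present paper's notation, the argument that \cite{cortinovis2021randomized} carries out once and for all; this buys a self-contained proof that does not send the reader to an external reference, at the cost of repeating algebra already performed in the proof of Corollary~\ref{c:rademacher3}. Either route is perfectly valid.
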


\begin{proof}
This follows immediately from the lower bound for $N$ in \cite[Theorem~1]{cortinovis2021randomized}. 
\end{proof}

The required sampling amount for computing $a_{ii}$ with the Gaussian Monte Carlo
estimator depends on $\|\V{a}_i\|_2/|a_{ii}|$ which can be interpreted as the 2-norm
derivation of the $i$th column and row of $\M{A}$ from diagonality. The more diagonal
the $i$th row and column, the smaller the sampling amount for the $(\epsilon,\delta)$ estimator.

\subsection{Normalized Gaussian vectors}\label{s_5normgauss}
We extend and complete the analysis 
in~\cite{bekas2007estimator} for a Monte Carlo estimator based on normalized
Gaussian vectors,
\begin{equation}\label{e_5MC}
\widehat{\M{A}} \equiv \left(\sum_{k=1}^N\M{A}\M{z}_k\M{z}_k^\top\right) \oslash \left(\sum_{k=1}^N\M{z}_k\M{z}_k^\top\right),
\end{equation}
where $\M{z}_k \in \R^n$ are independent random
vectors, and $\oslash$ denotes elementwise division. 
We derive the 
distribution of the componentwise absolute errors  (Lemma~\ref{l_5bound5}),
followed by a bound (Theorem~\ref{t_5bound6}).

We represent the distribution for the absolute errors in the
diagonal elements in terms
of a \textit{Student $t$-distribution} with $N\geq 1$ degrees of freedom \cite[Definition 7.3.3]{Larsen2006},
\begin{equation} \label{eqn:studentt}
T_N\equiv \frac{Z}{\sqrt{U/N}},
\end{equation}
where $Z$ is a Gaussian $\mathcal{N}(0,1)$
random variable, and $U$ an independent chi-square random variable
with $N$ degrees of freedom.

\begin{lemma}\label{l_5bound5}
Let $\M{A}\in\rnn$ be symmetric, and (\ref{e_5MC})
be a Monte Carlo estimator
where $\M{z}_k\sim\mathcal{N}(\V{0},\M{I})$
in $\R^n$ are independent Gaussian random
vectors, $1\leq k\leq N$.
The absolute errors in the diagonal elements
are distributed as 
    \begin{equation*}\label{eqn:tdist_err}
\widehat{a}_{ii} - a_{ii}\  \sim\  \sqrt{\frac{\|\V{a}_i\|_2^2 - a_{ii}^2}{N}}\, T_N, 
			\qquad 1\leq i\leq n.
		\end{equation*}
\end{lemma}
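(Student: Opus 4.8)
The plan is to fix an index $i$ and compute the exact distribution of the $i$th diagonal element of the normalized estimator \eqref{e_5MC}. Writing $(\V z_k)_i$ for the $i$th entry of the $k$th Gaussian vector, the $(i,i)$ entry of \eqref{e_5MC} is a ratio of sums,
\[
\widehat a_{ii} = \frac{\sum_{k=1}^N (\M A \V z_k)_i\,(\V z_k)_i}{\sum_{k=1}^N (\V z_k)_i^2},
\]
so that
\[
\widehat a_{ii} - a_{ii} = \frac{\sum_{k=1}^N \big((\M A\V z_k)_i - a_{ii}(\V z_k)_i\big)(\V z_k)_i}{\sum_{k=1}^N (\V z_k)_i^2}.
\]
The numerator involves $(\M A\V z_k)_i - a_{ii}(\V z_k)_i = \sum_{j\neq i} a_{ij}(\V z_k)_j$, which depends only on the entries of $\V z_k$ \emph{other} than the $i$th. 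The key structural observation is therefore that, conditioning on $(\V z_k)_i$ for all $k$, the quantity $\sum_{j\neq i} a_{ij}(\V z_k)_j$ is a Gaussian independent of the conditioning variables.

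First I would condition on the scalars $u_k \equiv (\V z_k)_i$, $1\leq k\leq N$. Conditionally, each $X_k \equiv \sum_{j\neq i} a_{ij}(\V z_k)_j$ is a centered Gaussian with variance $\sum_{j\neq i}a_{ij}^2 = \|\V a_i\|_2^2 - a_{ii}^2$, and the $X_k$ are mutually independent and independent of the $u_k$. Hence the conditional numerator $\sum_k X_k u_k$ is Gaussian with mean $0$ and variance $(\|\V a_i\|_2^2 - a_{ii}^2)\sum_k u_k^2$. Dividing by the denominator $\sum_k u_k^2$, the conditional distribution of $\widehat a_{ii}-a_{ii}$ is $\mathcal N\!\big(0,\,(\|\V a_i\|_2^2-a_{ii}^2)/\sum_k u_k^2\big)$. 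Introducing a standard normal $Z$ I can write this conditionally as
\[
\widehat a_{ii} - a_{ii}\ \stackrel{d}{=}\ \sqrt{\frac{\|\V a_i\|_2^2 - a_{ii}^2}{\sum_{k=1}^N u_k^2}}\; Z,
\]
where $Z$ is independent of the $u_k$.

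The final step is to recognize the Student $t$ structure. Since $u_k=(\V z_k)_i$ are i.i.d.\ standard normals, $U\equiv \sum_{k=1}^N u_k^2$ is a chi-square random variable with $N$ degrees of freedom, and it is independent of the auxiliary $Z$. Rewriting the displayed expression as
\[
\widehat a_{ii} - a_{ii}\ \stackrel{d}{=}\ \sqrt{\frac{\|\V a_i\|_2^2 - a_{ii}^2}{N}}\cdot\frac{Z}{\sqrt{U/N}},
\]
I identify $Z/\sqrt{U/N}$ with the Student $t$-distribution $T_N$ of \eqref{eqn:studentt}, which gives the claimed law. The main obstacle is the justification of the conditional-independence claim, namely that given all the $u_k=(\V z_k)_i$, the $X_k$ are genuinely independent centered Gaussians with the stated variance and independent of the conditioning variables; this rests on the independence of distinct coordinates of a standard Gaussian vector and on the fact that linear combinations of independent Gaussians are Gaussian, and it must be checked that the independence of $Z$ (built from the numerator) and $U$ (built from the $u_k$) genuinely holds so that the $t$-distribution identification is valid.
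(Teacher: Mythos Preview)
Your proof is correct and follows essentially the same approach as the paper: write the error as a ratio, condition to show the standardized numerator is a standard normal independent of the chi-square denominator, and identify the Student $t$ structure. The only minor difference is that the paper first normalizes $(\V z_k)_i$ into a unit direction vector and invokes rotational invariance of the Gaussian to decouple direction from radius, whereas you condition directly on the raw values $u_k=(\V z_k)_i$ and use coordinate independence, which is a slightly more elementary justification of the same independence.
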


\begin{proof}
Due to the normalization in the 
denominator, we can extract the diagonal elements of $\M{A}$ from the diagonal elements 
of the Monte Carlo estimator $\widehat{\M{A}}$,
\begin{equation*}\label{eqn:bekas_estimator}
\widehat{a}_{ii} = \frac{\sum_{k=1}^N{\sum_{j=1}^n{a_{ij}(\M{z}_k)_i
(\M{z}_k)_j}}}{\sum_{k=1}^N{(\M{z}_k)_i^2}} = a_{ii} + 
\frac{\sum_{k=1}^N{\sum_{j\neq i}
{a_{ij}\,(\M{z}_k)_i(\M{z}_k)_j}}}{\sum_{k=1}^N{(\M{z}_k)_i^2}}, \qquad 1\leq i\leq n.
\end{equation*}
Normalize across the $i$th elements of the Gaussian vectors $\V{z}_k$ to unit vectors $\M{u}_i \in \mathbb{R}^N$ 
with elements
\[(\M{u}_i)_k \equiv \frac{(\M{z}_k)_i}{\sqrt{\sum_{k'=1}^N{(\M{z}_{k'})_i^2}}},
\qquad 1\leq k\leq N, \quad 1\leq i\leq n.
\]
Use the denominator to normalize the $i$th component in the $i$th 
absolute error,
\begin{equation}\label{e_nG1}
\widehat{a}_{ii}-a_{ii} = \frac{\sum_{k=1}^N\sum_{j\neq i}
{a_{ij}\,(\M{z}_k)_j(\M{u}_i)_k}}
{\sqrt{\sum_{k'=1}^N{(\M{z}_{k'})_i^2}}},
\qquad 1\leq i\leq n.
\end{equation}

The rotational invariance of the  standard Gaussian distribution guarantees the
independence of the direction vectors
$\M{u}_i$ and radial components $\sqrt{\sum_{k=1}^N(\M{z}_k)_i^2}$; see~\cite[Exercise  3.3.6]{vershynin2018high}.
Hence, the numerator and denominator in (\ref{e_nG1})
are independent. We can rewrite~\eqref{e_nG1} as 
\[ \widehat{a}_{ii}-a_{ii} = \frac{Z_i}{\sqrt{U_i/N}},\]
where we define the random variables $Z_i \equiv\sum_{k=1}^N\sum_{j\neq i}
{a_{ij}\,(\M{z}_k)_j(\M{u}_i)_k}/\sqrt{N}$ and $U_i \equiv \sum_{k=1}^N(\M{z}_k)_i^2$ for $1 \leq i \leq n$. 
The random variable $U_i$ has a chi-square distribution with $N$ degrees of freedom. 
The conditional distribution of $Z_i$ given $\M{u}_i$ is Gaussian (see e.g.~Exercise 3.3.3(a) of \cite{vershynin2018high}), with zero mean and variance 
\[\frac1N\sum_{k=1}^N \sum_{j\neq i}a_{ij}^2(\B{u}_i)_k^2 = \frac1N\left(\sum_{j\neq i}a_{ij}^2\right)\left(\sum_{k=1}^N(\B{u}_i)_k^2\right) = \frac1N\sum_{j\neq i}a_{ij}^2 = \frac1N( \|\M{a}_i\|_2^2-a_{ii}^2).\]
Therefore, $Z_i | \B{u}_i \sim \mathcal{N}(0,\frac1N (\|\M{a}_i\|_2^2-a_{ii}^2))$. However, the conditional distribution is independent of $\B{u}_i$, so this is also the unconditional distribution. The claim then follows from~\eqref{eqn:studentt}.
\end{proof}

\begin{remark}
{
It was observed in \cite{BN22} that the square of the error $(\widehat{a}_{ii}-a_{ii})^2$ has a scaled $F$-distribution. We note that the square of a Student $t$-distribution is specifically a scaled $F$-distribution with one degree of freedom in the numerator. Moreover, for a single sample $N=1$ the error has a Cauchy distribution, which has undefined mean and variance.}
\end{remark}

If $N$ is large, then the $t$-distribution $T_N$ can be approximated by a standard normal distribution. However, $T_N$ has wider tails, thus somewhat weaker tail bounds. Existing tail bounds for the Student $t$-distribution 
imply the following concentration inequality
for  error bounds.

\begin{theorem}\label{t_5bound6}
Let $\M{A}\in\rnn$ be symmetric, and (\ref{e_5MC})
be a Monte Carlo estimator
where $\M{z}_k\sim\mathcal{N}(\V{0},\M{I})$
in $\R^n$ are independent Gaussian random
vectors, $1\leq k\leq N$.
For any $t > 0$, 
 \begin{equation*} \label{eqn:bekas_bound}
     \prob\left[|\widehat{a}_{ii} - a_{ii}| > t\right] \leq\sqrt{\frac{2\,(\|\V{a}_i\|_2^2 - a_{ii}^2)}{\pi N}}\,
     \frac{1}{t}\,
     \left(1 + \frac{t^2}{\|\V{a}_i\|_2^2 - a_{ii}^2}\right)^{-\frac{N-1}{2}},\qquad 1\leq i\leq n.
 \end{equation*}
\end{theorem}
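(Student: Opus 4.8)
The plan is to reduce the statement to a one-dimensional tail estimate for the Student $t$-distribution and then apply a sharp tail bound. By Lemma~\ref{l_5bound5}, the error is distributed as $\widehat{a}_{ii}-a_{ii}\sim \sqrt{(\|\V{a}_i\|_2^2-a_{ii}^2)/N}\;T_N$, where $T_N$ is the Student $t$ variable in~\eqref{eqn:studentt}. Writing $\sigma_i^2\equiv\|\V{a}_i\|_2^2-a_{ii}^2$ and setting the threshold $a\equiv t\sqrt{N}/\sigma_i$, the event $\{|\widehat{a}_{ii}-a_{ii}|>t\}$ is exactly $\{|T_N|>a\}$. Thus everything reduces to bounding $\prob[|T_N|>a]$ and then substituting $1/a=\sigma_i/(t\sqrt{N})$ and $a^2/N=t^2/\sigma_i^2$; this is pure bookkeeping that reproduces the prefactor $\sqrt{2(\|\V{a}_i\|_2^2-a_{ii}^2)/(\pi N)}\,t^{-1}$ and the factor $(1+t^2/\sigma_i^2)^{-(N-1)/2}$ in the claim.

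The core step is to establish the sharp symmetric tail bound $\prob[|T_N|>a]\le \sqrt{2/\pi}\,a^{-1}(1+a^2/N)^{-(N-1)/2}$, which may be cited as an existing Student $t$ tail inequality or derived directly. The cleanest self-contained route uses a monotonicity argument in place of a crude integral estimate. Let $c_N$ denote the $t$-density at the origin and consider $h(a)\equiv \frac{c_N}{a}(1+a^2/N)^{-(N-1)/2}-\prob[T_N>a]$. Differentiating, the two contributions combine so that $h'(a)=-\frac{c_N}{a^2}(1+a^2/N)^{-(N+1)/2}<0$, while $h(a)\to 0$ as $a\to\infty$ because both terms share the leading decay $\sim c_N N^{(N-1)/2}a^{-N}$. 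Hence $h$ is positive on $(0,\infty)$, giving the one-sided bound $\prob[T_N>a]\le c_N\,a^{-1}(1+a^2/N)^{-(N-1)/2}$, and doubling by symmetry of $T_N$ yields the two-sided version with constant $2c_N$.

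Finally I would pass from the $N$-dependent constant $2c_N$ to the universal $\sqrt{2/\pi}$ using the Gamma-function inequality $\Gamma((N+1)/2)/\Gamma(N/2)\le\sqrt{N/2}$, equivalently $c_N=\Gamma((N+1)/2)/(\sqrt{N\pi}\,\Gamma(N/2))\le 1/\sqrt{2\pi}$; that is, the $t$-density at zero never exceeds the Gaussian density at zero. I expect the main obstacle to be precisely obtaining this sharp constant: the elementary estimate $x/a\ge 1$ inside $\int_a^\infty(1+x^2/N)^{-(N+1)/2}dx$ only delivers the strictly larger, $N$-dependent factor $c_N N/(N-1)$, so the monotonicity refinement (or the cited existing tail bound) is essential. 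Everything else—the distributional identity from Lemma~\ref{l_5bound5} and the closing substitution—is routine.
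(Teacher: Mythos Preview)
Your proposal is correct and follows essentially the same route as the paper: invoke Lemma~\ref{l_5bound5} to reduce to a Student $t$ tail, bound the one-sided tail by $\tfrac{c_N}{a}(1+a^2/N)^{-(N-1)/2}$, use $c_N\le 1/\sqrt{2\pi}$, substitute, and double by symmetry. The only difference is cosmetic: the paper cites the tail inequality $1-F_N(x)<\tfrac{f_N(x)}{x}(1+x^2/N)$ from \cite[Theorem~3.1]{soms1976asymptotic}, whereas you supply the self-contained monotonicity argument for it (your computation $h'(a)=-\tfrac{c_N}{a^2}(1+a^2/N)^{-(N+1)/2}$ is exactly right and is in fact Soms' proof).
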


\begin{proof}
The probability density function of $T_N$ is \cite{soms1976asymptotic}
\[f_N(x)=c_N\left(1+\frac{x^2}{N}
\right)^{-\frac{N+1}{2}}
	\qquad \text{where}\quad c_N \equiv \frac{\Gamma((N+1)/2)}{\Gamma(N/2)\sqrt{N\pi}},\]	
	and $1/\pi \leq c_N\leq 1/\sqrt{2\pi}$. \
If $F_N(x)$ is the cumulative distribution 
function for $T_N$ then by
	\cite[Theorem~3.1]{soms1976asymptotic} 
	\begin{equation*}
	\begin{aligned}
\prob\left[(\widehat{a}_{ii} - a_{ii}) > \sqrt{\frac{\|\V{a}_i\|_2^2 - a_{ii}^2}{N}} x  \right] = & \> 1-F_N(x)<\frac{f_N(x)}{x}
\left(1+\frac{x^2}{N}\right) \\
 = & \> \frac{c_N}{x}\left(1+\frac{x^2}{N}
 \right)^{-\frac{N-1}{2}}.
 \end{aligned}
	\end{equation*}
Take $x = t\,\sqrt{\frac{N}{\|\V{a}_i\|_2^2 - a_{ii}^2}}$ and bound the upper tail with $c_N\leq \sqrt{\frac{1}{2\pi}}$, to obtain
\[\prob\left[ (\widehat{a}_{ii} - a_{ii}) > t   \right] \leq \frac{1}{ t}\sqrt{\frac{\|\V{a}_i\|_2^2 - a_{ii}^2}{2\pi N}} \left(1 + \frac{t^2}{\|\V{a}_i\|_2^2 - a_{ii}^2}\right)^{-\frac{N-1}{2}}.\] 
Since $T_N$ is symmetric about the origin, the lower tail has the same bound. Using a union bound  gives the desired inequality.
\end{proof}

We determine a sampling amount sufficient to make the normalized Gaussian Monte Carlo estimator a componentwise $(\epsilon,\delta)$ estimator.

\begin{corollary}
Let $\M{A}\in \mathbb{R}^{n\times n}$ be non-diagonal symmetric, and let 
\[\Psi_{i} \equiv \frac{|a_{ii}|}{(\|\V{a}_i\|_2^2-a_{ii}^2)^{1/2}}, \quad 1\leq i \leq n,\]
and let $\widehat{\M{A}}$ be defined as in \eqref{e_5MC}. Pick $\epsilon > 0$ and a diagonal element $a_{ii}\neq 0$ of $\M{A}$. For any $0<\delta < 1$, if the sampling amount is positive and at least 
\[N \geq 1 + 2\ln\left(\frac{\sqrt{2/\pi}}{\delta \epsilon \Psi_i}\right)/\ln(1+\epsilon^2\Psi_i^2),\]
then $|\widehat{a}_{ii}-a_{ii}|\leq \epsilon |a_{ii}|$ holds with probability at least $1-\delta$.
\end{corollary}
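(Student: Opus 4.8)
The plan is to start from the tail bound in Theorem~\ref{t_5bound6}, specialize the threshold to $t=\epsilon\,|a_{ii}|$, and rewrite the resulting expression purely in terms of the ratio $\Psi_i$. Abbreviating $\sigma_i^2\equiv\|\V{a}_i\|_2^2-a_{ii}^2$, so that $\Psi_i=|a_{ii}|/\sigma_i$, the substitution $\sigma_i=|a_{ii}|/\Psi_i$ turns the prefactor $\sqrt{2\sigma_i^2/(\pi N)}\,/(\epsilon|a_{ii}|)$ into $\sqrt{2/(\pi N)}\,/(\epsilon\Psi_i)$, and the decaying factor $(1+t^2/\sigma_i^2)^{-(N-1)/2}$ into $(1+\epsilon^2\Psi_i^2)^{-(N-1)/2}$. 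Theorem~\ref{t_5bound6} then yields
\[
\prob\bigl[|\widehat a_{ii}-a_{ii}|>\epsilon\,|a_{ii}|\bigr]
\le \sqrt{\frac{2}{\pi N}}\,\frac{1}{\epsilon\Psi_i}\,
\bigl(1+\epsilon^2\Psi_i^2\bigr)^{-\frac{N-1}{2}}.
\]

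Next I would obtain a sufficient condition by bounding $1/\sqrt{N}\le 1$, valid since $N\ge 1$; this only strengthens the requirement and replaces the right-hand side by $\sqrt{2/\pi}\,(\epsilon\Psi_i)^{-1}(1+\epsilon^2\Psi_i^2)^{-(N-1)/2}$. Requiring this upper bound to be at most $\delta$ and taking logarithms gives
\[
\frac{N-1}{2}\,\ln\bigl(1+\epsilon^2\Psi_i^2\bigr)
\ \ge\ \ln\!\left(\frac{\sqrt{2/\pi}}{\delta\,\epsilon\,\Psi_i}\right),
\]
and solving for $N$ produces exactly the stated lower bound.

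The only point requiring care is the monotonicity and sign bookkeeping when dividing through by $\ln(1+\epsilon^2\Psi_i^2)$: since $\epsilon,\Psi_i>0$, the base $1+\epsilon^2\Psi_i^2$ strictly exceeds $1$, so this logarithm is strictly positive and the direction of the inequality is preserved upon division. I also note that when $\ln(\sqrt{2/\pi}/(\delta\epsilon\Psi_i))\le 0$ the derived condition holds for every $N\ge 1$, which is why the statement merely requires $N$ to be positive. Beyond this routine algebra I expect no genuine obstacle, because the tail bound of Theorem~\ref{t_5bound6} already isolates the polynomial prefactor and the exponentially decaying term in precisely the form needed to solve for $N$.
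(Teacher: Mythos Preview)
Your proposal is correct and follows essentially the same route as the paper: set $t=\epsilon|a_{ii}|$ in Theorem~\ref{t_5bound6}, rewrite in terms of $\Psi_i$, drop the favorable factor $1/\sqrt{N}$ using $N\ge 1$, and then solve the resulting inequality for $N$. Your additional remarks on the sign of $\ln(1+\epsilon^2\Psi_i^2)$ and the trivial case when the right-hand logarithm is nonpositive are fine and do not deviate from the paper's argument.
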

\begin{proof}
 In Theorem \ref{t_5bound6}, set $t = \epsilon |a_{ii}|$. If the sampling number satisfies the desired bound, it follows that 
 \begin{align*}
     \prob\left[ (\widehat{a}_{ii} - a_{ii}) > t   \right] &\leq \sqrt{\frac{2\,(\|\V{a}_i\|_2^2 - a_{ii}^2)}{\pi N}}\,
     \frac{1}{t}\,
     \left(1 + \frac{t^2}{\|\V{a}_i\|_2^2 - a_{ii}^2}\right)^{-\frac{N-1}{2}}\\
     &= 
     \frac{\sqrt{2/\pi}}{\epsilon \Psi_i\sqrt{ N}} \left(1 + \epsilon^2\Psi_i^2\right)^{-\frac{N-1}{2}}\\
     & \leq \frac{\sqrt{2/\pi}}{\epsilon \Psi_i} \left(1 + \epsilon^2\Psi_i^2\right)^{-\frac{N-1}{2}}
 \end{align*}
where the final inequality holds since $N\geq 1$ by assumption. Set the failure probability $\delta$ to the right hand side as 
\[ \delta \equiv \frac{\sqrt{2/\pi}}{\epsilon \Psi_i} \left(1 + \epsilon^2\Psi_i^2\right)^{-\frac{N-1}{2}},\]
and solve for $N$. 
\end{proof}
The larger the value of $\Psi_i$ (the same measure of diagonal dominance that appears in Corollary \ref{c:radcomp}), the smaller the sampling amount for the ($\epsilon$, $\delta$) estimator.

\section{Application: Monte Carlo estimators for a derivative-based global sensitivity metric}\label{s_appl}
We bound the absolute error (Theorem~\ref{t_66})
in a Monte Carlo estimator for global sensitivity analysis,
and more specifically for a derivative-based global sensitivity metric (DGSM)
of a function $f :\R^n\rightarrow \R$  whose partial derivatives are square integrable with respect to a probability density function
$\rho_{\matc{X}}(\V{x})$. The DGSM is equal to the diagonal $\D{\M{C}}$ 
of the matrix
\begin{align*}
\M{C} = \int_{\mathcal{X}} \nabla f(\V{x}) [\nabla f(\V{x})]^\top \rho_{\matc{X}} (\V{x}) d\V{x}.  
\end{align*}
The matrix $\M{C}$ is well-defined, symmetric positive semidefinite,
and can be interpreted as a second moment matrix of the gradient.
We compute the DGSM with the  Monte Carlo estimator
\begin{align}\label{e_6MC}
\widehat{\M{C}} \equiv \frac1N \sum_{k=1}^N \V{z}_k\V{z}_k^\top \qquad
 \text{where}\quad \V{z}_k \equiv\nabla f(\V{x}_k)
 \end{align}
 and $\V{x}_k$, $1\leq k\leq N$, are independent samples from the distribution 
of~$\rho_{\matc{X}}(\V{x})$. Below is a normwise bound
 for the error in the DGSM computed by the Monte Carlo estimator~(\ref{e_6MC}). Its
 derivation is related to the analysis in~\cite[Section 4]{lam2020multifidelity}.

\begin{theorem}\label{t_66}
Let $f :\R^n\rightarrow \R$ 
have square integrable partial derivatives with respect to the probability 
density function $\rho_{\matc{X}}(\V{x})$,
$\|\nabla f\|_\infty \leq \beta $ almost surely,
$\widehat{\M{C}}$ be the Monte Carlo estimator in (\ref{e_6MC}), and
\begin{align*}
c_{\max} &\equiv \|\D{\M{C}}\|_2
\qquad 
S_1 \equiv \|\D{\M{C}}
\left(\beta^2\M{I}-\D{\M{C}}\right)\|_2\\
S_2 &\equiv c_{\max}+\beta^2,\qquad 
d \equiv \frac{\sum_{i=1}^n c_{ii}( \beta^2  - c_{ii})}{S_1}. 
\end{align*}
If $c_{\max}> 0$ and $S_1 > 0$, then
\begin{align*}
\prob\left[\|\D{\M{C}} - \D{\widehat{\M{C}}}\|_2  \geq t \right] 
\leq 8d\exp \left(\frac{-t^2/2}{S_1 + S_2t/3}\right).
\end{align*}
\end{theorem}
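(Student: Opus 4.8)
The plan is to apply the matrix Bernstein inequality (Theorem~\ref{thm:bernstein2}), following the same template as the proof of Theorem~\ref{thm:rademacher3}. First I would introduce the diagonal random matrices $\M{S}_k \equiv \frac1N\left(\M{I}\circ(\V{z}_k\V{z}_k^\top) - \D{\M{C}}\right)$ for $1\leq k\leq N$, whose sum equals the quantity of interest, $\sum_{k=1}^N \M{S}_k = \D{\widehat{\M{C}}} - \D{\M{C}}$. Verifying the zero-mean hypothesis is immediate: since $\M{C}$ is by definition the expectation of $\V{z}_k\V{z}_k^\top = \nabla f(\V{x}_k)[\nabla f(\V{x}_k)]^\top$ under $\rho_{\matc{X}}$, we have $\expect[\V{z}_k\V{z}_k^\top] = \M{C}$, hence $\expect[\M{I}\circ(\V{z}_k\V{z}_k^\top)] = \D{\M{C}}$ and $\expect[\M{S}_k] = \M{0}$, so the sum has zero mean.

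For the boundedness hypothesis I would note that $\M{S}_k$ is diagonal with entries $\frac1N((\V{z}_k)_i^2 - c_{ii})$, so its two-norm is its largest-magnitude diagonal entry. The almost-sure bound $\|\nabla f\|_\infty \leq \beta$ forces $0 \leq (\V{z}_k)_i^2 \leq \beta^2$, and taking expectations gives $0 \leq c_{ii} \leq \beta^2$, hence $c_{\max} \leq \beta^2$. A triangle inequality then yields $|(\V{z}_k)_i^2 - c_{ii}| \leq \beta^2 + c_{\max} = S_2$, furnishing the uniform per-sample bound $L = S_2/N$.

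The main work, and the step I expect to be the crux, is the matrix-variance bound. Because $\M{S}_k$ is diagonal, $\expect[\M{S}_k^2]$ is diagonal with $i$th entry $\frac{1}{N^2}\expect[((\V{z}_k)_i^2 - c_{ii})^2] = \frac{1}{N^2}\Var[(\V{z}_k)_i^2]$. The key estimate is the fourth-moment bound $\expect[(\V{z}_k)_i^4] \leq \beta^2\,\expect[(\V{z}_k)_i^2] = \beta^2 c_{ii}$, obtained by multiplying the almost-sure inequality $(\V{z}_k)_i^2 \leq \beta^2$ by $(\V{z}_k)_i^2 \geq 0$ and taking expectations; it gives $\Var[(\V{z}_k)_i^2] \leq c_{ii}(\beta^2 - c_{ii})$. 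Summing over $k$ produces the diagonal positive semidefinite majorant $\M{V} = \frac1N\D{\M{C}}(\beta^2\M{I} - \D{\M{C}})$, whose entries $\frac1N c_{ii}(\beta^2 - c_{ii})$ are nonnegative since $0\leq c_{ii}\leq\beta^2$. From this I read off $\nu = \|\M{V}\|_2 = S_1/N$ (positive because $S_1>0$) and the intrinsic dimension $\intdim(\M{V}) = \trace(\M{V})/\|\M{V}\|_2 = \sum_{i=1}^n c_{ii}(\beta^2 - c_{ii})/S_1 = d$.

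Finally I would substitute $L$, $\nu$, and $d = \intdim(\M{V})$ into Theorem~\ref{thm:bernstein2}, using that $\M{Z} = \D{\widehat{\M{C}}} - \D{\M{C}}$ has zero mean, to obtain the claimed bound. The only genuinely nonroutine ingredient is recognizing that the uniform bound on $\nabla f$ controls both the per-sample operator norm and, through the fourth moment, the variance in a way that makes the majorant proportional to $\D{\M{C}}(\beta^2\M{I} - \D{\M{C}})$; this is precisely what ties the estimator's accuracy to the diagonal dominance of $\M{C}$ and supplies the intrinsic dimension $d$.
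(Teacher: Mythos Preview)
Your proposal is correct and follows essentially the same route as the paper: define the zero-mean diagonal summands $\M{S}_k$, bound $\|\M{S}_k\|_2$ via the triangle inequality using $\|\nabla f\|_\infty\leq\beta$, majorize the matrix variance by $\tfrac1N\D{\M{C}}(\beta^2\M{I}-\D{\M{C}})$, and invoke Theorem~\ref{thm:bernstein2}. Your entrywise fourth-moment bound $\expect[(\V{z}_k)_i^4]\leq\beta^2 c_{ii}$ is exactly the scalar form of the paper's matrix majorization $\expect[(\D{\V{z}_k\V{z}_k^\top})^2]\preceq\beta^2\D{\M{C}}$, so the two arguments coincide.
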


\begin{proof}
Before applying the matrix Bernstein inequality in 
Theorem~\ref{thm:bernstein2}, we need to verify the assumptions.
The Monte Carlo estimate $\D{\widehat{\M{C}}}$
is an unbiased estimator of the DGSM $\D{\M{C}}$ whose largest diagonal
element is
$$c_{\max} = \|\D{\M{C}}\|_2 = \max_{1 \leq i \leq n}{|c_{ii}|} =  \max_{1 \leq i \leq n}\expect\left[(\nabla     f(\matc{X}))_i^2\right] \leq \beta^2.$$ 
The absolute error in the DGSM computed by the Monte Carlo estimator
(\ref{e_6MC}) is
\[ \M{Z} = \D{\M{C}} - \D{\widehat{\M{C}}} = \sum_{k=1}^N \M{S}_k,\qquad
\text{where}\quad
\M{S}_k \equiv \frac1N\left (\D{\M{C}} - \D{\V{z}_k\V{z}_k^\top}\right).\] 
The summands $\M{S}_k$ have zero mean and are bounded by 
\begin{align*}
\|\M{S}_k\|_2 &\leq  
\frac1N \left(\|\D{\M{C}}\|_2 + \|\D{\V{z}_k\V{z}_k^\top}\|_2\right)\\
&\leq \frac{\|\D{\M{C}}\|_2+\beta^2}{N}  = \frac{c_{\max}+\beta^2}{N}=\frac{S_2}{N},
\qquad 1\leq k\leq N.
\end{align*}
We let $L = S_2/N$ so that $\|\M{S}_k\|_2 \leq L$. The variance is 
\begin{equation}\label{e_6v}
 \Vy[\M{Z}] = \sum_{k=1}^N \expect [\M{S}_k^2] = \frac{1}{N^2} \sum_{k=1}^N \expect \left[\left(\D{\M{C}} - \D{\V{z}_k\V{z}_k^\top}\right)^2\right]. 
 \end{equation}
Linearity of the expectation, 
the majorization $\D{\V{z}_k\V{z}_k^\top} \preceq \beta^2  \M{I}$,
$\expect [\D{\V{z}_k\V{z}_k^\top}] = \D{\M{C}}$ 
and commutativity of diagonal matrices imply for the summands,
\begin{align*} 
\expect\left[\left(\D{\M{C}} - \D{\V{z}_k\V{z}_k^\top}\right)^2\right] 
&= \expect\left[ \D{\M{C}}^2 + (\D{\V{z}_k\V{z}_k^\top})^2 - 2\D{\M{C}}\,\D{\V{z}_k\V{z}_k^\top} \right] \\
&=\expect\left[ (\D{\V{z}_k\V{z}_k^\top})^2 \right]-\D{\M{C}}^2\\
&\preceq   \beta^2 \D{\M{C}} - \D{\M{C}}^2, \qquad 1\leq k\leq N.
\end{align*}
Substitute the above into \eqref{e_6v}
  \begin{align*}
 \Vy[\M{Z}] \preceq \M{V} \equiv\frac1N 
 \left(\beta^2 \D{\M{C}} - \D{\M{C}}^2\right),
  \end{align*}
and apply Theorem~\ref{thm:bernstein2} 
with $\nu = \|\M{V}\|_2 = S_1/N$ and $d = \intdim(\M{V})$.
\end{proof}

Below is the minimal sampling amount that
makes the Monte Carlo estimator in~(\ref{e_6MC})
a normwise $(\epsilon,\delta)$ diagonal estimator.

\begin{corollary}\label{c_66}
Let $f :\R^n\rightarrow \R$ 
have square integrable partial derivatives with respect to the probability 
density function $\rho_{\matc{X}}(\V{x})$,
$\|\nabla f\|_\infty \leq \beta$ almost surely,
$\widehat{\M{C}}$ be the Monte Carlo estimator in (\ref{e_6MC}), and
\begin{align*}
c_{\max} &\equiv \|\D{\M{C}}\|_2, \qquad 
S_1 \equiv \|\D{\M{C}}\left(\beta^2 \M{I}-\D{\M{C}}\right)\|_2\\
S_2 &\equiv c_{\max}+\beta^2,\qquad 
d \equiv \frac{\sum_{i=1}^n c_{ii}}{S_1}. 
\end{align*}
Pick $\epsilon > 0$.
 If $c_{\max}> 0$ and $S_1 > 0$, then
for any $0 < \delta < 1$, if the
sampling amount is at least
\[ N \geq \frac{S_2}{3\epsilon^2 } \left(2\epsilon  + \frac{6S_1}{ c_{\max}\,S_2}\right)  \ln(8d/\delta),\]
then $\|\D{\M{C}} - \D{\widehat{\M{C}}}\|_2 \leq  \epsilon\|\D{\M{C}}\|_2$ holds with probability at least $1-\delta$.
\end{corollary}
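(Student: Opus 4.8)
The plan is to convert the tail bound of Theorem~\ref{t_66} into a sampling requirement, mirroring the passage from Theorem~\ref{thm:rademacher3} to Corollary~\ref{c:rademacher3}. The first step is to make the dependence on $N$ explicit: substituting $\nu = S_1/N$ and $L = S_2/N$ (from the proof of Theorem~\ref{t_66}) into the matrix Bernstein inequality of Theorem~\ref{thm:bernstein2} gives
\[
\prob\left[\|\D{\M{C}} - \D{\widehat{\M{C}}}\|_2 \geq t\right] \leq 8d\,\exp\left(\frac{-Nt^2}{2(S_1 + S_2 t/3)}\right).
\]
Setting the right-hand side equal to the failure probability $\delta$ and solving the resulting quadratic in $t$ yields the threshold
\[
t = \frac{S_2}{3N}\ln(8d/\delta) + \sqrt{\frac{S_2^2}{9N^2}\ln^2(8d/\delta) + \frac{2S_1}{N}\ln(8d/\delta)},
\]
so that with probability at least $1-\delta$ the absolute error satisfies $\|\D{\M{C}} - \D{\widehat{\M{C}}}\|_2 \leq t$.

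The second step is to turn this absolute bound into the relative bound required by Definition~\ref{def:epsdelest}, namely $t \leq \epsilon\,\|\D{\M{C}}\|_2 = \epsilon\, c_{\max}$. To that end I would introduce the dimensionless quantities
\[
\gamma \equiv \frac{S_2\,\ln(8d/\delta)}{3N\, c_{\max}}, \qquad \beta \equiv \frac{6 S_1}{S_2\, c_{\max}},
\]
chosen precisely so that dividing the threshold $t$ by $c_{\max}$ collapses it into the compact form $t/c_{\max} = \gamma + \sqrt{\gamma^2 + \beta\gamma}$. The relative accuracy condition then reads $\gamma + \sqrt{\gamma^2 + \beta\gamma} \leq \epsilon$.

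The delicate step, and the one I expect to be the main obstacle, is inverting this quadratic tail condition to extract a clean lower bound on $N$. Because all quantities are positive, one may isolate the square root and square without reversing the inequality, reducing $\gamma + \sqrt{\gamma^2 + \beta\gamma} \leq \epsilon$ to the single condition $\gamma \leq \epsilon^2/(2\epsilon + \beta)$; every step before and after this inversion is a direct substitution, so this is the only place where care with the direction of the inequalities is needed. Unwinding the definition of $\gamma$ then produces the stated lower bound on $N$. Since this is exactly the algebra performed in the proof of Corollary~\ref{c:rademacher3} with $K_1(s)$, $K_2(s)$, $\|\D{\M{A}}\|_2$, $d(s)$ replaced by $S_1$, $S_2$, $c_{\max}$, $d$, I would appeal to that computation rather than repeat it, noting only that $d$ enters solely through the logarithm $\ln(8d/\delta)$ and so its precise value affects only a lower-order constant.
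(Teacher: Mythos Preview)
Your proposal is correct and follows exactly the route the paper intends: the paper's own proof of Corollary~\ref{c_66} is the single sentence ``similar to Corollary~\ref{c:rademacher} but based on Theorem~\ref{t_66}'', and you have faithfully unpacked that by rerunning the $\gamma$–$\beta$ inversion from the proof of Corollary~\ref{c:rademacher3} with $(K_1,K_2,\|\D{\M{A}}\|_2)\mapsto(S_1,S_2,c_{\max})$. One cosmetic point: your auxiliary variable $\beta$ collides with the gradient bound $\beta$ already fixed in the statement, so rename it when you write the final version.
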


\begin{proof}
The proof is similar to that of \cref{c:rademacher} but is based instead on~\cref{t_66}. 
\end{proof}

\subsection{Illustrative examples}\label{s_6ex}
We determine the constants in Corollary~\ref{c_66}
for two different functions
$f :\mb{R}^n \rightarrow \mb{R}$,
and random variables
$\matc{X} \in \real^n$ 
from a uniform distribution over $\mathcal{X} = [-1,1]^n$.

\paragraph{Linear Function} Let $f(\V{x}) = \V{h}^\top\V{x}$ with $\V{h}\in\real^n$.
Then $\|\nabla f\|_\infty \leq \beta\equiv \|\V{h}\|_\infty$ almost surely. The second moment matrix $\M{C} = \V{hh}^\top$ has a largest diagonal entry $c_{\max} = \|\V{h}\|_\infty^2$. Let $\V{v} \in \R^n$ have entries $v_i =  h_i^2 \left(\|\V{h}\|_\infty^2 -  h_i^2\right)$ for $1 \leq i \leq n$. The  constants in Corollary~\ref{c_66} are
\begin{equation*}
S_1  =  \|\V{v}\|_\infty, \quad S_2 = 2\|\V{h}\|_\infty^2, \quad d = \frac{\sum_{i=1}^n v_i}{S_1}. 
\end{equation*}

\paragraph{Quadratic function} Let $f(\V{x}) = \frac12 \V{x}^\top\M{S}\V{x}$ where $\M{S}\in\real^{n\times n}$ 
$\M{S} = \M{S}^\top$
is a symmetric square root of the positive semidefinite matrix $\M{M}=\M{S}^2$. Then $\|\nabla f\|_\infty \leq 
\beta\equiv\|\M{S}\|_\infty$ almost surely. The second moment matrix $\M{C} = \frac13 \M{M}$ has a largest diagonal element $c_{\max} = \frac13\|\D{\M{M}}\|_{\infty}$. Let $\V{v} \in \R^n$ be have entries $v_i =  \frac{1}{3}m_{ii}\left( \|\M{S}\|_\infty^2 - \frac13 m_{ii}\right)$ for $1 \leq i \leq n$. The constants in Corollary~\ref{c_66} are
\begin{equation*}
S_1  =   \|\V{v}\|_\infty, 
\quad S_2  = \frac{1}{3}\,\|\D{\M{M}}\|_\infty + \|\M{S}\|_\infty^2, \quad d =   
\frac{\sum_{i=1}^n v_{i}}{S_1}.
\end{equation*}

\section{Numerical Experiments}\label{s_num}
After describing our test matrices (section~\ref{s_test}), we present 
four different types of
numerical experiments to illustrate
the accuracy of the Monte Carlo estimators:
Rademacher Monte Carlo estimators applied to 
the test matrices (section~\ref{s_exp1}), accuracy 
of different Monte Carlo estimators (section~\ref{e_exp2}), 
effect of the sparsity on the accuracy of Rademacher Monte Carlo  
estimators (section~\ref{s_exp3}), and
accuracy of the DGSM Monte Carlo estimator (section~\ref{s_exp4}).

\subsection{Test Matrices}\label{s_test}
We perform numerical experiments
on three symmetric test matrices 
from~\cite{roosta2015improved}
of dimension $n = 100$ that depend on a 
parameter $\theta$.  

\begin{enumerate}
    \item Identity plus rank-1
\begin{align*}
\M{A} = \M{I} + \theta \V{e}\V{e}^\top\qquad \text{where} \quad
.01\leq \theta \leq 0.1,
\end{align*}
where $\V{e}\in \R^n$ is a vector of ones. The constants in Corollary~\ref{c:rademacher} are
\[ K_1 = (n-1) \theta^2, \qquad K_2 = (n-1)\theta, \qquad \|\D{\M{A}}\|_\infty =  1+\theta,  \]
so that 
\[ \Delta_1 = \frac{K_1}{(1+\theta)^2}, \qquad \Delta_2 = \frac{(n-1)\theta}{1+\theta}, \qquad d = n. \]

\item Rank-1 with decaying elements 
    \[ \M{A} = \frac{\V{xx}^\top}{\|\V{x}\|_2^2} \qquad 
\text{where}\quad
    \V{x}_j = e^{-j(1-\theta)}, \quad 1 \leq j \leq n,
  \qquad 0.1\leq \theta\leq 1.\]
The constants in Corollary~\ref{c:rademacher} are
\begin{align*}
K_1 &= \left(\frac{x_1}{\|\V{x}\|_2}\right)^2\left( 1 - \left(\frac{x_1}{\|\V{x}\|_2}\right)^2 \right),\qquad
K_2 = \frac{x_1}{\|\V{x}\|_2^2} \sum_{j > 1} x_j
\end{align*}
and $\|\D{\M{A}}\|_\infty = \left(\frac{x_1}{\|\V{x}\|_2}\right)^2$so that
  \[\Delta_1 = \left(\frac{\|\V{x}\|_2}{x_1}\right)^2-1, \qquad \Delta_2 =\frac{ \sum_{j > 1} x_j}{x_1}, \qquad d = \frac{\sum_{i=1}^n x_i^2 \left(\|\V{x}\|_2^2 - x_i^2 \right)}{x_1^2 ( \|\V{x}\|_2^2 - x_1^2)}.  \]
  
\item Tridiagonal Toeplitz matrix 
   \[ \M{A} = \begin{bmatrix} 1 & \theta \\ \theta & 1 & \ddots  \\ &  \ddots & \ddots & \theta \\ & & \theta & 1\end{bmatrix}
\qquad\text{where}\quad 0.1\leq \theta\leq 1.\]
The constants in Corollary~\ref{c:rademacher} are
\begin{align*}
K_1 = 2\theta^2 , \qquad 
K_2 = 2\theta, \qquad 
\|\D{\M{A}}\|_\infty = 1
\end{align*}
so that
\[\Delta_1 = 2\theta^2 , \qquad \Delta_2 = 2\theta, \qquad d = \frac{2(n-1) \theta^2}{2\theta^2} = (n-1).  \]
\end{enumerate}
For all the test matrices, the constants $\Delta_1$ and $\Delta_2$ increase with increasing $\theta$ as the offdiagonal elements become larger in magnitude relative to the diagonal elements. Therefore, we expect the Rademacher Monte Carlo estimators to lose accuracy with increasing $\theta$, as measured by
 the normwise relative error (NRE) in the computed diagonal $\D{\widehat{\M{A}}}$,
\begin{equation*}
\mathrm{NRE} \equiv \frac{\|\D{\M{A}} - \D{\widehat{\M{A}}}\|_2}{\| \D{\M{A}}\|_2},
\end{equation*}
in Figures \ref{fig:testmat1}-\ref{fig:sparserad}.

\begin{figure}[!ht]
    \centering
    \includegraphics[scale=0.3]{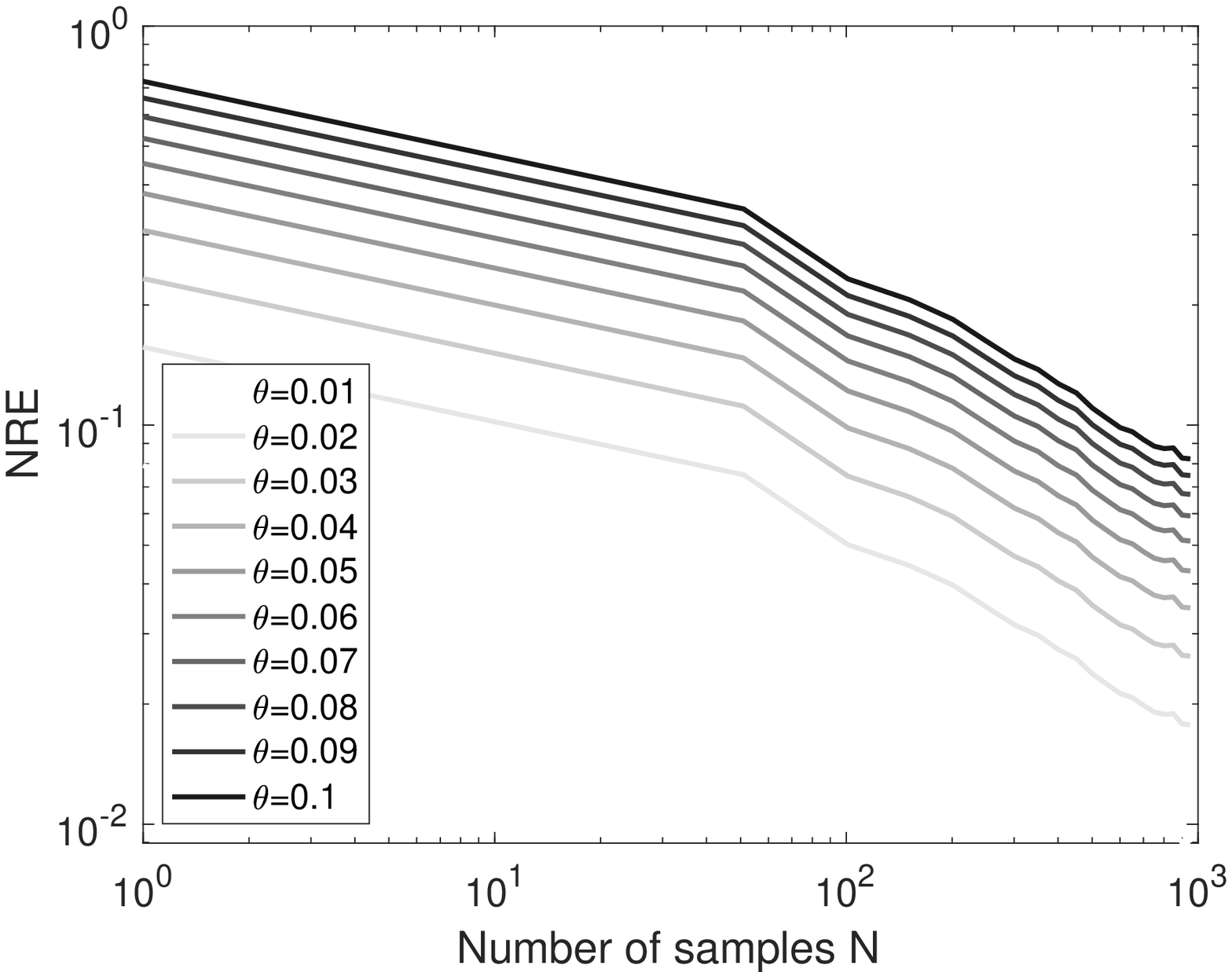}
    \includegraphics[scale=0.35]{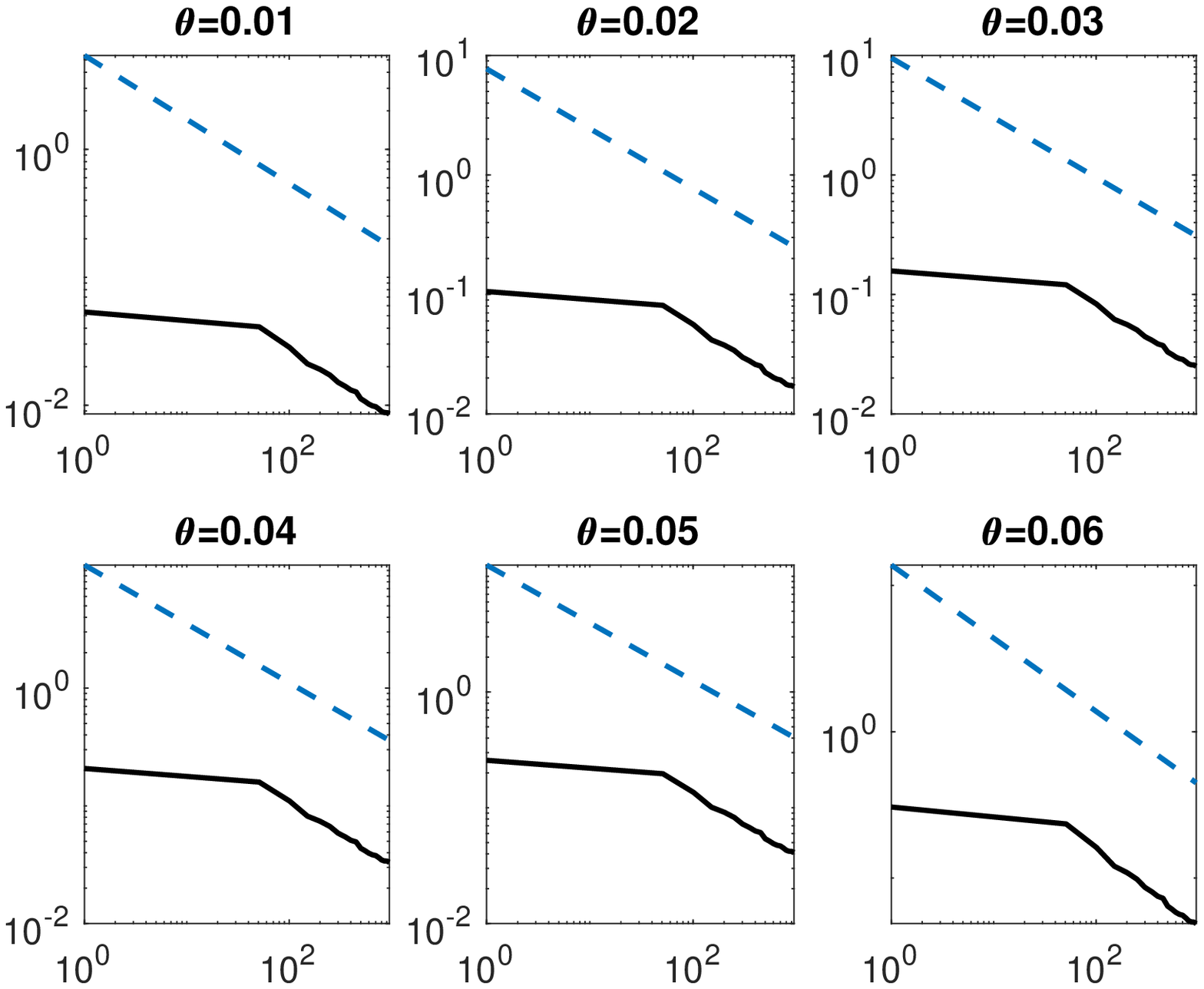}
        \caption{Rademacher Monte Carlo estimator applied to Test Matrix 1. 
        Big left panel: NRE for different values of $\theta$ versus
        sampling amount $N$. 
Small panels on the right: NRE (solid black line), and bound (\ref{eqn:c33})
(blue dotted line) versus sampling amount~$N$ with failure
probability $\delta=10^{-16}$.}
    \label{fig:testmat1}
\end{figure}

\subsection{Experiment 1: Accuracy of Rademacher Monte Carlo estimator on test matrices}\label{s_exp1}
Figures \ref{fig:testmat1}-\ref{fig:testmat3} show the NRE of the
Rademacher Monte Carlo estimator applied to the  test matrices in 
section~\ref{s_test}, and the bounds from the normwise
$(\epsilon,\delta)$ estimators in \cref{c:rademacher}.  

The big left panel
displays the NRE versus the sampling amount $N$. 
This NRE represents the average of the NREs over $10$ different independent runs.
The small panels on the right show the bound $\epsilon$ for the normwise
$(\epsilon,\delta)$ estimators from Corollary~\ref{c:rademacher} with failure probability $\delta = 10^{-16}$.

For Corollary~\ref{c:rademacher}, we solve for $\epsilon$ from the simpler bound
\begin{equation*} N \geq \frac{\Delta_2}{3\epsilon^2} \left( 2 + 6 \Delta_3\right) \ln (8d/\delta),
\qquad \Delta_3\equiv \frac{\Delta_1}{\Delta_2},
\end{equation*}
to obtain
\begin{equation}\label{eqn:c33}
\epsilon = \sqrt{\frac{\Delta_2}{3N} \left( 2 + 6 \Delta_3\right) \ln (8d/\delta)},\qquad 
\Delta_2=\frac{\|\M{A}-\D{\M{A}}\|_{\infty}}{\|\D{\M{A}}\|_{\infty}},\quad
\Delta_3=\frac{K_1}{\|\M{A}-\D{\M{A}}\|_{\infty}}.
\end{equation}

The big left panels illustrate that, for a fixed sampling amount $N$,
the NRE for Test Matrices 1 and 3 increases with $\theta$. This is 
because the offdiagonals become more dominant as $\theta$ becomes larger.

\begin{figure}[!ht]
    \centering
    \includegraphics[scale=0.3]{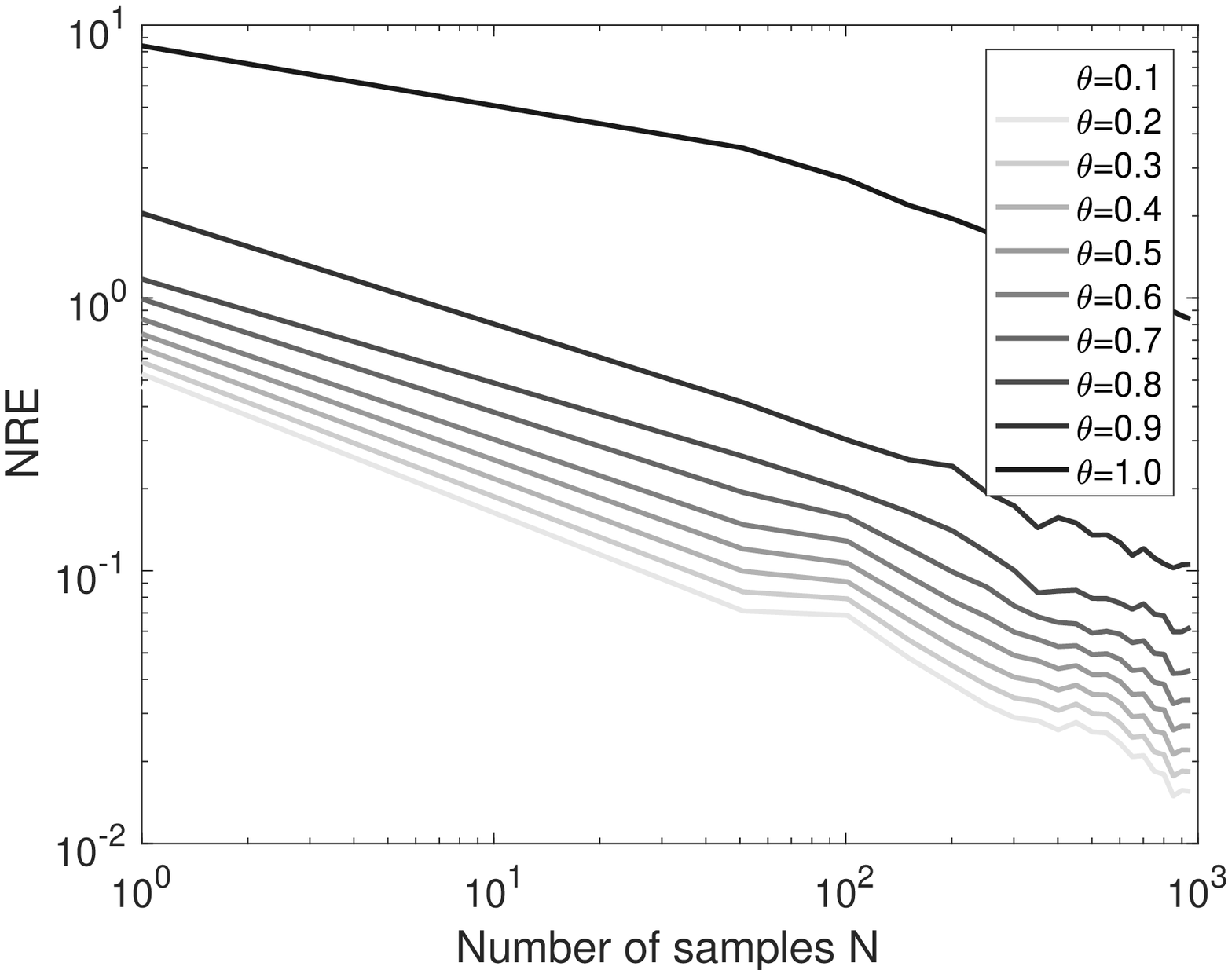}
    \includegraphics[scale=0.35]{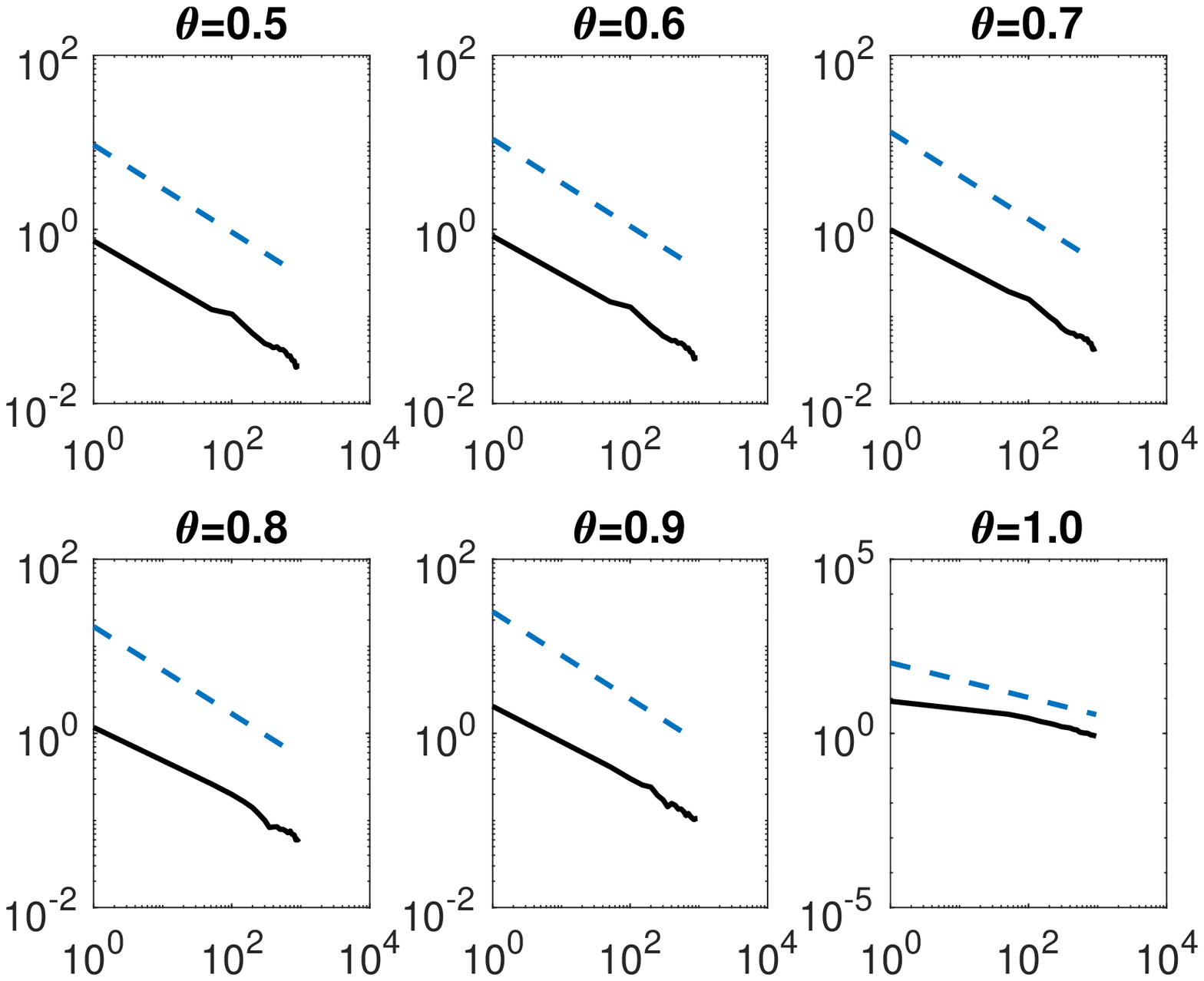}
 \caption{Rademacher Monte Carlo estimator applied to Test Matrix 2. 
        Big left panel: NRE for different values of $\theta$ versus 
        sampling amount $N$. 
        Small panels on the right: NRE (solid black line) and
bound (\ref{eqn:c33}) (blue dotted line)  versus sampling amount~$N$ with failure
probability $\delta=10^{-16}$.}
\label{fig:testmat2}
\end{figure}

\begin{figure}[!ht]
    \centering
    \includegraphics[scale=0.3]{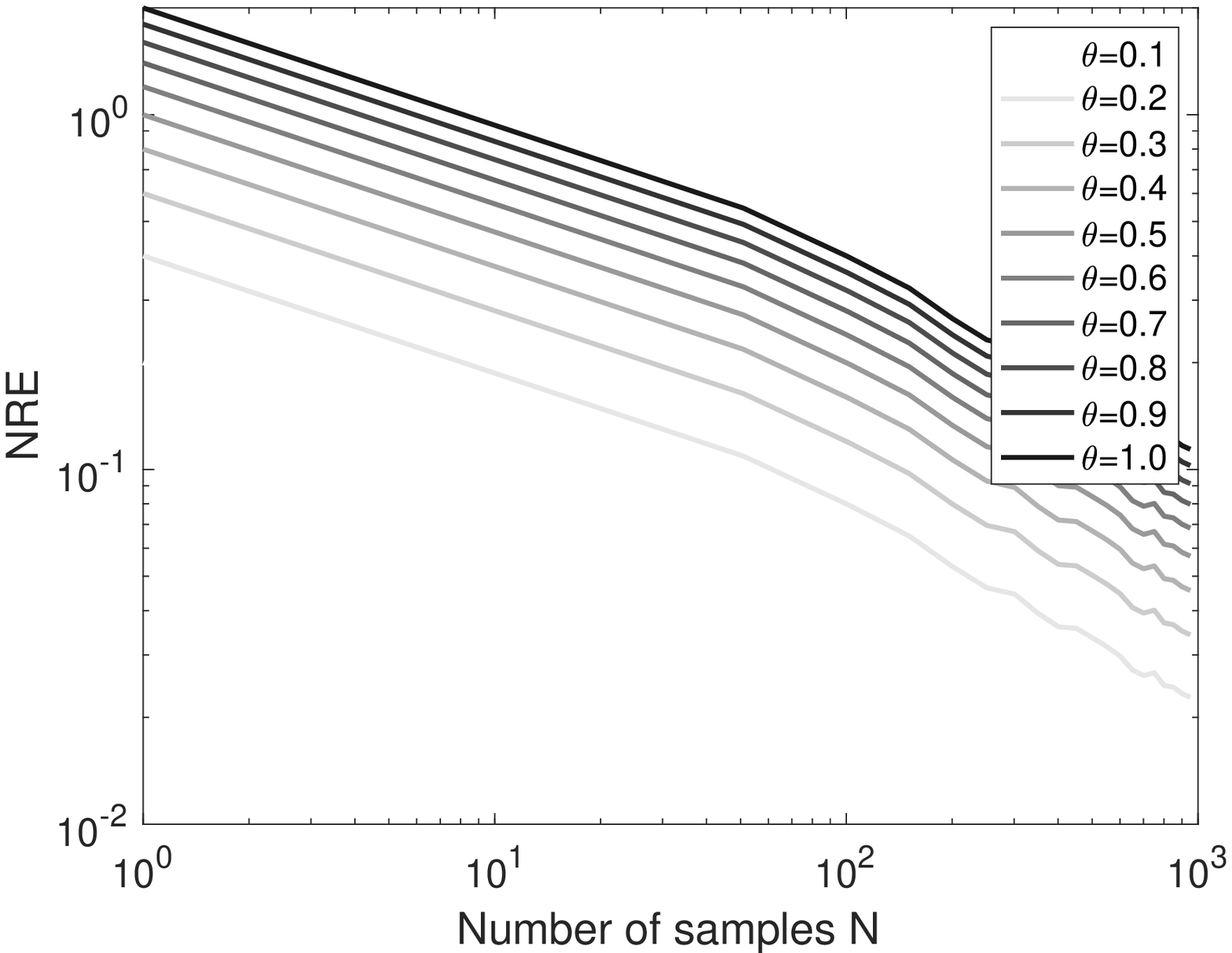}
    \includegraphics[scale=0.35]{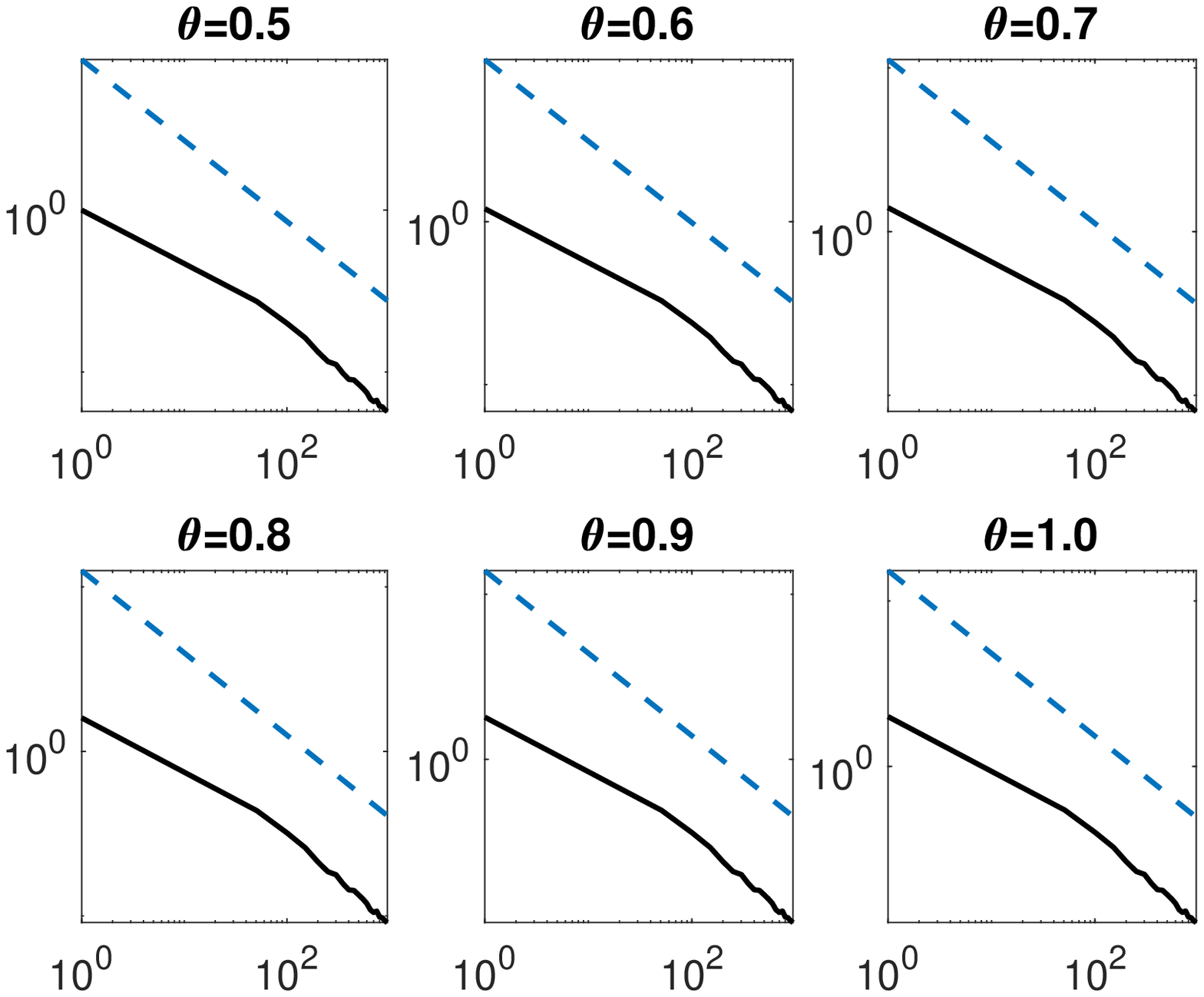}
 \caption{Rademacher Monte Carlo estimator applied to Test Matrix 3. 
        Big left panel: NRE for different values of $\theta$ versus
        sampling amount $N$. 
Small panels on the right: NRE (solid black line) and
bound (\ref{eqn:c33}) (blue dotted line)  versus sampling amount~$N$ with failure
probability $\delta=10^{-16}$.}
 \label{fig:testmat3}
\end{figure}

\subsection{Experiment 2: Different Monte Carlo estimators}\label{e_exp2}
We compare the accuracy of the following Monte Carlo estimators on 
Test Matrix 1 with $\theta = 0.01$:
Rademacher, Gaussian, sparse Rademacher with $s=3$, and normalized Gaussian.

For each estimator,
Figure~\ref{fig:compmethods} shows the mean of the NRE and variance over 100 runs,
with the shaded regions representing the $2.5\%$ and $97.5\%$ quantiles.

The normalized Gaussian estimator is about as
accurate as the Rademacher estimator, while the sparse Rademacher with 
$s=3$ is about as accurate as the Gaussian estimator. The
Gaussian and sparse Rademacher estimators are less accurate
than the Rademacher and normalized Gaussian estimators.
The shaded regions illustrate that, as expected,
the sample variance of all estimators decreases with increasing sampling 
amount $N$.

\begin{figure}[!ht]
    \centering
    \includegraphics[scale=0.3]{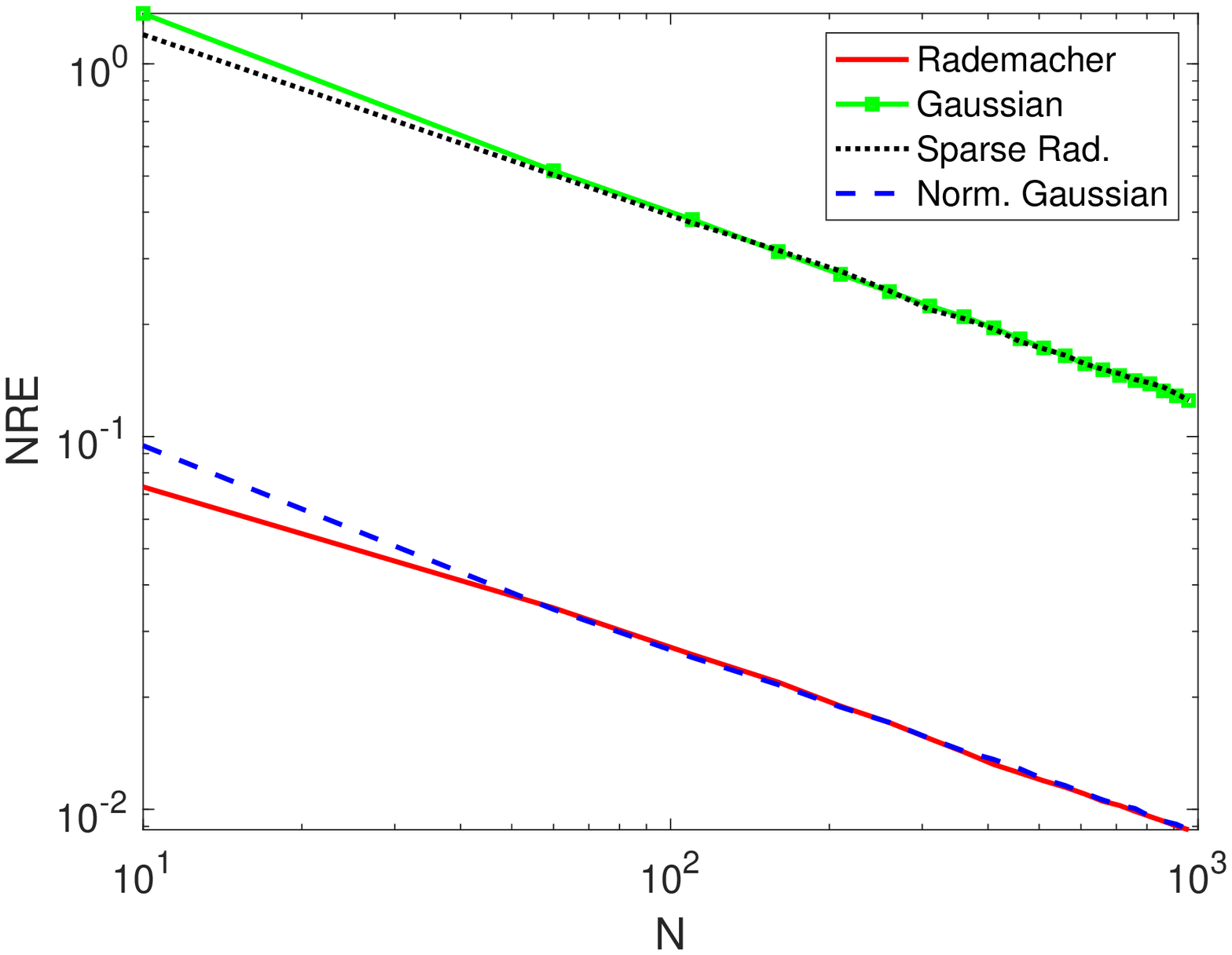}
    \includegraphics[scale=0.35]{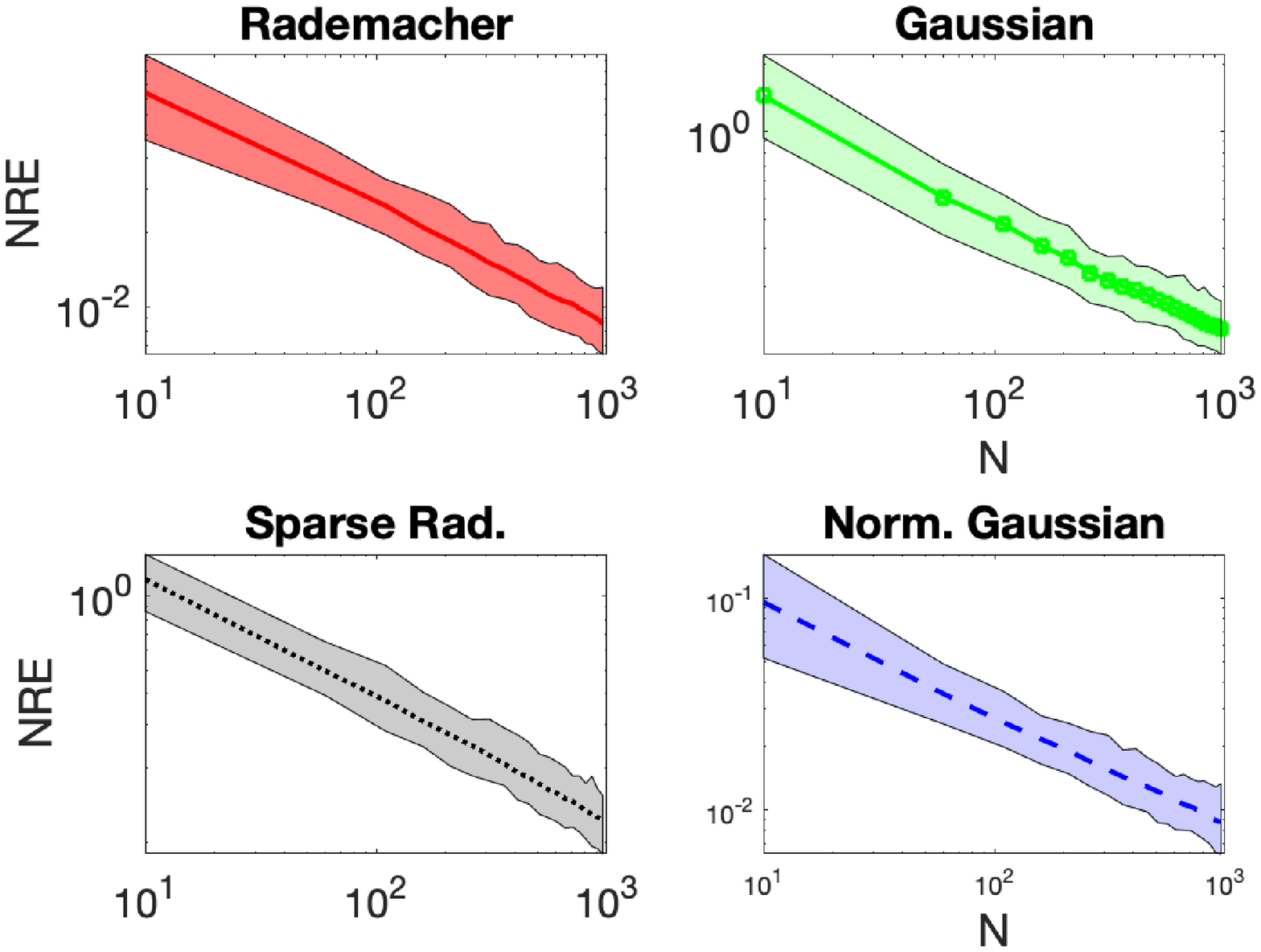}
    \caption{Rademacher, Gaussian, sparse Rademacher with $s=3$, and Normalized Gaussian Monte Carlo estimators applied to Test Matrix 1 with $\theta = 0.01$. Big left panel: NRE mean versus
        sampling amount $N$ for different estimators. Small right panels: 
NRE mean (styled lines), and $2.5\%$ and $97.5\%$ quantiles (shaded regions)
versus sampling amount $N$.}
    \label{fig:compmethods}
\end{figure}

\subsection{Experiment 3: Effect of sparsity in Rademacher vectors}\label{s_exp3}
We apply the Rademacher Monte Carlo estimator to Test Matrix 1 with $\theta=.01$
with four different sparsity levels: 
$s=1$ (standard Rademacher), $s=3$ \cite{achlioptas2003database}, $s=10$, and $s=50$.

For each sampling amount $N$,
Figure~\ref{fig:sparserad} shows the mean and the variance of the NRE
over 100 runs. It suggests that
sparse Rademacher estimators ($s>1$) may not be able to achieve a single
digit of accuracy, unless the sampling amount is so large as to exceed the matrix dimension.

\begin{figure}[!ht]
    \centering
    \includegraphics[scale=0.4]{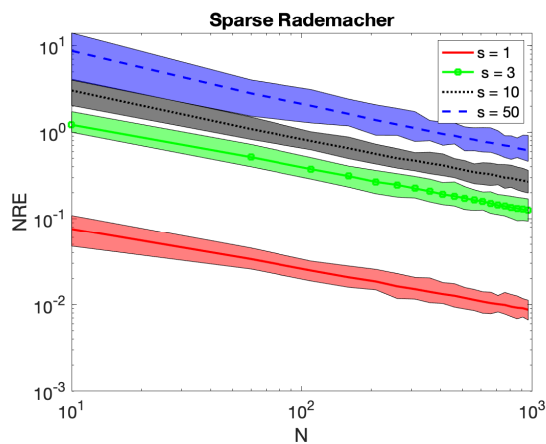}
    \caption{Sparse Rademacher Monte Carlo estimators with sparsity levels 
    $s=1, 3, 10, 50$ applied to Test Matrix 1 with $\theta = 0.01$. 
    NRE (dotted lines) and $2.5\%$ and $97.5\%$ quantiles (shaded regions).}
    \label{fig:sparserad}
\end{figure}

\subsection{Example 4: Bounds for DGSM Monte Carlo estimator}\label{s_exp4} We apply the DGSM
Monte Carlo estimator (\ref{e_6MC}) to the diagonal matrix
\begin{equation}\label{e_6diag}
\M{S}\equiv\diag(\V{s}) \in \R^{n\times n}\qquad
s_j \equiv  \exp(-10j/n), \qquad 1 \leq j \leq n
\end{equation}
from the quadratic function in Section~\ref{s_6ex} for $n=100$, and 
illustrate the accuracy of Corollary~\ref{c_66}.

The left panel of Figure~\ref{fig:dgsmquad} shows the normwise relative error 
\begin{equation*}
\mathrm{NRE}\equiv\frac{\|\D{\M{C}} - \D{\M{\widehat{C}}}\|_2}{\|\D{\M{C}}\|_2} 
\end{equation*}
which represents the the average of the NREs over $100$ independent runs.

For Corollary~\ref{c_66}, we fix the sample size $N$ and solve for $\epsilon$ from the simpler bound
\[ N \geq \frac{S_2}{3\epsilon^2} \left( 2 + 6S_3\right) \ln (8d/\delta),
\qquad S_3\equiv \frac{S_1}{c_{\max}S_2},
\]
to obtain
\begin{equation}\label{eqn:cex4}
\epsilon = \sqrt{\frac{S_2}{3N} \left( 2 + 6 S_3\right) \ln (8d/\delta)}.
\end{equation}
The expressions for $S_1,S_2,c_{\max}$ and $d$ for this example have been derived in subsection~\ref{s_6ex}.

\begin{figure}[!ht]
    \centering
    \includegraphics[scale=0.5]{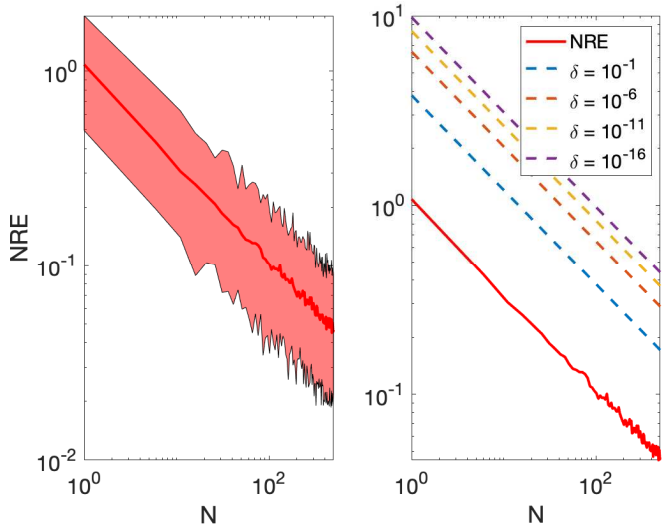}
    \caption{DGSM Monte Carlo estimator (\ref{e_6MC}) applied to $100\times 100$ matrix $\M{S}$ in (\ref{e_6diag}). Left panel: NRE mean (solid line) and  $2.5\%$ and $97.5\%$ quantiles (shaded regions)
    versus sampling amount $N$. Right panel: NRE and bounds
(\ref{eqn:cex4}) for different failure probabilities $\delta$    
    versus sampling amount.}
    \label{fig:dgsmquad}
\end{figure}

The right panel of Figure~\ref{fig:dgsmquad} illustrates that
with less stringent failure probabilities $\delta$, the relative
bounds (\ref{eqn:cex4}) move closer to the NRE.

\subsection{Example 5: DGSM on the Circuit model}
We apply the Monte Carlo DGSM estimator (\ref{e_6MC}) to 
the so-called \textit{circuit model}
from~\cite{constantine2017global}. The  quantity of interest being modeled is the midpoint voltage of a  transformerless push-pull circuit, which depends on $n=6$ parameters through a nonlinear closed-form algebraic expression.

As in~\cite{constantine2017global}, we normalize the parameter space to $\mathcal{X} = [-1, 1]^n$ and scale the partial derivatives 
appropriately\footnote{MATLAB codes are available in \url{https://bitbucket.org/paulcon/global-sensitivity-metrics-from-active-subspaces/src/master/}.}.

\begin{figure}[!ht]
    \centering
    \includegraphics[scale=0.4]{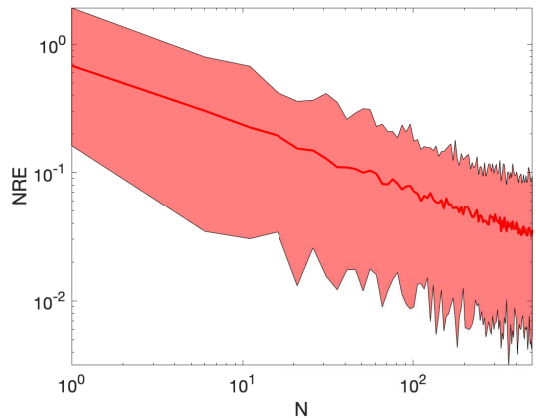}
    \caption{DGSM Monte Carlo estimator (\ref{e_6MC}) applied to the Circuit model. NRE mean (solid line) and  $2.5\%$ and $97.5\%$ quantiles (shaded regions) versus sampling amount $N$.}
    \label{fig:my_label}
\end{figure}

Figure~\ref{fig:my_label} shows the mean 
of the NRE and variances over $100$
independent runs per sampling amount $N$,
with the shaded regions representing  $2.5\%$ and $97.5\%$ quantiles. Since the exact expressions for the DGSMs are unavailable, we use as the exact value 
a tensor product Gauss-Legendre quadrature-based approximation with $15$ points per dimension (i.e., $15^n$ total points).

\section{Conclusion and future work}
This paper derives probabilistic bounds for the Monte Carlo diagonal estimators; the bounds come in two flavors, normwise and componentwise bounds for the absolute and relative errors of the Monte Carlo estimators. There are several avenues for future work. First, it would be interesting to study the accuracy of the diagonal estimator for matrix functions when a polynomial, or rational approximation to the matrix function is used. Second, we are currently pursuing the extension of the analysis of the Monte Carlo diagonal estimators to estimators for the  selected entries (possibly including offdiagonal entries) of a matrix. 

\section{Acknowledgements}
The authors would like to acknowledge support from the National Science Foundation through the grants DMS-1745654 (all three authors) and DMS-1845406 (A.K.S.). We would like to acknowledge Alen Alexanderian for helpful discussions.

\bibliography{refs}

\begin{thebibliography}{10}

\bibitem{achlioptas2003database}
D.~Achlioptas.
\newblock Database-friendly random projections: {J}ohnson-{L}indenstrauss with
  binary coins.
\newblock {\em J. Comput. System Sci.}, 66(4):671--687, 2003.

\bibitem{avron2011randomized}
H.~Avron and S.~Toledo.
\newblock Randomized algorithms for estimating the trace of an implicit
  symmetric positive semi-definite matrix.
\newblock {\em J. ACM}, 58(2):1--34, 2011.

\bibitem{BN22}
R.~A. Baston and Y.~Nakatsukasa.
\newblock Stochastic diagonal estimation: probabilistic bounds and an improved
  algorithm, 2022.
\newblock arXiv:2201.10684.

\bibitem{bekas2007estimator}
C.~Bekas, E.~Kokiopoulou, and Y.~Saad.
\newblock An estimator for the diagonal of a matrix.
\newblock {\em Appl. Numer. Math.}, 57(11-12):1214--1229, 2007.

\bibitem{chen2012masked}
R.~Y. Chen, A.~Gittens, and J.~A. Tropp.
\newblock The masked sample covariance estimator: an analysis using matrix
  concentration inequalities.
\newblock {\em Inf. Inference}, 1(1):2--20, 2012.

\bibitem{constantine2017global}
P.~G. Constantine and P.~Diaz.
\newblock Global sensitivity metrics from active subspaces.
\newblock {\em Reliab. Eng. Syst. Safe.}, 162:1--13, 2017.

\bibitem{cortinovis2021randomized}
A.~Cortinovis and D.~Kressner.
\newblock On randomized trace estimates for indefinite matrices with an
  application to determinants.
\newblock {\em Found. Comput. Math.}, pages 1--29, 2021.

\bibitem{hutchinson1989stochastic}
M.~F. Hutchinson.
\newblock A stochastic estimator of the trace of the influence matrix for
  {L}aplacian smoothing splines.
\newblock {\em Comm. Statist. Simulation Comput.}, 18(3):1059--1076, 1989.

\bibitem{kaperick2019diagonal}
B.~J. Kaperick.
\newblock Diagonal estimation with probing methods.
\newblock Master's thesis, Virginia Polytechnic Institute and State University,
  2019.

\bibitem{kucherenko2017}
S.~Kucherenko and B.~Iooss.
\newblock {\em Derivative-Based Global Sensitivity Measures}, pages 1241--1263.
\newblock Springer International Publishing, Cham, 2017.

\bibitem{laeuchli2016methods}
J.~H. Laeuchli.
\newblock {\em Methods for Estimating The Diagonal of Matrix Functions}.
\newblock PhD thesis, College of William and Mary, 2016.

\bibitem{lam2020multifidelity}
R.~R. Lam, O.~Zahm, Y.~M. Marzouk, and K.~E. Willcox.
\newblock Multifidelity dimension reduction via active subspaces.
\newblock {\em SIAM J. Sci. Comput.}, 42(2):A929--A956, 2020.

\bibitem{Larsen2006}
J.~L. Larsen and L.~M. Morris.
\newblock {\em An Introduction to Mathematical Statistics and its
  Applications}.
\newblock Pearson Prentice Hall, fourth edition, 2006.

\bibitem{LHC06}
P.~Li, T.~J. Hastie, and K.~W. Church.
\newblock Very sparse random projections.
\newblock In {\em Proceedings of the 12th ACM SIGKDD International Conference
  on Knowledge Discovery and Data Mining}, KDD '06, page 287–296, New York,
  NY, USA, 2006. Association for Computing Machinery.

\bibitem{Mitz2005}
M.~Mitzenmacher and E.~Upfal.
\newblock {\em Probability and computing}.
\newblock Cambridge University Press, Cambridge, 2005.
\newblock Randomized algorithms and probabilistic analysis.

\bibitem{roosta2015improved}
F.~Roosta-Khorasani and U.~Ascher.
\newblock Improved bounds on sample size for implicit matrix trace estimators.
\newblock {\em Found. Comput. Math.}, 15(5):1187--1212, 2015.

\bibitem{soms1976asymptotic}
A.~P. Soms.
\newblock An asymptotic expansion for the tail area of the t-distribution.
\newblock {\em J. Amer. Statist. Assoc.}, 71(355):728--730, 1976.

\bibitem{tropp2015introduction}
J.~A. Tropp.
\newblock An introduction to matrix concentration inequalities.
\newblock {\em Found. Trends Mach. Learning}, 8(1--2):1--230, 2015.

\bibitem{ubaru2018applications}
S.~Ubaru and Y.~Saad.
\newblock Applications of trace estimation techniques.
\newblock In T.~Kozubek, M.~{\v{C}}erm{\'a}k, P.~Tich{\'y}, R.~Blaheta,
  J.~{\v{S}}{\'i}stek, D.~Luk{\'a}{\v{s}}, and J.~Jaro{\v{s}}, editors, {\em
  High Performance Computing in Science and Engineering}, pages 19--33, Cham,
  2018. Springer International Publishing.

\bibitem{vershynin2018high}
R.~Vershynin.
\newblock {\em High-dimensional probability: {A}n introduction with
  applications in data science}, volume~47.
\newblock Cambridge University Press, 2018.

\bibitem{wathen2015preconditioning}
A.~J. Wathen.
\newblock Preconditioning.
\newblock {\em Acta Numer.}, 24:329--376, 2015.

\bibitem{yao2020adahessian}
Z.~Yao, A.~Gholami, S.~Shen, M.~Mustafa, K.~Keutzer, and M.~W. Mahoney.
\newblock {ADAHESSIAN}: {A}n adaptive second order optimizer for machine
  learning.
\newblock {\em arXiv preprint arXiv:2006.00719}, 2020.

\end{thebibliography}
\bibliographystyle{abbrv}
\end{document}